\definecolor{refkey}{rgb}{0,0,1}
\definecolor{labelkey}{rgb}{1,0,0}
\newenvironment{claim}[1][{\textup{(\theequation)}}]{\refstepcounter{equation}\vglue10pt
\begin{trivlist}
\item[{\hskip\labelsep#1}]}{\vglue10pt\end{trivlist}}
\theoremstyle{plain}
\newtheorem{theorem}{Theorem}[chapter]
\newtheorem{proposition}[theorem]{Proposition}
\newtheorem{corollary}[theorem]{Corollary}
\theoremstyle{definition}
\newtheorem{definition}[theorem]{Definition}
\theoremstyle{remark}
\newtheorem{remark}[theorem]{Remark}
\newtheorem{conjecture}[theorem]{Conjecture}
\newtheoremstyle{plainfoot}%
  {\item[\hskip\labelsep \theorem@headerfont ##1\ ##2\,\footnotemark\theorem@separator]}%
  {\item[\hskip\labelsep \theorem@headerfont ##1\ ##2\ (##3)\, \footnotemark\theorem@separator]}
\theoremstyle{plainfoot}
\newtheorem{theorem-foot}[theorem]{Theorem}
\newtheorem{lemma-foot}[theorem]{Lemma}
\newtheorem{proposition-foot}[theorem]{Proposition}
\newtheorem{corollary-foot}[theorem]{Corollary}
\newtheorem{conjecture-foot}[theorem]{Conjecture}
\newtheorem{condition-foot}[theorem]{Condition}
\theoremstyle{plainfoot}
\newtheorem{definition-foot}[theorem]{Definition}
\newtheorem{Problem-foot}[theorem]{Problem}
\theoremstyle{plainfoot}
\newtheorem{remark-foot}[theorem]{Remark}         % remark with no number
\newtheorem{example-foot}[theorem]{Example}
\newtheorem{problem-foot}[theorem]{Problem}
\numberwithin{equation}{chapter}
\newenvironment{phantomequation}[1][]{\refstepcounter{equation}}{}
\newcounter{note}
\newcommand{\loc}{\mathsf{loc}}
\newcommand{\inn}{\mathsf{inn}}
\newcommand{\out}{\mathsf{out}}
\newcommand{\sym}{\mathsf{sym}}
\newcommand{\asym}{\mathsf{asym}}
\newcommand{\bR}{\mathbb{R}}
\newcommand{\cI}{\mathcal{I}}
\newcommand{\cJ}{\mathcal{J}}
\newcommand{\cY}{\mathcal{Y}}
\newcommand{\cX}{\mathcal{X}}
\newcommand{\cK}{\mathcal{K}}
\newcommand\sC{\mathscr{C}}
\newcommand\sL{\mathscr{L}}
\newcommand\sH{\mathscr{H}}
\newcommand{\dist}{\operatorname{dist}}
\newcommand{\mes}{\operatorname{mes}}
\newcommand\supp{\operatorname{supp}}
\newcommand{\Ker}{\operatorname{Ker}}
\newcommand{\Ran}{\operatorname{Ran}}
\newcommand{\Spec}{\operatorname{Spec}}
\newcommand{\N}{\mathsf{N}}
\newcommand{\D}{\mathsf{D}}
\newcommand{\w}{\mathsf{w}}
\newcommand{\T}{\mathsf{T}}
\newcommand{\W}{\mathsf{W}}
\renewcommand{\Re}{\operatorname{Re}}
\title{Spectral asymptotics for Dirichlet to Neumann operator in the domains with edges\thanks{\emph{2010 Mathematics Subject Classification}: 35P20, 58J50.}\thanks{\emph{Key words and phrases}: Dirichlet-to-Neumann operator, spectral asymptotics.}
}
\author{Victor Ivrii\thanks{This research was supported in part by National Science and Engineering  Research Council (Canada) Discovery Grant  RGPIN 13827}}
\begin{document}

\maketitle

\begin{abstract}
We consider eigenvalues of the Dirichlet-to-Neumann operator for Laplacian in the domain (or manifold) with edges and establish the asymptotics of the eigenvalue counting function 
\begin{equation*}
\N(\lambda)= \kappa_0\lambda^d  +O(\lambda^{d-1})\qquad \text{as\ \ } \lambda\to+\infty,
\end{equation*}
where $d$ is dimension of the boundary. Further, in certain cases we establish two-term asymptotics 
\begin{equation*}
\N(\lambda)= \kappa_0\lambda^d  +\kappa_1\lambda^{d-1}+o(\lambda^{d-1})\qquad \text{as\ \ } \lambda\to+\infty.
\end{equation*}
We also establish improved asymptotics for Riesz means.
\end{abstract}

\enlargethispage{2.5\baselineskip}

\chapter{Introduction}
\label{sect-1}

Let $X$ be a compact connected $(d+1)$-dimensional Riemannian manifold with the boundary $Y$, regular enough to properly define operators $J$ and $\Lambda$ below\footnote{\label{foot-1} Manifolds with edges are of this type.}.
Consider \emph{Steklov problem\/}
\begin{align}
&\Delta w=0 &&\text{in}\ \ X,\label{eqn-1-1}\\
&(\partial_\nu +\lambda) w|_Y  =0, \label{eqn-1-2}
\end{align}
where $\Delta$ is the positive Laplace-Beltrami operator\footnote{\label{foot-2} Defined via quadratic forms.}, acting on functions on $X$, and $\nu$ is the unit inner normal to  $Y$. In the other words, we consider eigenvalues of the Dirichlet-to-Neumann operator. For $v$, which is a restriction to $Y$ of $\sC^2$ function, we define  $Jv= w$, where 
$\Delta w=0$ in $X$, $w|_Y=v$,  \underline{and} $\Lambda v=-\partial_\nu Jv|_Y$.  

\begin{definition}\label{def-1-1}
$\Lambda$ is called \emph{Dirichlet-to-Neumann operator\/}.
\end{definition}

The purpose of this paper is to consider manifold with the boundary  which has edges: i.e. each point $y\in Y$ has a neighbourhood $U$ in $\bar{X}\coloneqq X\cap Y$, which is the diffeomorphic either to $\bR^+ \times \bR^d$ (then $y$ is a \emph{regular point\/}), or to $\bR^{+\,2}  \times \bR^{d-1}$ (then $y$ is an \emph{inner edge point\/}) or to $(\bR^2 \setminus \bR^{-\,2})  \times \bR^{d-1}$ (then $y$ is an \emph{outer edge point\/}). Let $Z_\inn$ and $Z_\out$ be sets of the inner and outer edge points respectively, and $Z=Z_\inn\cup Z_\out$.

One can prove easily the following proposition:

\begin{proposition}\label{prop-1-2}
\begin{enumerate}[label=(\roman*), wide, labelindent=0pt]
\item\label{propo-1-2-i}
$\Lambda$ is a non-negative essentially self-adjoint operator in $\sL^2(Y)$; $\Ker(\Lambda)$ consists of constant functions.
\item\label{propo-1-2-ii}
$\Lambda$ has a discrete accumulating to infinity spectrum with eigenvalues $0=\lambda_0<\lambda_1\le \ldots $ could be obtained recurrently from the following variational problem:
\begin{multline}
\int _X |\nabla w|^2\,dx\mapsto \min (=\lambda_n)\\
\text{as\ \ } \int_Y |w|^2\,dx'=1, \qquad \int_Y w w^\dag_k\,dx'=0\quad\text{for\ \ } k=0,\ldots, n-1.
\label{eqn-1-3}
\end{multline}
\end{enumerate}
\end{proposition}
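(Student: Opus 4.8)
\medskip
\noindent\textit{Sketch of the argument.}
The plan is to realize $\Lambda$ as the self-adjoint operator generated by a closed quadratic form on $\sL^2(Y)$ and to read off every assertion from standard form theory. First I would record the Sobolev background on $X$: both model neighbourhoods at an edge point — the quadrant $\bR^{+\,2}\times\bR^{d-1}$ and its reflex counterpart $(\bR^2\setminus\bR^{-\,2})\times\bR^{d-1}$ — are Lipschitz epigraphs, so $X$ is a compact manifold with Lipschitz boundary; hence the trace map $H^1(X)\to H^{1/2}(Y)$ is bounded, the harmonic extension $J$ — defined for $v\in H^{1/2}(Y)$ as the minimizer of $\int_X|\nabla w|^2\,dx$ over $\{w\in H^1(X):w|_Y=v\}$, equivalently the variational solution of \eqref{eqn-1-1} with $w|_Y=v$ — is bounded $H^{1/2}(Y)\to H^1(X)$, and the Poincaré-type inequality
\[
\|u\|_{\sL^2(X)}^2\le C\bigl(\|\nabla u\|_{\sL^2(X)}^2+\|u|_Y\|_{\sL^2(Y)}^2\bigr),\qquad u\in H^1(X),
\]
holds. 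Then I set $a(v)\coloneqq\int_X|\nabla Jv|^2\,dx$ on the form domain $\mathcal D[a]=H^{1/2}(Y)$; it is densely defined, symmetric, non-negative and \emph{closed} — if $v_n\to v$ in $\sL^2(Y)$ with $a(v_n-v_m)\to0$, then by the Poincaré inequality $Jv_n$ is Cauchy in $H^1(X)$, its limit is harmonic with trace $v$, hence equals $Jv$, so $v\in\mathcal D[a]$ and $a(v_n-v)\to0$.

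By the first representation theorem $a$ generates a unique non-negative self-adjoint operator, and Green's formula identifies it with $\Lambda$: for $v,v'$ that are traces of $\sC^2$ functions,
\[
a(v,v')=\int_X\langle\nabla Jv,\overline{\nabla Jv'}\rangle\,dx=-\int_Y(\partial_\nu Jv)\,\overline{v'}\,dx'=\langle\Lambda v,v'\rangle_{\sL^2(Y)},
\]
the boundary term being read in the weak sense dual to the trace near the edge set $Z$ (where $Jv$ loses pointwise regularity up to $Y$); such $v$ are dense in $H^{1/2}(Y)$ and $J$ is continuous, so they form a core and $\Lambda$ is essentially self-adjoint on them. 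The kernel is immediate: $\Lambda v=0\iff a(v)=0\iff\nabla Jv\equiv0\iff Jv$ is constant (as $X$ is connected) $\iff v$ is constant, while constants are harmonic; hence $\Ker\Lambda$ is exactly the constant functions. This establishes \ref{propo-1-2-i}.

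For \ref{propo-1-2-ii} I would equip $\mathcal D[a]$ with the form norm $\|v\|_a^2=a(v)+\|v\|_{\sL^2(Y)}^2$ and show that $(\mathcal D[a],\|\cdot\|_a)$ embeds compactly into $\sL^2(Y)$: on a $\|\cdot\|_a$-bounded sequence the Poincaré inequality bounds $Jv_n$ in $H^1(X)$, and the trace map $H^1(X)\to\sL^2(Y)$ is compact — being bounded into $H^{1/2}(Y)$ followed by the compact Rellich embedding $H^{1/2}(Y)\hookrightarrow\sL^2(Y)$ — so $v_n=(Jv_n)|_Y$ has an $\sL^2(Y)$-convergent subsequence. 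Hence $\Lambda$ has compact resolvent, $\Spec\Lambda$ is a sequence $0=\lambda_0\le\lambda_1\le\cdots\to+\infty$ of finite-multiplicity eigenvalues, and $\dim\Ker\Lambda=1$ forces $\lambda_0=0<\lambda_1$. Finally, the min–max principle for the operator attached to the closed form $a$ gives $\lambda_n=\min\{a(v):v\in\mathcal D[a],\ \|v\|_{\sL^2(Y)}=1,\ \langle v,w^\dag_k\rangle_{\sL^2(Y)}=0,\ k=0,\dots,n-1\}$ with $w^\dag_k$ the $k$-th eigenfunction; since for fixed $v$ the harmonic extension minimizes $\int_X|\nabla w|^2\,dx$ over all $w\in H^1(X)$ with $w|_Y=v$, substituting $a(v)=\min_w\int_X|\nabla w|^2\,dx$ turns this into the joint minimization \eqref{eqn-1-3}.

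The work here is bookkeeping rather than depth: all the weight rests on the three Sobolev facts on $X$ quoted at the outset, which are classical for Lipschitz domains — and edge domains are Lipschitz — so, as asserted, the proposition comes "easily." The one point genuinely deserving care is the identification of $\Lambda$ with $v\mapsto-\partial_\nu Jv|_Y$ across the edge set $Z$, where $Jv$ need not be regular up to $Y$ and the normal derivative must be interpreted weakly (dual to the trace); this is precisely why it is cleanest to build $\Lambda$ from the form first and to recover the normal-derivative expression only where it is classical.
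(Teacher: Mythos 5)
The paper offers no proof of Proposition~\ref{prop-1-2} (it is introduced with ``one can prove easily''), so there is nothing to compare line by line; your form-theoretic construction is the standard route and is essentially correct: the closed non-negative form $a(v)=\int_X|\nabla Jv|^2\,dx$ on $H^{1/2}(Y)$, the representation theorem, compactness of $H^1(X)\to H^{1/2}(Y)\hookrightarrow \sL^2(Y)$, and min--max together yield everything claimed, and this is clearly the realization the author has in mind (cf.\ footnote~\ref{foot-2} and the identity (\ref{A-13}) used later). The one step where your argument is softer than the claim is ``essentially self-adjoint'': density of the $\sC^2$-traces in $H^{1/2}(Y)$ makes them a \emph{form} core for $a$, but essential self-adjointness of $\Lambda$ on that domain requires them to be an \emph{operator} core for the form operator $A$, i.e.\ density of $\Ran(\Lambda+I)$ or graph-norm approximation of elements of $D(A)$; this is not automatic from form-core density and in this setting rests on regularity of solutions of $(A+I)v=f$ near the edge set $Z$ (exactly the mapping properties developed in Propositions~\ref{prop-2-2} and~\ref{prop-2-6}). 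If you either supply that regularity input or read ``essentially self-adjoint'' as ``admits the canonical self-adjoint realization via the closed form,'' your proof is complete.
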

\enlargethispage{\baselineskip}

\begin{corollary}\label{cor-1-3}
The number of eigenvalues of $\Lambda$, which are less than $\lambda$, equals to the maximal dimension of the linear space of $\sC^2$-functions, on which the quadratic form
\begin{equation}
\int _X |\nabla w|^2\,dx-\lambda \int_Y |w|^2\,dx'
\label{eqn-1-4}
\end{equation}
is negative definite.
\end{corollary}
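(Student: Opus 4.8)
The plan is to reduce the statement to the Glazman variational principle for the self-adjoint operator $\Lambda$, using Dirichlet's principle to pass between the quadratic form \eqref{eqn-1-4} on $X$ and the form of $\Lambda$ on $Y$. Write $\N(\lambda)$ for the number of eigenvalues $\lambda_n<\lambda$. Two elementary facts will be used repeatedly. \emph{First}, if $w=Jv$ is harmonic in $X$ with $w|_Y=v$, then Green's formula (applicable since $X$ is regular enough, in particular near its edges), together with $\Delta w=0$ and $\Lambda v=-\partial_\nu Jv|_Y$, gives $\int_X|\nabla w|^2\,dx=\langle\Lambda v,v\rangle_{\sL^2(Y)}$; hence on harmonic functions the form \eqref{eqn-1-4} coincides with $\langle(\Lambda-\lambda)v,v\rangle$. \emph{Second}, for an arbitrary $w$ with $w|_Y=v$ write $w=Jv+\varphi$ with $\varphi|_Y=0$; the cross term $\int_X\nabla Jv\cdot\overline{\nabla\varphi}\,dx$ vanishes after integration by parts, because $Jv$ is harmonic and $\varphi$ has zero trace, so we obtain Dirichlet's principle $\int_X|\nabla w|^2\,dx=\int_X|\nabla Jv|^2\,dx+\int_X|\nabla\varphi|^2\,dx\ge\int_X|\nabla Jv|^2\,dx$.

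To prove ``$\geq\N(\lambda)$'', let $w^\dag_0,\dots,w^\dag_{\N(\lambda)-1}$ be the $\sL^2(Y)$-orthonormal eigenfunctions of $\Lambda$ with eigenvalues $<\lambda$. On $L_0\coloneqq\operatorname{span}\{Jw^\dag_0,\dots,Jw^\dag_{\N(\lambda)-1}\}$ the trace of $\sum_kc_kJw^\dag_k$ is $\sum_kc_kw^\dag_k$, and by the first fact the form \eqref{eqn-1-4} at this element equals $\sum_k(\lambda_k-\lambda)|c_k|^2<0$ unless all $c_k$ vanish; so \eqref{eqn-1-4} is negative definite on the $\N(\lambda)$-dimensional space $L_0$. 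Since the corollary asks for $\sC^2$-functions while the $Jw^\dag_k$ need not be $\sC^2$ up to the edges, we approximate: replace the boundary data $w^\dag_k$ by $\sC^2$-data $\tilde v_k$ close to $w^\dag_k$ in $H^{1/2}(Y)$. Because the form \eqref{eqn-1-4} is continuous on $H^1(X)$ and the harmonic extension $J\colon H^{1/2}(Y)\to H^1(X)$ is bounded, negative definiteness persists on $\operatorname{span}\{J\tilde v_0,\dots,J\tilde v_{\N(\lambda)-1}\}$, which is again $\N(\lambda)$-dimensional and now consists of $\sC^2$-functions.

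To prove ``$\leq\N(\lambda)$'', suppose \eqref{eqn-1-4} is negative definite on a space $L$ of $\sC^2$-functions. If $w\in L$ has $w|_Y=0$, then \eqref{eqn-1-4} at $w$ equals $\int_X|\nabla w|^2\,dx\ge0$, forcing $w=0$; hence $w\mapsto v\coloneqq w|_Y$ is injective on $L$ and $V\coloneqq\{\,w|_Y:w\in L\,\}$ has $\dim V=\dim L$. For $v\in V\setminus\{0\}$ with corresponding $w\in L$, the two facts above give $\langle(\Lambda-\lambda)v,v\rangle=\int_X|\nabla Jv|^2\,dx-\lambda\int_Y|v|^2\,dx'\le\int_X|\nabla w|^2\,dx-\lambda\int_Y|w|^2\,dx'<0$, so $\Lambda-\lambda$ is negative definite on $V$, and the Glazman variational principle yields $\dim L=\dim V\le\N(\lambda)$. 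Combined with the previous paragraph, this gives the asserted equality. The only genuinely delicate point is the regularity near the edges — the validity of Green's formula and of the density/approximation step — but this is exactly the setting in which $J$ and $\Lambda$ are well defined, and it is routine to verify for manifolds with edges.
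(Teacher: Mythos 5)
Your argument is correct in substance and is the standard variational route (Green's formula/Dirichlet's principle on $X$ combined with Glazman's lemma for $\Lambda$ on $Y$) that the paper treats as immediate from Proposition~\ref{prop-1-2}\ref{propo-1-2-ii}; the paper gives no separate proof of the corollary. One step in your lower bound is stated too strongly: a harmonic function with $\sC^2$ boundary data is in general \emph{not} $\sC^2$ up to an edge (it behaves like $r^{\pi/\alpha}$ there), so $\operatorname{span}\{J\tilde v_0,\dots,J\tilde v_{\N(\lambda)-1}\}$ need not consist of $\sC^2$-functions as you claim. The repair is immediate: approximate the $Jw^\dag_k$ directly by $\sC^2(\bar{X})$-functions in the $\sH^1(X)$-norm (density of smooth functions in $\sH^1$), and use the continuity of the form \textup{(\ref{eqn-1-4})} on $\sH^1(X)$ via the trace inequality to preserve negative definiteness and dimension.
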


\begin{proposition}\label{prop-1-4}
Operator $\Lambda$ has a domain $\sH^1(Y)$ and 
\begin{equation}
\|\Lambda u\|_{Y}+\|u\|_{Y}\asymp \|u\|_{\sH^1(Y)},
\label{eqn-1-5}
\end{equation}
 where $(.,.)$ and $\|.\|$ denote  $\sL^2$ inner product and norm.
\end{proposition}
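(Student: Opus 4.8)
The plan is to deduce the identification of the operator domain from a two-sided \emph{estimate} of the form \eqref{eqn-1-5} on smooth functions, combined with the essential self-adjointness of Proposition~\ref{prop-1-2}\ref{propo-1-2-i}. Write \eqref{eqn-1-5} as the conjunction of (a) $\|\Lambda u\|_Y\lesssim\|u\|_{\sH^1(Y)}$ and (b) $\|u\|_{\sH^1(Y)}\lesssim\|\Lambda u\|_Y+\|u\|_Y$, both first proved for $u$ smooth. Since $\Lambda$ (meant as its self-adjoint closure) is closed and smooth functions are dense in $\sL^2(Y)$ and in $\sH^1(Y)$, (a) plus a limiting argument gives $\sH^1(Y)\subseteq\operatorname{Dom}(\Lambda)$ with $\|\Lambda u\|_Y\lesssim\|u\|_{\sH^1(Y)}$ there (approximate $u$ in $\sH^1(Y)$; the images under $\Lambda$ are then Cauchy in $\sL^2(Y)$), while (b) likewise extends to $\operatorname{Dom}(\Lambda)$ and yields $\operatorname{Dom}(\Lambda)\subseteq\sH^1(Y)$. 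As these are genuine set inclusions, $\operatorname{Dom}(\Lambda)=\sH^1(Y)$ with equivalent norms.

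For the estimates themselves the basic objects are the harmonic extension $w=Ju$ and the energy identity $(\Lambda u,u)=\int_X|\nabla w|^2\,dx$ from Proposition~\ref{prop-1-2}. Because $w$ minimises $\int_X|\nabla v|^2\,dx$ among all $v\in\sH^1(X)$ with $v|_Y=u$, the trace theorem on $X$ together with the boundedness of some right inverse of the trace map — both available since $X$ is of the regular class of footnote~\ref{foot-1} — give $\int_X|\nabla w|^2\,dx+\|u\|_Y^2\asymp\|u\|_{\sH^{1/2}(Y)}^2$, i.e. the \emph{form} domain of $\Lambda$ is $\sH^{1/2}(Y)$. Combined with $\int_X|\nabla w|^2\,dx=(\Lambda u,u)\le\|\Lambda u\|_Y\|u\|_Y$ this already yields the half estimate $\|u\|_{\sH^{1/2}(Y)}\lesssim\|\Lambda u\|_Y+\|u\|_Y$; the content of \eqref{eqn-1-5} is the remaining gain of a further half derivative along $Y$, which is elliptic regularity for $w$ up to the boundary.

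To extract that gain I would use a Rellich–Pohozaev identity for the harmonic function $w$ on $X$: for a vector field $h$ on $\bar X$, integrating $\operatorname{div}\bigl(|\nabla w|^2 h-2(h\cdot\nabla w)\nabla w\bigr)$ over $X$ produces a boundary term on $Y$ of the shape $\int_Y(h\cdot\nu)\bigl((\partial_\nu w)^2-|\nabla_Y u|^2\bigr)\,dx'+2\int_Y(h\cdot\nabla_Y u)\,\partial_\nu w\,dx'$ plus an interior term bounded by $C\int_X|\nabla w|^2\,dx$. One checks that $h$ can be chosen with $h\cdot\nu\ge c>0$ on all of $Y$, including the edges: in the model coordinates the two inner normals along an inner edge are $e_1$ and $e_2$ and $h=e_1+e_2$ works, and along an outer edge (the faces lying on the two negative half-axes) the inner normals are again $e_1$ and $e_2$, so the same $h$ works; away from $Z$ one takes $h$ close to $\nu$, patching by a partition of unity at the cost of a bounded $\operatorname{div}h$. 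Cauchy–Schwarz on the cross term and absorption then give $\|\partial_\nu w\|_{\sL^2(Y)}\asymp\|\nabla_Y u\|_{\sL^2(Y)}$ modulo $C\bigl(\int_X|\nabla w|^2\,dx\bigr)^{1/2}\lesssim C\|u\|_{\sH^{1/2}(Y)}$; since $\|\Lambda u\|_Y=\|\partial_\nu w\|_{\sL^2(Y)}$ and $\|u\|_{\sH^1(Y)}\asymp\|\nabla_Y u\|_{\sL^2(Y)}+\|u\|_Y$, this gives both (a) and (b), hence \eqref{eqn-1-5}. (At regular boundary points one may alternatively invoke that $\Lambda$ is there a classical elliptic first-order $\Psi$DO with principal symbol $|\xi'|$ and use a parametrix; the Rellich identity is really only needed near $Z$.)

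The main obstacle is rigour near the outer (reentrant) edge points. There $X$ is not a Lipschitz-graph domain, and even for smooth Dirichlet data $w=Ju$ behaves only like $r^{\pi/\alpha}$ with opening angle $\alpha=\tfrac32\pi$ near $Z$, so $\nabla w$ is unbounded at $Z$ and a priori $\partial_\nu w|_Y$ lies only in $\sH^{-1/2}(Y)$. One must justify — by an $\sL^2$ regularity / nontangential-maximal-function limiting argument of Verchota type adapted to these cones, or by an explicit computation in the model $(\bR^2\setminus\bR^{-\,2})\times\bR^{d-1}$ — that $\partial_\nu w|_Y\in\sL^2(Y)$ whenever $\nabla_Y u\in\sL^2(Y)$, which holds precisely because $\alpha<2\pi$ (i.e. because slits are excluded), and that the Rellich identity, including the cross terms along the two faces meeting at $Z$, remains valid through this mild singularity. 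Once the boundary identity is licensed, reassembling the partition of unity and invoking closedness of $\Lambda$ completes the proof.
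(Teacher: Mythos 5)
Your proof is correct and follows essentially the same route as the paper: the paper's argument is exactly the Rellich identity $0=-(\Delta w,\ell\cdot\nabla w)_X$ with a vector field $\ell$ making an acute angle with \emph{both} inner normals along $Z$ (your $h\cdot\nu\ge c>0$ at the edges), whose boundary quadratic form $Q(\nabla w)$ is then seen to be positive in the normal direction and negative tangentially, yielding $\|\partial_\nu w\|^2+C\|w\|^2_{\sH^1(X)}\asymp\|\nabla_Y u\|^2$, combined with $\|w\|^2_{\sH^1(X)}\lesssim\|u\|^2_{\sH^{1/2}(Y)}$. Your additional remarks (the closedness/density wrapper and the justification of the identity through the $r^{\pi/\alpha}$ singularity at reentrant edges) only make explicit steps the paper leaves implicit.
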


\begin{proof}
Let  $L=\ell\cdot \nabla$, $\ell$ be  a vector field which makes an acute  angle with the inner normal (at $Z$--with both inner normals). Consider 
\begin{multline}
0=-(\Delta w, L w)_X= (\nabla w, \nabla L w)_X + (\partial_\nu  w, L w)_Y=\\
\int  Q(\nabla w)\,dy + O(\|w\|^2_{\sH^1(X)}),
\label{eqn-1-6}
\end{multline}
where 
\begin{equation}
Q(\nabla w)= (\nu\cdot \nabla w)(\ell\cdot\nabla w )-\frac{1}{2} \nu\cdot\ell|\nabla w|^2.
\label{eqn-1-7}
\end{equation}
This quadratic form has one  positive  and $d$ negative eigenvalues. Further, on the subspace orthogonal to $\ell$, all eigenvalues are negative. Then 
\begin{equation}
\|\partial _\nu w\|^2 + C\|w\|_{\sH^1(X)}^2 \asymp \|w\|_{\sH^(Y)}^2.
\label{eqn-1-8}
\end{equation}
Combined with the estimate for $\|w\|_{\sH^1(X)}^2\le C\|w\|_{\sH^{\frac{1}{2}}(Y)}^2 $ it implies the statement.
\end{proof}

\begin{remark}\label{rem-1-5}
\begin{enumerate}[label=(\roman*), wide, labelindent=0pt]
\item\label{rem-1-5-i}
If $Y$ is infinitely smooth, then $\Lambda$ is the first-order pseudodifferential operator on $Y$ with the principal symbol $(g_Y(x,\xi))^{1/2}$, where $g_Y$ is the restriction of the metrics to $Y$. Then the standard results hold:
\begin{equation}
\N(\lambda)=\kappa_0 \lambda^d + O(\lambda^{d-1})\qquad\text{as\ \ }\lambda \to +\infty
\label{eqn-1-9}
\end{equation}
with the standard coefficient $\kappa_0 =(2\pi)^{-d}\omega_d \mes  (Y)$, where $\mes(Y)$ means $d$-dimensional volume of $Y$, $\omega_d$ is the volume of the unit ball in $\bR^d$. We also can get two-term asymptotics with the same remainder estimate for $\N(\lambda)*\lambda_+^{r-1}$, $0<r \le 1$.

\item\label{rem-1-5-ii}
Moreover, if the set  of all periodic geodesics of $Y$ has measure $0$, then
\begin{equation}
\N(\lambda)=\kappa_0 \lambda^d + \kappa_1 \lambda^{d-1}+o(\lambda^{d-1})\qquad\text{as\ \ }\lambda \to \infty.
\label{eqn-1-10}
\end{equation}
We also can get two-term asymptotics (three-term for $r=1$) with the same remainder estimate for 
$\N(\lambda)*\lambda_+^{r-1}$, $0<r \le 1$.
The same asymptotics, albeit with a larger number of terms, hold for $r>1$.

\item\label{rem-1-5-iii}
``Regular'' singularities of the dimension $<(d-1)$ (like conical points in $3\D$) do not cause any problems for asymptotics of $\N(\lambda)$---we can use a rescaling technique to cover them; moreover, in the framework of this paper they would not matter even combined with edges (like vertices in $3\D$). 
\end{enumerate}
\end{remark}

\chapter{Dirichlet-to-Neumann operator}
\label{sect-2}

\section{Toy-model:  dihedral angle}
\label{sect-2-1}

Let $Z=\bR^{d-1}$ with the Euclidean metrics, $X=  \cX \times Z $, $Y=  \cY\times Z$, where $\cX$ is a planar angle of solution $\alpha$, $0<\alpha\le 2\pi$, $\cY=\cY_1\cup \cY_2$, $\cY_j$ are rays (see Figure~\ref{fig-2}).

Then one can identify $Y$ with $\bR^d$ with coordinates $(s,z)$, where $z\in Z$ and 
\begin{itemize}[label=-]
\item $s=\dist (y,Z)$ for for a point   $y\in Y_1= \cY_1\times Z$, 
\item
$s=-\dist (y,Z)$ for for a point  $y\in Y_2= \cY_2\times Z$. 
\end{itemize}

Then we have a Euclidean metrics and a corresponding positive Laplacian $\Delta_Y$ on $Y$.

\begin{remark}\label{rem-2-1}
\begin{enumerate}[label=(\roman*), wide, labelindent=0pt]
\item\label{rem-2-1-i}
We can consider any angle $\alpha >0$, including $\alpha >2\pi$ (in which case $X$ could be defined in the polar coordinates, but then we need to address some issues with the domain of operator).

\item\label{rem-2-1-ii}
If $\alpha=\pi$, then $\Lambda=\Delta_Y^{1/2}$. 

\item\label{rem-2-1-iii}
We say that $X$ is a \emph{proper angle\/} if $\alpha\in (0,\pi)$ and that $X$ is a \emph{improper angle\/} if $\alpha\in (\pi,2\pi)$. We are not very concerned about $\alpha=\pi,\,2\pi$ since these cases will be forbidden in the general case.
\end{enumerate}
\end{remark}

For this toy-model we can make a partial Fourier transform $F_{z\to \zeta}$ and then study equation in the planar angle:
\begin{equation}
\Delta_2 w+w=0,
\label{eqn-2-1}
\end{equation}
where $\Delta_2$ is a positive $2\D$-Laplacian and we made also a change of variables $x''\mapsto |\zeta|\cdot x''$, $x''=(x_1,x_2)$.
Denote by $\bar{J}$ and $\bar{\Lambda}$ operators $J$ and $\Lambda$ for (\ref{eqn-2-1}). This problem is extensively studied in Appendix~\ref{sect-A}.

Then we can use the separation of variables.  Singularities at the vertex for solutions to (\ref{eqn-2-1}) and 
$w|_Y=0$, are the same as for $\Delta _2w=0$, $w|_Y=0$ and they are combinations of 
$r^{\pi n/\alpha}\sin (\pi n\theta/\alpha)$ with $n=1,2,\ldots$, where 
$(r,\theta)\in \bR^+ \times (0,\alpha)$ are polar coordinates.

This show the role of $\alpha$: if $\alpha \in (0,\pi)$ those functions are  in $\sH^{\sigma}_\loc(\cX)$ with 
$\sigma<  1+\pi n/\alpha$, and  $\partial_\nu w|_Y$ belong to $\sH^{\sigma-3/2}_\loc(\cY)$.

One can prove easily the following Propositions~\ref{prop-2-2} and \ref{prop-2-3} below:

\begin{proposition}\label{prop-2-2}
The following are bounded operators
\begin{align}
& \Delta_\D^{-1}:\sH^{\sigma} (X)\to \sH^{\sigma+2}(X),
\label{eqn-2-2}\\
&J:\sH^{\sigma+\frac{3}{2}}(Y)\to \sH^{\sigma+2}(X),
\label{eqn-2-3}\\
&\Lambda :\sH^{\sigma+\frac{3}{2}}(Y)\to \sH^{\sigma+\frac{1}{2}}(X),
\label{eqn-2-4}
\end{align}
where $\Delta_\D$ is an  operator $\Delta$ with zero Dirichlet boundary conditions on $Y$ and
\begin{itemize}[label=-]
\item $\sigma\in [-\frac{1}{2},0]$, if $\alpha \in (0,\pi)$, and
\item $\sigma\in [-\frac{1}{2},\bar{\sigma})$ with  $\bar{\sigma}=\pi /\alpha-1$ otherwise.
\end{itemize}
\end{proposition}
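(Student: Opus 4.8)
The plan is to reduce everything to elliptic regularity up to the boundary for the Dirichlet Laplacian on $X$, together with mapping properties of the trace operator, and then to keep track of the loss caused by the corner singularities $r^{\pi n/\alpha}\sin(\pi n\theta/\alpha)$ described just above. First I would treat \eqref{eqn-2-2}: away from the edge $Z$, $\Delta_\D^{-1}$ is the usual resolvent of the Dirichlet Laplacian on a smooth manifold with boundary, so it gains two derivatives in the interior and near regular boundary points by the standard elliptic estimates; the only issue is a neighbourhood of $Z$, where after the partial Fourier transform $F_{z\to\zeta}$ and rescaling one is left with the $2\D$ problem \eqref{eqn-2-1} in the planar angle $\cX$, which (by the analysis of Appendix~\ref{sect-A}) is invertible on $\sH^\sigma\to\sH^{\sigma+2}$ precisely for $\sigma$ below the first singular exponent, i.e. for $\sigma+2<1+\pi/\alpha$, equivalently $\sigma<\bar\sigma=\pi/\alpha-1$; for $\alpha\in(0,\pi)$ this threshold exceeds $0$, so the whole range $\sigma\in[-\tfrac12,0]$ is admissible. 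Reassembling the $\zeta$-pieces via Plancherel (the rescaling $x''\mapsto|\zeta|x''$ only contributes powers of $|\zeta|$ that are absorbed into the $\sH^s$ norms) gives \eqref{eqn-2-2}.

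For \eqref{eqn-2-3} I would write $Jv=E v+\Delta_\D^{-1}\Delta (Ev)$ where $E$ is any bounded extension operator $\sH^{\sigma+3/2}(Y)\to\sH^{\sigma+2}(X)$; then $\Delta(Ev)\in\sH^\sigma(X)$ and \eqref{eqn-2-2} finishes the job, once one checks that the correction is harmonic and has the right trace, i.e. $Jv-Ev\in\sH^{\sigma+2}(X)\cap \{w|_Y=0\}$. Near $Z$ one again passes to the $2\D$ model and invokes Appendix~\ref{sect-A} for the mapping property of $\bar J$ on the angle; the admissible range of $\sigma$ is again dictated by the condition that $\sigma+2$ stay below $1+\pi/\alpha$. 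Finally \eqref{eqn-2-4} is immediate from \eqref{eqn-2-3} and the trace theorem for the normal derivative: $\Lambda v=-\partial_\nu (Jv)|_Y$, and the map $w\mapsto\partial_\nu w|_Y$ is bounded $\sH^{\sigma+2}(X)\to\sH^{\sigma+1/2}(Y)$ on the smooth part of $Y$, while near $Z$ the behaviour of $\partial_\nu w|_Y$ in $\sH^{\sigma-1/2}_\loc(\cY)$ computed above shows no extra loss beyond what is already built into the stated range of $\sigma$.

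The step I expect to be the real obstacle is the precise determination of the endpoint $\bar\sigma=\pi/\alpha-1$ and the verification that the model operators on the planar angle are \emph{boundedly invertible} (not merely Fredholm) on exactly the asserted Sobolev scales — this is where the corner analysis of Appendix~\ref{sect-A} is essential, including the claim that for $\alpha<\pi$ no singular exponent falls in the relevant range so that one may take the closed interval $[-\tfrac12,0]$, whereas for $\alpha>\pi$ one must stop strictly below $\bar\sigma$. A secondary technical point is making the cut-off/partition-of-unity argument near $Z$ uniform in the Fourier parameter $\zeta$, so that the rescaling $x''\mapsto|\zeta|x''$ does not spoil the constants; this is routine but must be stated carefully.
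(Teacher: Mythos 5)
The paper offers no proof of Proposition~\ref{prop-2-2} --- it is introduced with ``one can prove easily'' --- so there is nothing to compare line by line; your outline is the standard argument one would expect (localization, partial Fourier transform in $z$, Kondratiev-type corner regularity for the $2\D$ Dirichlet problem in the angle, then $J=E-\Delta_\D^{-1}\Delta E$ and the normal-derivative trace), and the identification of the threshold $\bar\sigma=\pi/\alpha-1$ from the singular exponents $r^{\pi n/\alpha}\sin(\pi n\theta/\alpha)$ is exactly the mechanism the paper alludes to in the preceding paragraph. Two corrections. First, a small sign slip: the correction term is $Jv=Ev-\Delta_\D^{-1}\Delta(Ev)$. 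Second, and more substantively, your derivation of \eqref{eqn-2-4} from \eqref{eqn-2-3} via the trace theorem fails at the closed lower endpoint $\sigma=-\tfrac12$: there $Jv\in\sH^{3/2}(X)$, so $\partial_\nu Jv\in\sH^{1/2}(X)$, and the trace $\sH^{1/2}(X)\to\sL^2(Y)$ is \emph{not} bounded. The endpoint statement $\Lambda:\sH^1(Y)\to\sL^2(Y)$ is precisely Proposition~\ref{prop-1-4}, proved by a Rellich identity that uses harmonicity of $w$; you should either invoke that proposition for $\sigma=-\tfrac12$ and interpolate, or restrict your trace argument to $\sigma>-\tfrac12$. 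A minor attribution point: Appendix~\ref{sect-A} contains energy identities and spectral information for the model angle, not the $\sH^\sigma\to\sH^{\sigma+2}$ invertibility you cite; that input is classical corner (Kondratiev) theory and should be referenced as such, with the uniformity in the rescaling parameter $|\zeta|$ handled exactly as you indicate.
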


\begin{proposition}\label{prop-2-3}
For equation \textup{(\ref{eqn-2-1})} in $\cX$ 
\begin{equation}
\bar{\Lambda}- (D_s^2+1)^{1/2}  = \sum_{j+k\le 1} D_s^j \bar{K}_{jk}D_s^k\,,
\label{eqn-2-5}
\end{equation}
where operators $\bar{K}_{jk}$ have Schwartz kernels $\bar{K}_{jk}(s,s')$ such that
\begin{multline}
|D_s^pD_{s'}^q \bar{K}_{jk}(s,t') |\le \\
C_{pqm }|s|^{-(\bar{\sigma}-p)_-}|s'|^{-(\bar{\sigma}-q)_-}(|s|+|s'|)^{-p-q +(\bar{\sigma}-p)_- + (\bar{\sigma}-q)_-}
(|s|+|s'|+1)^m
 \label{eqn-2-6}
\end{multline}
and $l_\pm \coloneqq \max (\pm l,0)$ and $m$ is arbitrarily large.
\end{proposition}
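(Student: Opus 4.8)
The statement is local away from the vertex: far from $s=0$ the angle $\cX$ coincides with a half-plane, on which, by Remark~\ref{rem-2-1}\ref{rem-2-1-ii}, the Dirichlet-to-Neumann operator is exactly $(D_s^2+1)^{1/2}$; so the difference $\bar\Lambda-(D_s^2+1)^{1/2}$ must be concentrated near the vertex, with a singularity there governed by the corner exponents $\pi n/\alpha$. I would turn this into a proof in three steps. First I would rewrite both operators as normal traces of extensions: $\bar\Lambda v=-\partial_\nu \bar J v|_{\cY}$, while $(D_s^2+1)^{1/2}v=-\partial_\nu Ev|_{\cY}$, where $E$ solves $\Delta_2 w+w=0$ on a half-plane with Dirichlet data $v$ on its bounding line and is explicit ($\widehat{Ev}(\xi)=e^{-x_2\langle\xi\rangle}\hat v(\xi)$ after Fourier transform along the line). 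Then $\bar\Lambda-(D_s^2+1)^{1/2}$ is the normal trace on $\cY$ of $\bar J v$ minus a suitable extension of $Ev$ into $\cX$, the latter glued from the two half-plane extensions attached to $\cY_1$ and $\cY_2$ by a partition of unity; everything is thereby reduced to comparing these two almost explicit extension operators, for which Appendix~\ref{sect-A} supplies the needed representations.

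Second, the far region. The Green's function of $\Delta_2+1$ with Dirichlet conditions on any of these domains decays exponentially on unit scale away from its pole. Taking a partition of unity on $\cY$ subordinate to a neighbourhood of the vertex and to neighbourhoods of interior points of the two rays, I would show that for $v$ supported at distance $\gtrsim R$ from the vertex on one ray, $\bar J v$ agrees with the corresponding half-plane extension $Ev$ up to an error of size $O(R^{-\infty})$, because the two domains differ only at distance $\gtrsim R$ from $\supp v$ and the corresponding Green's functions then differ by an exponentially small amount. This accounts for the far-field factor in \textup{(\ref{eqn-2-6})}, which one may take as rapid decay (the exponential decay of the Green's function beats any polynomial), and confines the genuine discrepancy to the region where $|s|$ or $|s'|$ is bounded, i.e.\ near the vertex.

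Third, the near region. There I would use separation of variables, i.e.\ the Mellin transform in the polar radius $r$, together with the corner solutions $r^{\pi n/\alpha}\sin(\pi n\theta/\alpha)$ of $\Delta_2$. Since the term $w$ in \textup{(\ref{eqn-2-1})} is subordinate to $\Delta_2$ as $r\to0$, the extension $\bar J v$, hence the Schwartz kernel of $\bar\Lambda$, is — modulo lower-order corrections carrying extra powers of $r$ — homogeneous of the scaling degree fixed by $\Delta_2$; this yields the homogeneity factor $(|s|+|s'|)^{-p-q+(\bar\sigma-p)_-+(\bar\sigma-q)_-}$ in \textup{(\ref{eqn-2-6})}. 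The weights $|s|^{-(\bar\sigma-p)_-}$, $|s'|^{-(\bar\sigma-q)_-}$ with $\bar\sigma=\pi/\alpha-1$ come from the regularity thresholds $\sigma<1+\pi n/\alpha$ recorded before Proposition~\ref{prop-2-2} and the mapping properties \textup{(\ref{eqn-2-2})}--\textup{(\ref{eqn-2-4})}: a function with corner behaviour $r^{\pi/\alpha}$ lies in $\sH^{\sigma+2}$ only for $\sigma<\bar\sigma$, and differentiating it $p$ times in $s$ costs exactly $(\bar\sigma-p)_-$ powers of $|s|$. The algebraic form $\sum_{j+k\le1}D_s^j\bar K_{jk}D_s^k$ is forced by the fact that $\bar\Lambda$ and $(D_s^2+1)^{1/2}$ are first order with the same principal symbol: the non-smoothing part of the difference can be written with at most one $D_s$ on each side, after which the $\bar K_{jk}$ are smoothing off the vertex, and bounds on their kernels follow by differentiating the Mellin representation. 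Gluing the near and far pieces with the partition of unity and checking compatibility on the overlap gives \textup{(\ref{eqn-2-5})}--\textup{(\ref{eqn-2-6})} with finite constants $C_{pqm}$.

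The main obstacle is the near-vertex step: one must control the full corner asymptotic expansion of $\bar J v$ to all orders in $r$ with error terms meeting the precise weights, and track how $D_s^p$ and $D_{s'}^q$ interact simultaneously with the homogeneity and with the singular weights — the bookkeeping of the $(\cdot)_-$ truncations and of $(\bar\sigma-p)_-$ versus $\bar\sigma_-$. The case $\alpha>\pi$ (the improper angle), where $\pi/\alpha<1$ and so $\bar\sigma<0$ and the kernel is genuinely singular, needs extra care with the domain of $\bar\Lambda$, as flagged in Remark~\ref{rem-2-1}\ref{rem-2-1-i}.
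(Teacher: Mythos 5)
The paper offers no proof of Proposition~\ref{prop-2-3}: it is introduced by ``One can prove easily the following Propositions\dots'' and the surrounding text only hints at the intended route (partial Fourier transform, separation of variables, the corner exponents $r^{\pi n/\alpha}\sin(\pi n\theta/\alpha)$, and the material of Appendix~\ref{sect-A}). Your sketch follows exactly that route and is sound in outline: the reduction to a comparison of the two extension operators, the far-field agreement with the half-plane Dirichlet-to-Neumann operator via the exponential decay of the Green's function of $\Delta_2+1$ (note that the factor $(|s|+|s'|+1)^m$ in \textup{(\ref{eqn-2-6})} is surely meant as $(|s|+|s'|+1)^{-m}$, i.e.\ rapid decay, which is how you read it), and the near-vertex Mellin/Kondratiev analysis producing the weights with $\bar\sigma=\pi/\alpha-1$ are all the right ingredients, and your accounting of how $D_s^p$ costs $(\bar\sigma-p)_-$ powers of $|s|$ matches the stated bound.

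The one step you assert rather than derive is the algebraic form $\sum_{j+k\le 1}D_s^j\bar K_{jk}D_s^k$. Having the same principal symbol only tells you the difference is of order zero; it does not by itself produce a decomposition with at most one $D_s$ on each side and kernels satisfying \textup{(\ref{eqn-2-6})}. To get this you must actually manufacture the $D_s$ factors, e.g.\ by integrating by parts in the explicit (Mellin or layer-potential) representation of the kernel of $\bar\Lambda-(D_s^2+1)^{1/2}$ and checking that each integration by parts trades one unit of homogeneity of the remaining kernel for one exterior derivative, uniformly up to the vertex where the corner exponents take over. This is precisely the bookkeeping you flag as the ``main obstacle,'' so you have correctly located where the real work lies; but as written the decomposition itself is a gap rather than a consequence of the principal-symbol identity. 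Since the paper supplies no argument at all, your proposal is, modulo that step, as complete a justification of \textup{(\ref{eqn-2-5})}--\textup{(\ref{eqn-2-6})} as the text itself provides.
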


Then

\begin{corollary}\label{cor-2-4}
For the toy-model in $X$
\begin{equation}
\Lambda- \Delta_Y^{1/2}  = \sum_{j+k\le 1} D_s^j K_{jk}D_s^k\,,
 \label{eqn-2-7}
\end{equation}
where operators $K_{jk}$ have Schwartz kernels 
\begin{equation}
K_{j,k}(x',s; y', s')=
(2\pi)^{1-d} \iint |\xi'|^{2-j-k} \bar{K}_{jk}(s|\xi'|,\, s'|\xi'|) e^{-i\langle x'-y',\xi'\rangle}\,d\xi'.
\label{eqn-2-8}
\end{equation}
\end{corollary}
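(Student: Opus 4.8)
The plan is to obtain the decomposition \eqref{eqn-2-7} for the full toy-model operator $\Lambda$ on $X=\cX\times Z$ by reducing to the two-dimensional model of Proposition~\ref{prop-2-3} via the partial Fourier transform $F_{z\to\zeta}$, then re-expressing the result in terms of the tangential variable $\zeta'$ dual to the edge direction. First I would recall that, since the metric on $X$ is the product of the planar-angle metric with the Euclidean metric on $Z=\bR^{d-1}$, the harmonic extension problem \eqref{eqn-1-1}--\eqref{eqn-1-2} separates: after applying $F_{z\to\zeta}$ the equation $\Delta w=0$ becomes $\Delta_2 w+|\zeta|^2 w=0$ in the planar angle $\cX$, and after the rescaling $x''\mapsto|\zeta|\cdot x''$ (equivalently $s\mapsto |\zeta|s$) this is exactly \eqref{eqn-2-1}. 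Hence $\Lambda$ acts, in the $\zeta$-representation, as $|\zeta|$ times $\bar\Lambda$ evaluated in the rescaled variable; correspondingly $\Delta_Y^{1/2}$, whose symbol in the $(s,z)$ coordinates is $(\xi_s^2+|\zeta|^2)^{1/2}$, becomes $|\zeta|$ times $(D_s^2+1)^{1/2}$ after the same rescaling. Writing $\xi'=(\xi_s,\zeta)$ for the full cotangent variable on $Y\cong\bR^d$, the rescaling $s\mapsto s|\zeta|$ is, up to lower-order commutator terms, the same as $s\mapsto s|\xi'|$, which is what produces the arguments $s|\xi'|$, $s'|\xi'|$ in \eqref{eqn-2-8}.

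Next I would substitute the structural identity \eqref{eqn-2-5} of Proposition~\ref{prop-2-3}, namely $\bar\Lambda-(D_s^2+1)^{1/2}=\sum_{j+k\le1}D_s^j\bar K_{jk}D_s^k$, into the relation $\Lambda-\Delta_Y^{1/2}=F^{-1}_{z\to\zeta}\bigl(|\zeta|\,[\bar\Lambda-(D_s^2+1)^{1/2}]\bigr)F_{z\to\zeta}$, tracking how each factor transforms under the rescaling. The factor $|\zeta|$ together with the two rescalings of $s$ in $D_s^j$ and $D_s^k$ (each $D_s$ in the rescaled variable equals $|\zeta|^{-1}D_s$ in the original variable) yields the prefactor $|\zeta|^{1-j-k}$, which in the full-symbol variable is $|\xi'|^{2-j-k}$ once one of the $|\xi'|$ powers is absorbed into converting the one-dimensional operators into genuine Fourier integral operators on $Y$. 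Assembling the Schwartz kernel: the $z$-variables contribute the factor $(2\pi)^{1-d}e^{-i\langle x'-y',\xi'\rangle}$ from $F^{-1}_{z\to\zeta}$ (note $Y$ has dimension $d$, so the $z$-part has dimension $d-1$, giving the normalization $(2\pi)^{1-d}$ and integration $\iint\,d\xi'$ over the $d$-dimensional $\xi'$), and the $s$-variables contribute $\bar K_{jk}(s|\xi'|,s'|\xi'|)$ from the rescaled kernel of $\bar K_{jk}$; collecting the operators $D_s^j(\cdots)D_s^k$ on the outside gives exactly \eqref{eqn-2-7} with kernels \eqref{eqn-2-8}.

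The main obstacle I expect is the careful bookkeeping at the interface between the genuine rescaling $s\mapsto s|\zeta|$ (which involves only the edge-dual variable $\zeta$) and the asserted form with $s|\xi'|$ in \eqref{eqn-2-8} (which involves the full cotangent variable, including $\xi_s$): strictly speaking these differ, and one must check that replacing $|\zeta|$ by $|\xi'|$ in the kernel argument is legitimate, i.e. that the difference is absorbed into the stated class of operators $K_{jk}$ and does not alter the conclusion. Here the decay estimates \eqref{eqn-2-6} on $\bar K_{jk}$ are what make this work: they force $\bar K_{jk}(s|\zeta|,s'|\zeta|)$ to be a classical-type symbol in $\zeta$ that is homogeneous of degree $0$ modulo faster-decaying remainders for large argument, so that commuting $D_s$ past the kernel and swapping $|\zeta|\rightsquigarrow|\xi'|$ only generates terms of the same structure with $j+k\le1$ (and, for the error terms, strictly lower order, hence harmless). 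A secondary, purely technical point is justifying the partial Fourier transform and the change of variables $x''\mapsto|\zeta|x''$ on the relevant Sobolev scales — but this is exactly the content already packaged in Proposition~\ref{prop-2-2} and the discussion preceding it, so it may be invoked directly; the substantive content of the corollary is the explicit kernel formula \eqref{eqn-2-8}, which follows by the transparent substitution outlined above.
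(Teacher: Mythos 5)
Your overall route --- partial Fourier transform $F_{z\to\zeta}$, reduction to the planar problem (\ref{eqn-2-1}) by the rescaling $x''\mapsto|\zeta|\cdot x''$, substitution of the decomposition (\ref{eqn-2-5}), and Fourier inversion in $\zeta$ --- is exactly the intended derivation (the paper gives no argument beyond ``Then''), and your power count comes out right: the first-order scaling of the Dirichlet-to-Neumann operator contributes one factor of $|\zeta|$, each $D_s$ expressed in the rescaled variable contributes $|\zeta|^{-1}$, and the Jacobian of $s\mapsto s|\zeta|$ in the Schwartz kernel contributes one more factor, totalling $|\zeta|^{2-j-k}$.

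However, you have misread the notation in (\ref{eqn-2-8}), and the ``main obstacle'' to which you devote your final paragraph is a phantom. The variable $\xi'$ in (\ref{eqn-2-8}) is \emph{not} the full $d$-dimensional cotangent variable $(\xi_s,\zeta)$ on $Y$; it is precisely the $(d-1)$-dimensional variable $\zeta$ dual to the edge coordinate $x'\in Z=\bR^{d-1}$. This is forced by the normalization $(2\pi)^{1-d}=(2\pi)^{-(d-1)}$, by the pairing $\langle x'-y',\xi'\rangle$ with $x'-y'\in\bR^{d-1}$, by the fact that the kernel is written out explicitly in $(s,s')$ rather than through a Fourier transform in $s$, and by the paper's consistent use of $x'\in\bR^{d-1}$, $\xi'$ its dual (see (\ref{eqn-2-14}) and Section~\ref{sect-3}). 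Consequently no replacement of $|\zeta|$ by $|(\xi_s,\zeta)|$ is needed or wanted: with $\xi'=\zeta$ the formula closes exactly, with no commutator corrections and no appeal to symbolic calculus. Had you actually carried out the swap you propose --- arguing that the difference between $\bar K_{jk}(s|\zeta|,s'|\zeta|)$ and $\bar K_{jk}$ evaluated at the full $|\xi'|$ is absorbable into the class --- you would have been proving a different identity from the one stated, and the decay estimates (\ref{eqn-2-6}) do not supply the homogeneity-modulo-lower-order structure in a variable $\xi_s$ (which never enters the kernel) that such an absorption would require. Once $\xi'$ is read correctly, your second paragraph already constitutes the complete proof; your third paragraph should be deleted.
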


\section{General case}
\label{sect-2-2}

Consider now the general case. In this case we can again introduce coordinate $s$ on $Y$ and consider $Y$ as a Riemannian manifold, but with the metrics which is only $\sC^{0,1}$ (Lipschitz class); more precisely, it is 
$\sC^\infty$ on both $Y_1$ and $Y_2$, but the first derivative with respect to $s$ may have a jump on $Z$. It does not, however, prevent us from introduction of $\Delta_Y$ and therefore $\Delta_Y^{\frac{1}{2}}$, but the latter would not be necessarily the classical pseudodifferential operator.

We want to exclude the degenerate cases of the angles $\pi$ and $2\pi$. So, let us assume that
\begin{claim}\label{eqn-2-9}
$Z=\{x\colon x_1=x_2=0\}$ and $X=Z\times \cX$ with a planar angle  $\cX\ni (x_1,x_2)$, disjoint from half-plane and the plane with a cut. 
\end{claim}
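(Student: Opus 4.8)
This last statement is a local normalization rather than a theorem with surprising content: it records that, after a suitable choice of coordinates and after freezing coefficients, a neighbourhood of an edge point of $\bar X$ is the model $Z\times\cX$ with $\cX$ a genuine planar angle whose opening $\alpha$ is neither $\pi$ nor $2\pi$. The plan is to produce such a model by a change of variables, and to identify $\alpha=\pi$, $2\pi$ as exactly the configurations ruled out by the hypothesis that the point is a genuine inner or outer edge point of a manifold with edges.

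First I would fix an edge point $y_0$. By the definition of an inner (resp.\ outer) edge point, a neighbourhood of $y_0$ in $\bar X$ is diffeomorphic to $\bR^{+\,2}\times\bR^{d-1}$ (resp.\ $(\bR^2\setminus\bR^{-\,2})\times\bR^{d-1}$), so the underlying stratified set already has the product shape $Z\times\cX$ with $\cX$ a sector of opening $\pi/2$ or $3\pi/2$ in these crude coordinates. The content of the claim is metric, not topological: I would replace the given chart by one in which the edge $Z$ and the two boundary hypersurfaces $Y_1$, $Y_2$ are simultaneously straightened. This is a Gauss-lemma type construction: first flatten $Z$ using normal coordinates for $Z$ inside $\bar X$; then, within each closed half $Y_j$, flatten $Y_j$ by the inner normal geodesic flow, arranging that the two flattenings agree along $Z$ to first order. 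The resulting chart is $\sC^\infty$ on each of $Y_1$, $Y_2$ separately but only $\sC^{0,1}$ across $Z$, exactly as recorded in the paragraph preceding the claim.

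Next I would read off the opening angle. In the straightened chart the restriction of the metric to the $2$-plane $\{z=\mathrm{const}\}$ transverse to $Z$ is a positive definite quadratic form; a linear change of $(x_1,x_2)$ turns it into the Euclidean form and the cross-section into an honest planar angle $\cX_z$ of some opening $\alpha(z)$. Freezing $z=z_0$ and discarding the $O(|x|+|z-z_0|)$ corrections of the metric — which enter later as the lower-order perturbations handled by the rescaling arguments and by the kernel bounds of Proposition~\ref{prop-2-3} and Corollary~\ref{cor-2-4} — gives $X\approx Z\times\cX$ with the fixed planar angle $\cX=\cX_{z_0}$. The hypothesis that $y_0$ is an edge point, and not a regular point, excludes $\alpha=\pi$ (the half-plane), while the standing requirement that $\bar X$ be a manifold with edges excludes $\alpha=2\pi$ (the plane with a cut), cf.\ Remark~\ref{rem-2-1}\ref{rem-2-1-iii}; hence $\cX$ is disjoint from the half-plane and from the slit plane.

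The main obstacle is the simultaneous flattening across the codimension-two stratum $Z$: there is no single smooth chart adapted to both faces and to the edge, so one must carry out the flattening in a way that confines the loss of smoothness to a Lipschitz jump of the $s$-derivative of the metric across $Z$ (so that $\Delta_Y$ and $\Delta_Y^{1/2}$ still make sense), and one must check that the opening $\alpha(z)$ is Lipschitz in $z$, indeed smooth on each side, so that freezing it costs only a perturbation of the size already absorbed elsewhere. Making these perturbations genuinely lower order, uniformly up to the edge, is the quantitative point, and it is precisely what Proposition~\ref{prop-2-3} and Corollary~\ref{cor-2-4} are designed to supply.
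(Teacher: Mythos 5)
There is a mismatch of genre here: \textup{(\ref{eqn-2-9})} is not a proposition in the paper at all. It is introduced by the words ``We want to exclude the degenerate cases of the angles $\pi$ and $2\pi$. So, let us assume that\dots'', and the \texttt{claim} environment in this source merely attaches an equation number to a displayed piece of text. The paper therefore contains no proof of this statement, and none is intended: it is a standing hypothesis, invoked later as ``under assumption (\ref{eqn-2-9})'' (see Proposition~\ref{prop-2-7}, Proposition~\ref{prop-3-1}, and \textup{(\ref{eqn-3-8})}), and Definition~\ref{def-2-5} immediately afterwards reads off ``due to our assumption either $\alpha(z)\in(0,\pi)$ or $\alpha(z)\in(\pi,2\pi)$.'' Your opening sentence half-recognizes this, but you then proceed to argue for it as if it were a lemma, which is not what the paper does.

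Beyond the genre mismatch, the one substantive step you do attempt is wrong. You claim that $\alpha=\pi$ is excluded because $y_0$ is an edge point rather than a regular point, and that $\alpha=2\pi$ is excluded by the definition of a manifold with edges. Neither follows: the classification of points in the Introduction is by diffeomorphism type of a neighbourhood in $\bar X$ (a corner stratum can perfectly well carry metric opening exactly $\pi$, e.g.\ a boundary that is $\sC^1$ but not $\sC^2$ across $Z$, and an outer edge can open up to a slit of angle $2\pi$). These degenerate values are removed only because the author \emph{chooses} to forbid them --- this is exactly the content of Remark~\ref{rem-2-1}\ref{rem-2-1-iii} (``these cases will be forbidden in the general case'') and of the sentence introducing \textup{(\ref{eqn-2-9})}. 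The straightening/freezing construction you sketch is a reasonable gloss on why the product model $Z\times\cX$ with a Lipschitz metric is the right local picture (and is consistent with the discussion opening Section~\ref{sect-2-2}), but it cannot and does not deliver the non-degeneracy $\alpha\ne\pi,2\pi$, which is the whole point of the assumption.
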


\begin{definition}\label{def-2-5}
For $z\in Z$ let $\alpha(z)$ be an internal angle between two leaves of $Y$ at point $z$ (calculated in the corresponding metrics). Due to our assumption either $\alpha(z) \in (0,\pi)$ or $\alpha (z)\in (\pi,2\pi)$. Let $Z_j$ be a connected component of $Z$.
\begin{enumerate}[label=(\roman*), wide, labelindent=0pt]
\item\label{def-2-5-i}
 $Z_j$ is a \emph{inner edge} if $\alpha(z)\in (0,\pi)$ on $Z_j$, and  
 \item\label{def-2-5-ii}
 $Z_j$ is an \emph{outer edge} if $\alpha(z)\in (\pi,2\pi)$ on $Z_j$. 
 \end{enumerate}
 \end{definition}

One can prove easily

\begin{proposition}\label{prop-2-6}
The following are bounded operators
\begin{align}
& \Delta_\D^{-1}:\sH^{\sigma} (X)\to \sH^{\sigma+2}(X),
\label{eqn-2-10}\\
&J:\sH^{\sigma+\frac{3}{2}}(Y)\to \sH^{\sigma+2}(X),
\label{eqn-2-11}\\
&\Lambda :\sH^{\sigma+\frac{3}{2}}(Y)\to \sH^{\sigma+\frac{1}{2}}(X),
\label{eqn-2-12}
\end{align}
where $\Delta_\D$ is an  operator $\Delta$ with zero Dirichlet boundary conditions on $Y$ and
\begin{enumerate}[label=(\roman*), wide, labelindent=0pt]
\item\label{prop-2-6-i}
$\sigma\in [-\frac{1}{2},0]$, if $\alpha(z)\in (0,\pi)\ \forall z\in Z$, and
\item\label{prop-2-6-ii}
$\sigma\in [-\frac{1}{2},\bar{\sigma})$ with  $\bar{\sigma}=\pi /\bar{\alpha}-1$, $\bar{\alpha}=\max_{z\in Z} \alpha(z)$ otherwise.
\end{enumerate}
\end{proposition}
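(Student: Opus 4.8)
The plan is to reduce all three mapping properties to the single regularity statement (\ref{eqn-2-10}) for the Dirichlet Laplacian, and to prove the latter by a partition of unity which, near $Z$, reduces the question to the toy-model treated in Proposition~\ref{prop-2-2}.

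First I would dispose of $J$ and $\Lambda$, granting (\ref{eqn-2-10}). Fix a bounded extension $E\colon\sH^{\sigma+\frac32}(Y)\to\sH^{\sigma+2}(X)$ of the ordinary trace operator ($Z$ is harmless on this Sobolev scale). If $v$ is the restriction of a $\sC^2$ function, then $w\coloneqq Ev-\Delta_\D^{-1}\Delta(Ev)$ is harmonic and agrees with $v$ on $Y$, so $w=Jv$; since $\Delta(Ev)\in\sH^\sigma(X)$, the bound (\ref{eqn-2-10}) gives $\|Jv\|_{\sH^{\sigma+2}(X)}\le C\|v\|_{\sH^{\sigma+\frac32}(Y)}$, which is (\ref{eqn-2-11}). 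Then $\Lambda v=-\partial_\nu Jv|_Y$ with $\nabla Jv\in\sH^{\sigma+1}(X)$, and the trace onto $Y$ lands in $\sH^{\sigma+\frac12}(Y)$ for $\sigma>-\tfrac12$ (the endpoint $\sigma=-\tfrac12$ being Proposition~\ref{prop-1-4}), which is (\ref{eqn-2-12}). Thus it remains to prove (\ref{eqn-2-10}).

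Next, choose a finite partition of unity $1=\sum_\iota\phi_\iota$ on $\bar X$ subordinate to a cover by (a) balls interior to $X$, (b) half-balls centred at regular boundary points, and (c) neighbourhoods of points of $Z$. On pieces of type (a) and (b) the standard interior and half-space elliptic estimates give $\|\phi_\iota u\|_{\sH^{\sigma+2}(X)}\le C\bigl(\|\Delta u\|_{\sH^\sigma(X)}+\|u\|_{\sH^{\sigma+1}(X)}\bigr)$; together with the energy bound $\|u\|_{\sH^1(X)}\le C\|\Delta u\|_{\sH^{-1}(X)}$ (Poincar\'e plus the Dirichlet condition) and interpolation, this yields (\ref{eqn-2-10}) restricted to those pieces. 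On a piece of type (c) I would straighten $Z$ and the two leaves of $Y$, so that locally $X=Z\times\cX$ with $\cX$ a planar angle of (variable) aperture $\alpha(z)$ and a metric that is $\sC^\infty$ on each leaf but whose $s$-derivative may jump across $Z$; freezing the metric and the aperture at the centre of the chart turns the operator into the toy-model dihedral angle, for which Proposition~\ref{prop-2-2} supplies $\Delta_\D^{-1}\colon\sH^\sigma\to\sH^{\sigma+2}$ in the asserted range of $\sigma$. It is precisely here that the restriction $\sigma<\bar\sigma=\pi/\bar\alpha-1$ enters, reflecting the singular exponents $r^{\pi n/\alpha}$ at the edge; for an inner edge $\bar\alpha<\pi$ one has $\bar\sigma>0$, so no restriction beyond $\sigma\in[-\tfrac12,0]$ survives, which is case~\ref{prop-2-6-i}. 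The difference between the true operator and the frozen one is of lower order plus a coefficient made small in operator norm by shrinking the chart; a routine absorption argument, using the global energy bound to dominate the lower-order error, closes the estimate on the (c)-pieces, and summing over $\iota$ gives (\ref{eqn-2-10}).

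The only genuinely non-routine ingredient is the edge model itself, i.e. Proposition~\ref{prop-2-2}: one must know that for the flat dihedral angle the Dirichlet resolvent gains two derivatives on $\sH^\sigma$ exactly when $\sigma<\bar\sigma$, and that the merely Lipschitz metric across $Z$ does not degrade this---the jump being tangential to $Z$, the leading singularity at the edge is still governed by the two-dimensional angle $\alpha(z)$. Granting Proposition~\ref{prop-2-2}, the rest is the standard freeze-coefficients-and-patch argument, which is why the statement is asserted to follow easily.
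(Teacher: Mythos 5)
The paper gives no proof of this proposition beyond the phrase ``one can prove easily,'' and your argument is exactly the intended one: reduce \textup{(\ref{eqn-2-11})} and \textup{(\ref{eqn-2-12})} to \textup{(\ref{eqn-2-10})} via an extension operator (with Proposition~\ref{prop-1-4} covering the endpoint $\sigma=-\frac12$), then obtain \textup{(\ref{eqn-2-10})} by localizing and freezing coefficients so that the edge charts are governed by the toy-model Proposition~\ref{prop-2-2}, whence the threshold $\bar\sigma=\pi/\bar\alpha-1$. Your sketch is correct and complete at the level of detail the paper itself demands; the only imprecision worth flagging is that the Lipschitz (jumping) metric lives on $Y$ in the coordinate $s$, not on $X$ where the Dirichlet problem is solved, so it affects the interpretation of the boundary Sobolev spaces in \textup{(\ref{eqn-2-11})}--\textup{(\ref{eqn-2-12})} rather than the elliptic estimate \textup{(\ref{eqn-2-10})} itself.
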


One can also prove easily

\begin{proposition}\label{prop-2-7}
In the general case, assuming that $Z=\{x\colon x_1=x_2=0\}$ and $X=Z\times \cX$ with a planar angle 
$\cX\ni (x_1,x_2)$ of solution $\in (0,\pi)\cup(\pi,2\pi)$
\begin{equation}
\Lambda- \Delta_Y^{1/2}  = b+ \sum_{j+k\le 1} D_s^j K_{jk}D_s^k\,,
 \label{eqn-2-13}
\end{equation}
where $b$ is a bounded operator and operators $K_{jk}$ have Schwartz kernels and
\begin{multline}
K_{j,k}(x',s; y', s')=\\
(2\pi)^{1-d} \iint |\xi'|^{2-j-k} \bar{K}_{jk}\bigl(\frac{1}{2}(x'+y'), s|\xi'|,\, s'|\xi'|\bigr)\, e^{-i\langle x'-y',\xi'\rangle}\,d\xi'.
\label{eqn-2-14}
\end{multline}
\end{proposition}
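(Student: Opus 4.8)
The plan is to reduce the general case to the toy-model analysis of Section~2.1 (Proposition~\ref{prop-2-3} and Corollary~\ref{cor-2-4}) by a localization-and-freezing argument along the edge $Z$. First I would straighten things out near $Z$: using the assumption \textup{(\ref{eqn-2-9})} write $X = Z\times\cX$ and introduce the coordinate $s$ on $Y$ together with the transversal variable measuring distance to $Z$ inside $\cX$; in these coordinates the metric on $X$ is smooth on each of the two leaves and the induced metric on $Y$ is $\sC^\infty$ off $Z$ with a possible jump of the $s$-derivative across $Z$, exactly the regularity discussed right before Definition~\ref{def-2-5}. The point of the decomposition \textup{(\ref{eqn-2-13})} is that $\Delta_Y^{1/2}$ absorbs the ``main'' part of $\Lambda$, while the correction is governed by the same dihedral kernels $\bar K_{jk}$ of the toy-model, now with coefficients frozen at the midpoint $\frac12(x'+y')$ along $Z$ and with the remaining discrepancy thrown into the bounded operator $b$.

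The key steps, in order, are as follows. \emph{(1) Microlocalization in $Z$.} Using a partition of unity on $Z$ and a Fourier transform $F_{z\to\zeta}$ in the edge variable, freeze the dihedral angle $\alpha(z)$ and the metric coefficients at a reference point; Proposition~\ref{prop-2-6} guarantees that $J$ and $\Lambda$ act with the stated Sobolev mapping properties, so all error terms produced by this freezing gain a derivative and hence are bounded operators, to be collected into $b$. \emph{(2) Apply the toy-model.} On each frozen piece, after the change of variables $x''\mapsto |\zeta| x''$ used in \textup{(\ref{eqn-2-1})}, Proposition~\ref{prop-2-3} gives $\bar\Lambda - (D_s^2+1)^{1/2} = \sum_{j+k\le 1} D_s^j \bar K_{jk} D_s^k$ with kernel estimates \textup{(\ref{eqn-2-6})}. \emph{(3) Undo the rescaling and reassemble.} Reversing the dilation $x''\mapsto |\zeta|x''$ turns $(D_s^2+1)^{1/2}$ into (the frozen model of) $\Delta_Y^{1/2}$ and turns $\bar K_{jk}$ into the kernel \textup{(\ref{eqn-2-8})}; the extra homogeneity factor $|\xi'|^{2-j-k}$ is exactly what is needed so that $D_s^j K_{jk} D_s^k$ has the right order. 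Allowing the frozen parameters to depend on $z$ and symmetrizing the dependence to the midpoint $\frac12(x'+y')$ produces \textup{(\ref{eqn-2-14})}; the commutators and the difference between ``true $\Delta_Y^{1/2}$'' and ``frozen model of $\Delta_Y^{1/2}$'' are one order smoother and go into $b$.

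The main obstacle I expect is controlling the non-classical behaviour at the edge: because the metric on $Y$ is only Lipschitz across $Z$, neither $\Lambda$ nor $\Delta_Y^{1/2}$ is a classical pseudodifferential operator there, so the ``error is smoother'' heuristic of step (1) must be justified quantitatively using the precise decay/blow-up rates \textup{(\ref{eqn-2-6})} in $|s|,|s'|$ rather than by a symbol calculus. Concretely, one has to check that: the singularity of $\bar K_{jk}(s|\xi'|,s'|\xi'|)$ as $s,s'\to 0$ is integrable enough (after the $d\xi'$ integration) that $K_{jk}$ is a genuine Schwartz kernel of the claimed order; and that the commutator of the freezing cutoff with the non-local operator $\bar K_{jk}$, together with the jump of $\partial_s$ of the metric, really lands in the bounded class — here one leans on Proposition~\ref{prop-2-6}\ref{prop-2-6-ii}, i.e. on $\bar\sigma = \pi/\bar\alpha - 1 > -\tfrac12$, which is precisely why the degenerate angles $\pi$ and $2\pi$ were excluded in \textup{(\ref{eqn-2-9})}. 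Once these two points are in hand, the decomposition \textup{(\ref{eqn-2-13})}–\textup{(\ref{eqn-2-14})} follows by collecting terms.
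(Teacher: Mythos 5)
The paper offers no proof of Proposition~\ref{prop-2-7} at all --- it is prefaced only by ``One can also prove easily'' --- so the comparison here is against the strategy the paper implies rather than an explicit argument. Your sketch (freeze the metric and the angle $\alpha(z)$ along $Z$, reduce to the toy-model result of Proposition~\ref{prop-2-3} and Corollary~\ref{cor-2-4}, undo the dilation to recover the $|\xi'|^{2-j-k}\,\bar K_{jk}(s|\xi'|,s'|\xi'|)$ kernel with midpoint-frozen coefficients, and dump the one-order-smoother freezing errors into $b$) is exactly the route the section's structure points to, and you correctly identify the only delicate points: the non-classical behaviour of the kernels at $s,s'\to 0$ and the role of $\bar\sigma=\pi/\bar\alpha-1>-\tfrac12$, i.e.\ of excluding $\alpha=\pi,2\pi$ in \textup{(\ref{eqn-2-9})}.
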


\begin{remark}\label{rem-2-8}
On the distances $\gtrsim 1$ from $Z$, $b$ is a classical $0$-order pseudodifferential operator, on the distance $\gtrsim |\xi'|^{-1+\delta}$ it is a rough $0$-order pseudodifferential operator\footnote{\label{foot-3} I. e. with the symbols such that 
$|D_x^\alpha D_\xi^\beta|\le C_{\alpha\beta}  \rho^{-|\beta|}\gamma^{-|\alpha}$ with $\rho\gamma \ge h^{1-\delta}$, $\rho\gamma\ge h^{1-\delta}$. Here $\rho=1$, $\gamma= |x|$.}.
\end{remark}

\chapter{Microlocal analysis}
\label{sect-3}

\section{Propagation of singularities near edge}
\label{sect-3-1}

We are going to consider microlocal analysis near point 
$(\bar{x},\bar{\xi}'')\in T^*Z$ under assumption (\ref{eqn-2-9}). In our approach we use definition of operator $\Lambda$ rather than its description of the previous Section~\ref{sect-2}.  So, let $x=(x'';x')\in \bR^2\times \bR^{d-1}$.

\begin{proposition}\label{prop-3-1}
\begin{enumerate}[label=(\roman*), wide, labelindent=0pt]
\item\label{prop-3-1-i}
Let $q_j=q_j(\xi')$ ($j=1,2$) be two symbols, constant as $|\xi'|\ge C$. Assume that the 
$\dist (\supp (q_1),\supp(q_2))\ge \epsilon$. 

Consider $h$-pseudodifferential operators  $Q_j=q_j^\w(h^{-1}D')$, $j=1,2$. Then the operator norms of
\begin{phantomequation}\label{eqn-3-1}\end{phantomequation}
\begin{align}
&Q_1 \Delta_\D^{-1}Q_2:\sL^2(X)\to \sH^2(X),&&Q_1 JQ_2:\sH^{\frac{1}{2}}(Y)\to \sH^2(X),
\tag*{$\textup{(\ref*{eqn-3-1})}_{1,2}$}\label{eqn-3-1-1}\\
&Q_1\Lambda Q_2 :\sH^{1}(Y)\to \sL^2(Y)
\tag*{$\textup{(\ref*{eqn-3-1})}_{3}$}\label{eqn-3-1-3}
\end{align}
do not exceed $C'h^s$ with arbitrarily large $s$ where $\Delta_\D$ is an  operator $\Delta$ with zero Dirichlet boundary conditions on $Y$.
\item\label{prop-3-1-ii}
Let $Q_j(x')$ ($j=1,2$) be two functions. Then operators $\textup{(\ref{eqn-3-1})}_{1-3}$ are infinitely smoothing by $x'$. 

\end{enumerate}
\end{proposition}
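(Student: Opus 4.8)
All three operators are manufactured out of the Dirichlet problem for $\Delta$: $J$ sends boundary data to its $\Delta$-harmonic extension, $\Lambda v=-\partial_\nu Jv|_Y$, and $\Delta_\D^{-1}$ inverts $\Delta$ with vanishing trace, so each of them intertwines $\Delta$, the trace $\gamma:w\mapsto w|_Y$ and $\partial_\nu$ in the standard way. My plan is to prove \ref{prop-3-1-i} --- which says, in effect, that solving the Dirichlet problem for $\Delta$ is microlocal in $\xi'$ --- for $\Delta_\D^{-1}$ and for $J$ by one and the same commutator iteration, and then to read off the statement for $\Lambda$ by commuting $Q_1$ past $\gamma$ and $\partial_\nu$.

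Take $J$. Put $w=JQ_2v$, so $\Delta w=0$ in $X$ and $w|_Y=Q_2v$. Since the symbols $q_j$ depend on $\xi'$ only, $Q_1$ and $Q_2$ are Fourier multipliers in $x'$ and $Q_1Q_2=(q_1q_2)(h^{-1}D')=0$; hence $\Delta(Q_1w)=[\Delta,Q_1]w$ and $(Q_1w)|_Y=0$, that is, $Q_1w=\Delta_\D^{-1}[\Delta,Q_1]w$. Two features make this useful. First, $[\Delta,Q_1]$ is one order lower than $\Delta$ and, in the semiclassical scaling used near $Z$, carries a gain $h$: it only feels the $x'$-variation of the coefficients of $\Delta$ (which are smooth in $x'$), through the mixed $D'D''$ terms and the first-order part of $\Delta$. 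Second, the $\xi'$-symbol of $[\Delta,Q_1]$ is supported where $\nabla q_1\ne0$, hence inside $\supp q_1$, so $[\Delta,Q_1]=[\Delta,Q_1]\widetilde Q_1$ modulo $O(h^\infty)$ for any $\widetilde Q_1\equiv1$ near $\supp q_1$. Choosing a finite chain $q_1=q_1^{(0)}\prec q_1^{(1)}\prec\dots\prec q_1^{(N)}$ with each $\supp q_1^{(k)}$ still at distance $\ge\epsilon/2$ from $\supp q_2$, and iterating $Q_1^{(k)}w=\Delta_\D^{-1}[\Delta,Q_1^{(k)}]Q_1^{(k+1)}w+O(h^\infty)$, one gains a factor $h$ at each of the $N$ steps, estimates the remaining term by the mapping properties of $J$ and $\Delta_\D^{-1}$ from Proposition~\ref{prop-2-6}, and lets $N\to\infty$. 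The same computation with $w=\Delta_\D^{-1}Q_2f$ gives the bound for $\Delta_\D^{-1}$, and inserting $\gamma$ and $\partial_\nu$ --- whose commutators with $Q_1$ again have $\xi'$-symbol in $\supp q_1$ and a gain $h$ --- gives the bound for $\Lambda$.

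The one genuinely delicate point is to run this iteration without ever invoking a Sobolev gain that $\Delta_\D^{-1}$ does not possess: at an outer edge $\Delta_\D^{-1}$ maps $\sL^2(X)$ only into $\sH^s(X)$ with $s<1+\pi/\bar\alpha<2$, so not even one ``$\sL^2\to\sH^2$'' step is allowed. This is circumvented because that loss is produced solely by the edge-conormal modes $r^{\pi n/\bar\alpha}\sin(\pi n\theta/\bar\alpha)$, whose $\xi'$-wavefront sits over $\{\xi'=0\}$. One of $\supp q_1$, $\supp q_2$ is necessarily separated from $\xi'=0$: if it is $\supp q_2$, then $w$ carries no such singular component to begin with; if it is $\supp q_1$, then $Q_1$ annihilates that component of $w$. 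In either case every $\Delta_\D^{-1}$ occurring in the iteration acts on data microlocalised away from $N^*Z$, where the full elliptic gain of two derivatives holds. I expect this matching of the iteration with the edge regularity --- together with pinning down the semiclassical scaling that makes ``$[\Delta,Q_1]$ gains an $h$'' literally true --- to be the real work, the rest being routine bookkeeping.

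Finally, \ref{prop-3-1-ii} is obtained in the same spirit (and in fact follows from \ref{prop-3-1-i}): when $Q_1(x')$, $Q_2(x')$ have disjoint supports, $w=\Delta_\D^{-1}Q_2f$ satisfies $\Delta w=0$ and $w|_Y=0$ for $x'$ in a neighbourhood of $\supp Q_1$, and since near $Z$ both the edge and the boundary are of product type in $x'$ up to a smooth perturbation, elliptic regularity differentiated along $x'$ shows $w$, hence $Q_1w$, to be $C^\infty$ in $x'$ (valued in the $x''$-Sobolev space the edge permits); applying $\gamma$ and $\partial_\nu$ yields the corresponding statements for $J$ and $\Lambda$.
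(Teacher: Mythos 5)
Your core mechanism is the same as the paper's: exploit $Q_1Q_2=0$, convert $Q_1\Delta_\D^{-1}Q_2$ and $Q_1JQ_2$ into iterated commutators with $\Delta$, gain a factor $h$ per commutation, and deduce $\textup{(\ref{eqn-3-1})}_{2,3}$ from $\textup{(\ref{eqn-3-1})}_{1}$ by commuting through the trace and $\partial_\nu$. The paper runs this as $Q_j^{(n)}\coloneqq[Q_j^{(n-1)},\Delta]\Delta_\D^{-1}$ with $\|Q_j^{(n)}\|_{\sL^2\to\sL^2}\le Ch^n$; your nested chain of cutoffs $q_1^{(0)}\prec\dots\prec q_1^{(N)}$ is a cosmetic variant of the same induction, and your handling of the region near $\xi'=0$ (at least one support is separated from it) plays the role of the paper's ``WLOG $q_j$ constant near $0$''.

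The genuine gap is in the step you yourself identify as the real work. The edge-conormal modes do \emph{not} have $\xi'$-wavefront over $\{\xi'=0\}$: in the dihedral model, at tangential frequency $\xi'$ the Dirichlet solution carries the singular profile $a(\xi')\,(r|\xi'|)^{\pi/\alpha}\sin(\pi\theta/\alpha)\,\chi(r|\xi'|)$ --- the same $r^{\pi/\alpha}$ singularity, merely concentrated at $r\lesssim|\xi'|^{-1}$ --- and for $\alpha>\pi$ its $\sH^2(X)$-norm diverges for \emph{every} $\xi'$ with $a(\xi')\ne0$. So microlocalizing the data away from $\xi'=0$ does not restore $\Delta_\D^{-1}\colon\sL^2\to\sH^2$, and your iteration as arranged (with $\Delta_\D^{-1}$ applied on the left and required to produce two derivatives at each step) does not close. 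The repair, which is how the paper's induction is actually organized, is to keep $\Delta_\D^{-1}$ to the \emph{right} of each commutator and run the whole induction in $\sL^2(X)$, so that only $\Delta_\D^{-1}\colon\sL^2\to\sH^{s}$ for some $s>1$ (available at outer edges by Proposition~\ref{prop-2-6}) and the resulting $O(h)$ bound for $[\Delta,Q]\Delta_\D^{-1}$ on $\sL^2$ are ever used; the stated target spaces are recovered only once, at the end, by elliptic regularity applied to $\Delta\bigl(Q_1\Delta_\D^{-1}Q_2f\bigr)=[\Delta,Q_1]\Delta_\D^{-1}Q_2f=O(h^s)$. Even then the coefficient of the $r^{\pi/\alpha}$-mode of the result is only $O(h^s)$ rather than exactly zero, so the literal $\sH^2$ target at an outer edge still needs an argument; that residual issue is independent of the iteration and is passed over in silence by the paper's own proof as well.
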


\begin{proof}
\begin{enumerate}[label=(\roman*), wide, labelindent=0pt]
\item\label{pf-3-1-i}
Without any loss of the generality one can assume that $q_j$ are constant also in the vicinity of $0$. Then 
the operator norms of $[Q_j,\Delta]\Delta_\D^{-1}$ in $\sL^2(X)$ do not exceed $Ch$; replacing $Q_j$ by $Q_j^{(n)}$ with $Q_j^{(0)}=Q_j$ and $Q_j^{(n)}\coloneqq [Q_j^{(n-1)},\Delta]\Delta_\D^{-1}$ for $j=1,2,\ldots$, we prove by induction that the operator norms of $Q_j^{(n)}$ in in $\sL^2(X)$ do not exceed $Ch^n$. Then one can prove by induction easily that the operator norm of $\textup{(\ref*{eqn-3-1})}_{1}$ does not exceed $Ch^s$. 

Then one can prove easily that the operator norm of $\textup{(\ref{eqn-3-1})}_{2,3}$ do not exceed $Ch^s$ as  well. It concludes the proof of Statement~\ref{prop-3-1-i}.

\item\label{pf-3-1-ii}
Statement~\ref{prop-3-1-ii} is proven by the same way. 
\end{enumerate}
\vskip-\baselineskip\ 
\end{proof}

Let  $u(x,y,t)$ be Schwartz kernel of $e^{it\Lambda}$, $x,y\in Y$. 

\begin{proposition}\label{prop-3-2}
Consider $h$-pseudodifferential operator  $Q=q^\w(x',h^{-1}D')$ where $q$ vanishes $\{|\xi'|\le c_0\}$. Let  $\chi\in \sC_0^\infty (\bR)$, $T\ge h^{1-\delta}$. 
Then operator norms of 
$F_{t\to \tau} \chi_T(t) Q_x u$ and $F_{t\to \tau} \chi_T(t) u\,^t\!Q_y$
do not exceed $C'_Th^s$ for $\tau\le c$ for $c_0=c_0(c)$.
\end{proposition}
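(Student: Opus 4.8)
The plan is to reduce the statement to a standard finite-speed-of-propagation / wavefront-set estimate for the operator $\Lambda$, exploiting that on the support of $q$ we are in the region $|\xi'|\ge c_0$ where, by Proposition~\ref{prop-2-7} and Remark~\ref{rem-2-8}, $\Lambda$ is (microlocally, away from $Z$, and up to the rough-$\Psi$DO part near $Z$) comparable to $\Delta_Y^{1/2}$ with principal symbol $\sim|\xi'|$. The restriction $\tau\le c$ then forces the energy $\tau$ to be separated from the characteristic set $\{\tau=|\xi'|\}$ of $e^{it\Lambda}$, so $F_{t\to\tau}\chi_T(t)Qu$ should be negligible. Concretely, I would work with the semiclassical parameter $h$ tied to $|\xi'|^{-1}$ as in Proposition~\ref{prop-3-1}: on $\supp q$, $\Lambda$ rescales to an operator $h\Lambda$ whose symbol is $\asymp 1$ and bounded below, while $h\tau\le hc\le$ (something small once $c_0=c_0(c)$ is chosen large, since $h\le c_0^{-1}$ there).

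First I would insert a microlocal cutoff: write $Q=Q(1-Q_0)+QQ_0$ where $Q_0=q_0^\w(x',h^{-1}D')$ is supported in $\{|\xi'|\le 2c_0\}$ and equals $1$ on $\{|\xi'|\le c_0\}$; since $q$ vanishes on $\{|\xi'|\le c_0\}$, the commutator/composition calculus (Proposition~\ref{prop-3-1}\ref{prop-3-1-i}, applied with symbols having disjoint supports) shows $QQ_0$ contributes only an $O(h^s)$ error, so it suffices to treat $Q(1-Q_0)$, i.e.\ we may assume $q$ is supported in $\{|\xi'|\ge c_0\}$ outright and ask for $\tau\le c$ with $c_0$ large relative to $c$. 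Second, I would use the equation: $u(x,y,t)=e^{it\Lambda}$ satisfies $(hD_t - h\Lambda)u=0$, so $(h\tau - h\Lambda)\,F_{t\to\tau}(\chi_T u)=h\,F_{t\to\tau}((D_t\chi_T)u)$, and the right-hand side has an extra factor $h/T\le h^{\delta}$ in norm. On $\supp q$, the operator $h\Lambda$ has symbol $\asymp|\xi'|h\asymp 1$ and $\ge c_1>0$ (using Proposition~\ref{prop-2-7}: the leading term $\Delta_Y^{1/2}$ gives $\asymp 1$, the $K_{jk}$ terms and $b$ are lower order / bounded after the rescaling and can be absorbed, including the rough-$\Psi$DO part on distances $\gtrsim|\xi'|^{-1+\delta}$ from $Z$, and the contribution from the $|\xi'|^{-1+\delta}$-neighbourhood of $Z$ is handled by Proposition~\ref{prop-3-1}\ref{prop-3-1-ii}, which makes everything smoothing in $x'$ there). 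Hence for $h\tau\le hc<c_1$, the symbol of $h\tau-h\Lambda$ is elliptic and bounded away from zero on $\supp q$, so I can construct a parametrix $(h\tau-h\Lambda)^{-1}Q$ modulo $O(h^\infty)$ in the appropriate operator norms, giving $Q\,F_{t\to\tau}(\chi_T u)=O(h/T)=O(h^\delta)$, and iterating the parametrix construction upgrades $O(h^\delta)$ to $O(h^s)$ for any $s$. The same argument applied to the adjoint (using $e^{it\Lambda}$ self-adjointness structure, $(e^{it\Lambda})^*=e^{-it\Lambda}$) handles $u\,^t\!Q_y$.

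The main obstacle is making the ellipticity argument for $h\tau - h\Lambda$ rigorous uniformly down to $Z$, since there $\Lambda$ is not a classical $\Psi$DO and the decomposition \textup{(\ref{eqn-2-13})} has the singular kernels $K_{jk}$ (with the bounds \textup{(\ref{eqn-2-6})}) plus the rough-$\Psi$DO $b$. The right way to handle this is a partition in the distance $\rho=\dist(x',Z)$: for $\rho\gtrsim 1$ use the classical calculus; for $|\xi'|^{-1+\delta}\lesssim\rho\lesssim 1$ use the rough-$\Psi$DO calculus of Remark~\ref{rem-2-8} (with $\gamma=\rho$, $\rho\gamma\ge h^{1-\delta}$), in which the parametrix construction still works with losses controlled by powers of $h^\delta$; and for $\rho\lesssim|\xi'|^{-1+\delta}$ invoke Proposition~\ref{prop-3-2}\ref{prop-3-1-ii}-type smoothing in $x'$ — but here one must be a little careful since $Q=q^\w(x',h^{-1}D')$ now genuinely depends on $x'$, so instead I would bound this contribution directly: the rescaled kernel estimates \textup{(\ref{eqn-2-6})}, \textup{(\ref{eqn-2-8})} show the $K_{jk}$ part is integrable and small in $L^2\to L^2$ on that thin set, and the $D_s^j, D_s^k$ factors are compensated by the $(|s|+|s'|)$-weights. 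Assembling these three regimes with a smooth partition of unity in $\rho$, and tracking that each patch contributes $O(h^s)$, completes the estimate.
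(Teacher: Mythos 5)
Your overall strategy is the same as the paper's: after microlocalizing to $\{|\xi'|\ge c_0\}$, view $F_{t\to\tau}(\chi_T u)$ as a solution of $(\tau-\Lambda)F_{t\to\tau}(\chi_T u)=F_{t\to\tau}((D_t\chi_T)u)$ and exploit ellipticity of $D_t-\Lambda$ on $\{|\xi'|\ge c_0,\ \tau\le c\}$ together with the locality supplied by Proposition~\ref{prop-3-1}. The difference is in how the ellipticity (i.e.\ the microlocal lower bound $\Lambda\gtrsim|\xi'|$) is justified, and here your version has a gap. You try to read it off from the decomposition \textup{(\ref{eqn-2-13})}, asserting that the $K_{jk}$ terms and $b$ ``are lower order\,/\,bounded after the rescaling and can be absorbed.'' That is not so: the terms $D_s^jK_{jk}D_s^k$ with $j+k=1$ are first order, exactly the same order as $\Delta_Y^{1/2}$, and the kernel bounds \textup{(\ref{eqn-2-6})} give no smallness; whether the full operator $\Lambda$ remains bounded below by $\epsilon|\xi'|$ in the $|\xi'|^{-1+\delta}$-neighbourhood of $Z$ is precisely the nontrivial point, and your three-regime patching in $\rho=\dist(x',Z)$ does not actually establish it (your own innermost regime is left as a claim that the $K_{jk}$ contribution is ``integrable and small,'' which does not follow from \textup{(\ref{eqn-2-6})} for the first-order terms).

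The paper closes this gap by a different and much shorter route: the a priori estimate \textup{(\ref{eqn-1-5})} of Proposition~\ref{prop-1-4}, $\|\Lambda u\|_Y+\|u\|_Y\asymp\|u\|_{\sH^1(Y)}$, holds globally, uniformly up to the edge, and immediately yields $\|\Lambda u\|\ge(\epsilon c_0h^{-1}-C)\|u\|$ for $u$ microlocalized in $\{|\xi'|\ge c_0\}$; combined with nonnegativity of $\Lambda$ and the pseudolocality of Proposition~\ref{prop-3-1} this gives ellipticity of $D_t-\Lambda$ for $\tau\le c$ once $c_0=c_0(c)$ is large, with no symbolic analysis near $Z$ at all. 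I recommend you replace your use of \textup{(\ref{eqn-2-13})} by \textup{(\ref{eqn-1-5})}; the rest of your argument (the cutoff $Q_0$, the commutation of $F_{t\to\tau}\chi_T$ with the equation producing the $O(h/T)$ gain, the iteration of the parametrix, and the adjoint argument for $u\,^t\!Q_y$) is sound and matches the intended proof.
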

 
\begin{proof}
One need to consider $v=e^{it\Lambda}f$, $f\in \sH^1 (Y)$, $\|f\|_{\sL^2(Y)}=1$ and observe that it satisfies 
$(D_t-\Lambda )v=0$.
Using (\ref{eqn-1-5}) we see that  operator $(D_t-V)$ is elliptic in $\{|\xi'|\ge c_0, \tau\le c\}$ while Proposition~\ref{prop-3-1} ensures its locality.
\end{proof}

Therefore, in what follows

\begin{remark}\label{rem-3-3}
Studying energy levels $\tau\le c$ we can always apply cut-out domain $\{|\xi'|\ge c_0\}$.
\end{remark}

Now we can study the propagation of singularities. Let us prove that the propagation speed with respects to $x$ and $\xi'$ do not exceed $C_0$. For this and other our analysis we need the following Proposition~\ref{prop-3-4}: 

\begin{proposition}\label{prop-3-4}
For $h$-pseudodifferential operator $Q=q^\w(x,hD')$ the following formula onnecting commutators $[\Delta,Q]$ and $[\Lambda +\partial_\nu, Q]$ holds:
\begin{equation}
-\Re i([\Delta,Q]Jv,Jv)_X= \Re i(([\Lambda,Q]+[\partial_\nu,  Q])v,v)_Y
\label{eqn-3-2}
\end{equation}
\end{proposition}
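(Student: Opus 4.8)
The plan is to derive \textup{(\ref{eqn-3-2})} from the very definition of $\Lambda$ as a Dirichlet-to-Neumann operator, i.e. from the Green-type identity relating $\Delta$ in $X$ to $\Lambda$ on $Y$, rather than from the description in Section~\ref{sect-2}. Since $Jv$ is harmonic in $X$ with $Jv|_Y=v$ and $\Lambda v=-\partial_\nu Jv|_Y$, the basic relation is that for harmonic $w$ one has $(\nabla w,\nabla \phi)_X = -(\partial_\nu w,\phi)_Y = (\Lambda (w|_Y),\phi|_Y)_Y$ for suitable test fields $\phi$; equivalently $(\Delta \phi, w)_X = (\partial_\nu \phi, w)_Y - (\phi,\partial_\nu w)_Y$ when $\Delta w=0$. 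First I would fix $v$ in the class for which everything is defined ($\sC^2$ restriction, or by density $v\in \sH^1(Y)$ using Proposition~\ref{prop-1-4} and Proposition~\ref{prop-2-6}), set $w=Jv$, and compute $(\Delta (QJv), Jv)_X$ by two different integrations by parts.

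The key computational step is to expand $[\Delta, Q]Jv = \Delta(QJv) - Q\Delta(Jv) = \Delta(QJv)$, using $\Delta Jv = 0$. Hence $([\Delta,Q]Jv, Jv)_X = (\Delta(QJv), Jv)_X$. Now apply Green's formula on $X$ to the pair $(QJv, Jv)$: since $Jv$ is harmonic, $(\Delta(QJv),Jv)_X = (\partial_\nu (QJv), Jv)_Y - (QJv, \partial_\nu Jv)_Y$ — here one must be a little careful because $Q=q^\w(x,hD')$ acts on $X$ but differentiates only in the $x'$ (tangential, edge) directions, so it preserves the boundary structure and $QJv|_Y$, $\partial_\nu(QJv)|_Y$ make sense. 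On $Y$ we write $\partial_\nu(QJv)|_Y = [\partial_\nu, Q]Jv|_Y + Q\partial_\nu Jv|_Y = [\partial_\nu,Q]v - Q\Lambda v$, and $QJv|_Y = Qv$, while $\partial_\nu Jv|_Y = -\Lambda v$. Substituting, $(\Delta(QJv),Jv)_X = ([\partial_\nu,Q]v - Q\Lambda v, v)_Y + (Qv, \Lambda v)_Y$. Then take $\Re(i\,\cdot\,)$ of both sides: the cross terms recombine, using $([\Lambda,Q]v,v)_Y = (\Lambda Qv,v)_Y - (Q\Lambda v,v)_Y$ and the self-adjointness of $\Lambda$ (Proposition~\ref{prop-1-2}\ref{propo-1-2-i}) to move $\Lambda$ across in $(Qv,\Lambda v)_Y = (\Lambda Q v, v)_Y$, so that $\Re i\bigl[(Qv,\Lambda v)_Y - (Q\Lambda v,v)_Y\bigr] = \Re i([\Lambda,Q]v,v)_Y$. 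This yields precisely $-\Re i([\Delta,Q]Jv,Jv)_X = \Re i\bigl(([\Lambda,Q]+[\partial_\nu,Q])v,v\bigr)_Y$ after checking the overall sign (it comes out negative on the left because of how $\partial_\nu$, the \emph{inner} normal, enters Green's formula).

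The main obstacle is justifying the boundary integrations by parts in the presence of edges: $Jv$ is only in $\sH^\sigma_\loc$ near $Z$ with $\sigma$ bounded by $\pi/\bar\alpha$, and $QJv$ has the same regularity limitations, so a naive application of Green's formula on $X$ is not licensed and the boundary trace $\partial_\nu(QJv)|_Y$ needs interpretation in a dual sense. I would handle this by a cut-off/approximation argument: insert a cutoff $\psi_\varepsilon$ vanishing near $Z$, establish the identity for $\psi_\varepsilon Jv$ where all terms are classical, and then pass to the limit $\varepsilon\to 0$, using the mapping properties \textup{(\ref{eqn-2-10})}--\textup{(\ref{eqn-2-12})} (equivalently \textup{(\ref{eqn-1-5})}) to control the error terms supported near the edge — the quadratic form $\int_X|\nabla(\cdot)|^2$ and the boundary pairings are continuous on $\sH^1(Y)\times\sH^1(Y)$ and $\sH^1(X)$ respectively, which absorbs the $O(\|\cdot\|^2_{\sH^1})$-type losses. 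One should also verify that $[\partial_\nu, Q]$ makes sense as an operator $\sH^1(Y)\to\sL^2(Y)$ near the edge; since $Q$ differentiates only tangentially to $Z$ and $\partial_\nu$ is, in the $s$-coordinate picture of Section~\ref{sect-2-2}, transversal, the commutator is lower order and bounded, with the only subtlety being the Lipschitz (not smooth) metric across $Z$, which is harmless for a first-order commutator. Once this limiting procedure is in place, the algebraic identity above gives the claim.
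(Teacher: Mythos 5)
Your proof is the same as the paper's: the paper also starts from $0=(Q\Delta w,w)_X$ with $w=Jv$, applies Green's formula on $X$, substitutes $\partial_\nu Qw|_Y=[\partial_\nu,Q]v+Q\partial_\nu w|_Y$ and $\partial_\nu w|_Y=-\Lambda v$, and recombines using the symmetry of $\Lambda$ and of the Weyl quantization $Q$; your extra cut-off/limiting discussion near $Z$ is more careful than the paper, which does not address edge regularity at all. One caveat on the step you defer to ``checking the overall sign'': with the Green's formula exactly as you state it ($(\Delta\phi,w)_X=(\partial_\nu\phi,w)_Y-(\phi,\partial_\nu w)_Y$ for harmonic $w$, positive $\Delta$, inner normal $\nu$), your algebra gives the exact identity $([\Delta,Q]Jv,Jv)_X=(([\Lambda,Q]+[\partial_\nu,Q])v,v)_Y$, so applying $\Re i$ to both sides yields \textup{(\ref{eqn-3-2})} \emph{without} the minus sign on the left; the stated sign appears in the paper only because its displayed computation writes the boundary terms of Green's formula with the opposite signs, so the minus sign cannot be recovered from your (correctly signed) identity by any amount of checking, and you should either adopt the paper's sign convention explicitly or record the identity in the form your computation actually produces.
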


\begin{proof}
First, consider real valued symbol $q=q(x,\xi')$ and $Q=q^\w(x,hD')$ its Weyl quantization. Let $v$ denote just any function on $Y$ and $V$ its continuation as a harmonic function. Then for $w=Jv$
\begin{multline*}
0=(Q\Delta w,w)_X= (\Delta Qw,w) -([\Delta ,Q]w,w)_X=\\
-([\Delta ,Q]w,w)_X+ (Qw,\Delta w)_X - (\partial_\nu Qw, w)_Y + (Qw, \partial _\nu w)_Y= \\
-([\Delta ,Q]w,w)_X - ( Q\partial_\nu w, w)_Y -( [\partial_\nu,Q] w, w)_Y + (Qw, \partial _\nu w)_Y=\\
-([\Delta ,Q]w,w)_X + ( Q \Lambda v, v)_Y -( [\partial_\nu,Q] v, v)_Y - (v, Q\Lambda v)_Y=\\
-([\Delta ,Q]w,w)_X - (\Lambda Q v,v)_Y-( [\partial_\nu,Q] v, v)_Y,
\end{multline*}
which implies (\ref{eqn-3-2}).
\end{proof}

 Now we can prove that at energy levels $\tau\le c$ the propagation speed with respects to $x$ and $\xi'$ do not exceed $C_0=C_0(c)$.

\begin{proposition-foot}\footnotetext{\label{foot-4} Cf. Theorem~\ref{monsterbook-thm-8-5-6}\ref{monsterbook-thm-8-5-6-i} of \cite{monsterbook}.}\label{prop-3-5}
Let $Q_j=q_j^\w (x,hD')$ and $\dist (\supp (q_1),\supp (q_2))\ge C_0 T$ with fixed $T>0$. Let $\chi\in \sC_0^\infty ([-1,1])$. Then for $\tau \le c$
\begin{equation}
|F_{t\to h^{-1}\tau} \bigl( \chi_T(t) Q_{1x} u \,^t\!Q_{2y} \bigr)|\le Ch^m,
\label{eqn-3-3}
\end{equation}
where here and below  $m$ is an arbitrarily large exponent and $C=C_m$.
\end{proposition-foot}

\begin{proof}
\begin{enumerate}[label=(\roman*), wide, labelindent=0pt]
\item\label{pf-3-5-i}
The proof is the standard one for propagation with respect to $(x',\xi')$: we consider $\phi (x',\xi',t)$ and prove that under the microhyperbolicity condition 
\begin{gather}
\phi_t -\{ |\xi'|,\phi \}\ge \epsilon_0,
\label{eqn-3-4}\\
\shortintertext{which is equivalent to} 
2\phi_t - |\xi'|^{-1}\{|\xi'|^2,\phi \}\ge 2\epsilon_0|\xi'|,
\label{eqn-3-5}
\end{gather}
our standard propagation theorem (see Theorem~\ref{monsterbook-thm-2-1-2} of \cite{monsterbook})
holds, just repeating arguments of its proof,  using equality (\ref{eqn-3-2}) and the fact that 
$\|Jv\|_{\sH^{1/2}(X)} \asymp \|v\|_{\sL^2(Y)}$.

Then we plug $\phi (x',\xi',t)=\psi(x',\xi') -t$ with $|\nabla_{x',\xi'} \psi |\le \epsilon_0$, and prove that (\ref{eqn-3-3}) for $q_j=q_j(x',\xi')$.

\item\label{pf-3-5-ii}
We need also prove that the propagation speed with respect to $(x_1,x_2)$\,\footnote{\label{foot-5} Or, equivalently,  with respect to $s$.} does not exceed $C_0$, but it is easy since  for $|s|\ge \epsilon$,  $\Lambda$ is a first-order pseudodifferential operator with the symbol $|\xi|$.
\end{enumerate}
\vskip-\baselineskip\ 
\end{proof}

\begin{remark}\label{rem-3-6}
In fact, it follows from the proof, that the propagation speed with respect to $x'$ do not exceed $C_0$, and the propagation speed with respect to $\xi'$ does not exceed $C_0|\xi'|$ with $C_0$, which does not depend on restriction $\tau\le c$. Meanwhile, the propagation speed with respect $(x_1,x_2)$ does not exceed $1$. 
\end{remark}

Next we prove that  at energy levels $\tau=1$ the propagation speed with respects to $x'$ in the vicinity of $(0,\bar{\xi}')$ with $|\bar{\xi}'|\ge\epsilon_0$ is at least $\epsilon_1=\epsilon_1(\epsilon_0)$.

\begin{proposition-foot}\footnotetext{\label{foot-6} Cf. Theorem~\ref{monsterbook-thm-8-5-6}\ref{monsterbook-thm-8-5-6-ii} of \cite{monsterbook}.}\label{prop-3-7}
Let $Q_j=q_j^\w (x,hD')$ and 
\begin{equation*}
\dist_{x'} (\supp (q_1),\supp (q_2))\le \epsilon_1T
\end{equation*} 
with fixed $T>0$.  Let $\chi\in \sC_0^\infty ([-1,-\frac{1}{2}]\cup [-\frac{1}{2},-1])$. Then for $|\tau-1|\le \epsilon_0$ \textup{(\ref{eqn-3-3})} holds.
\end{proposition-foot}

\begin{proof}
After propagation theorem mentioned in the proof of Proposition~\ref{prop-3-5} is proven we just plug $\phi (x',\xi'.t)=\psi (x',\xi')-\epsilon t$ with $\xi'\cdot \nabla_{x'} \psi \ge 1$.
\end{proof}

\begin{corollary-foot}\footnotetext{\label{foot-7} Cf. Corollary~\ref{monsterbook-cor-8-5-7}\ref{monsterbook-cor-8-5-7-ii} of \cite{monsterbook}.}\label{cor-3-8}
  In the framework of Proposition~\ref{prop-3-5} consider  $|\tau -1|\le \epsilon$. Then 
 \begin{align}
 &|F_{t\to h^{-1}\tau} \Gamma_{x} \chi_T(t)  u \,^t\!Q_y|\le Ch^{1-d+m}T^{-m}
 \label{eqn-3-6}\\
 \shortintertext{and}
& |F_{t\to h^{-1}\tau} \Gamma_{x} \bigl(\bar{\chi}_{T'}(t)- \bar{\chi}_{T}(t)\bigr) u \,^t\!Q_y|
\le Ch^{1-d+m}T^{-m},
\label{eqn-3-7}
  \end{align}
provided 
$\chi\in \sC_0^\infty ([-1,-\frac{1}{2}]\cup[\frac{1}{2},1])$,  $\bar{\chi}\in \sC_0^\infty ([-1,1])$, $\bar{\chi}=1$ on $[-\frac{1}{2},\frac{1}{2}]$, $h\le T\le T'\le T_0$ with small constant $T_0$.
\end{corollary-foot}

\begin{proof}
For small constant $T$ (\ref{eqn-3-6}) follows directly from Proposition~\ref{prop-3-7}, after one proves easily that we can insert $D_{x''}$, $D_{y''}$ to the corresponding estimate, which is easy.

For $h\le T\le T_0$ we use just rescaling like in the proof of Theorem~\ref{monsterbook-thm-2-1-19} of \cite{monsterbook}.
Finally, (\ref{eqn-3-7}) is obtained by the summation with respect to partition of unity with respect to $t$.
\end{proof}   

This implies immediately

\begin{corollary}\label{cor-3-9}
$\N_h (\tau)$ and $\N_h (\tau) *\tau_+^{\sigma-1}$ are approximated by the corresponding Tauberian expressions with $T\asymp h^{1-\delta}$  with errors 
$O(h^{1-d})$ and $O(h^{1-d+\sigma})$ respectively (as $\tau= 1$ and $h\to +0$).
\end{corollary}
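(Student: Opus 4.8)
The plan is to derive this statement as a routine Tauberian consequence of the propagation estimates established in Corollary~\ref{cor-3-8}, following the standard scheme (e.g. Section~\ref{monsterbook-thm-2-1-19} of \cite{monsterbook}). Recall that the Tauberian method expresses $\N_h(\tau)$ (and its Riesz means $\N_h(\tau)*\tau_+^{\sigma-1}$) in terms of $\int^\tau F_{t\to h^{-1}\tau'}\bigl(\bar\chi_T(t)\,\Gamma_x u\bigr)\,d\tau'$, i.e. the trace of $e^{it\Lambda}$ localized in time to $|t|\lesssim T$, with the error controlled by the contribution of times $T\le |t|\le T_0$ together with a tail term. So the argument has three ingredients to assemble.

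First I would set up the Tauberian framework: since by Remark~\ref{rem-3-3} we may freely insert the cutoff $\{|\xi'|\ge c_0\}$ when studying $\tau\le c$, and since on that region $\Lambda$ together with $e^{it\Lambda}$ is amenable to the microlocal machinery, the standard Tauberian theorem (Theorem~\ref{monsterbook-thm-2-1-2} of \cite{monsterbook} in its Tauberian form) applies. This reduces the claim to showing that the ``remote-time'' contribution, namely $F_{t\to h^{-1}\tau}\Gamma_x\bigl(\bar\chi_{T'}(t)-\bar\chi_T(t)\bigr)u$, is $O(h^{1-d})$ — actually better — uniformly for $\tau$ near $1$ and for $h^{1-\delta}\le T\le T'\le T_0$, and similarly with the appropriate extra power of $h$ for the Riesz-mean version.

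Second, this remote-time bound is exactly what Corollary~\ref{cor-3-8} delivers: estimate \textup{(\ref{eqn-3-7})} gives $|F_{t\to h^{-1}\tau}\Gamma_x(\bar\chi_{T'}-\bar\chi_T)u|\le Ch^{1-d+m}T^{-m}$ for any $m$, so taking $T\asymp h^{1-\delta}$ we get $O(h^{1-d+m-m(1-\delta)})=O(h^{1-d+m\delta})$, which is $O(h^{1-d})$ with room to spare; for the Riesz means one gains the additional $h^\sigma$ automatically from integrating $\tau^{\sigma-1}$ against the energy variable, exactly as in the smooth case. The point is that all propagation estimates near the edge $Z$ have already been proven (Propositions~\ref{prop-3-5}, \ref{prop-3-7} and Corollary~\ref{cor-3-8}), so no new microlocal analysis is needed here — only bookkeeping of powers.

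The main obstacle, and the only place requiring care, is patching together the edge region (where Corollary~\ref{cor-3-8} operates) with the region $|s|\gtrsim 1$ away from $Z$, where by Remark~\ref{rem-1-5}\ref{rem-1-5-i} and Proposition~\ref{prop-3-5}\ref{pf-3-5-ii} the operator $\Lambda$ is an honest first-order classical pseudodifferential operator with symbol $|\xi|$ and the classical Tauberian estimate with remainder $O(h^{1-d})$ is textbook. One uses a partition of unity in $x$ subordinate to a cover of $Y$ by an edge neighbourhood and its complement; Proposition~\ref{prop-3-1}\ref{prop-3-1-ii} guarantees that the off-diagonal cross terms in $\Gamma_x u$ between the two pieces are negligible ($O(h^\infty)$), so the two contributions add with no interaction and the combined Tauberian remainder is $O(h^{1-d})$, resp. $O(h^{1-d+\sigma})$ for $\N_h(\tau)*\tau_+^{\sigma-1}$. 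Assembling these pieces yields the Corollary.
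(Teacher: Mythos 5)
Your proposal is correct and follows exactly the route the paper intends: the paper offers no written proof, introducing the corollary with ``This implies immediately'' after Corollary~\ref{cor-3-8}, and the standard Tauberian bookkeeping you supply (estimate \textup{(\ref{eqn-3-7})} to kill the contribution of times between $T\asymp h^{1-\delta}$ and $T_0$, plus the routine patching of the edge neighbourhood with the smooth region via Proposition~\ref{prop-3-1}) is precisely the argument being invoked implicitly.
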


\section{Reflection of singularities from the edge}
\label{sect-3-2}

The results of the previous subsection are sufficient to prove \emph{sharp spectral asymptotics\/} (with the remainder estimate $O(\lambda^{d-1})$), which do not require conditions of the global nature, but insufficient to prove sharper spectral asymptotics (with the remainder estimate $o(\lambda^{d-1})$), which  require conditions of the global nature.

For this more ambitious purpose we need to prove that the singularities propagate along geodesic billiards on the boundary $Y$, reflecting and refracting on the edge $Z$ (so billiards will be branching), and the typical singularity (with $|\xi'|<\tau$) does not stick to $Z$.

To do this we will follow  arguments of Subsection~\ref{monsterbook-sect-8-5-4} of \cite{monsterbook}. Assuming (\ref{eqn-2-9}) consider operator $Q=x_1D_1+x_2D_2-i/2$, which acts along $Y$. As an operator in $\sL^2(Y)$ it is self-adjoint, as an operator in $\sL^2(X)$ it is not, but differs from a self-adjoint operator $Q=x_1D_1+x_2D_2-i$ by $i/2$, which does not affects commutators. 

As a result, repeating the proof of Proposition~\ref{prop-3-4} we arrive to
\begin{claim}\label{eqn-3-8}
Under assumption (\ref{eqn-2-9}) equality (\ref{eqn-3-2}) also holds for the operator $Q=x_1D_1+x_2D_2-i$.
\end{claim}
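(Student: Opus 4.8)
The claim to be established is that the identity
\begin{equation*}
-\Re i([\Delta,Q]Jv,Jv)_X= \Re i(([\Lambda,Q]+[\partial_\nu,  Q])v,v)_Y
\end{equation*}
continues to hold when $Q$ is the special radial-type operator $Q=x_1D_1+x_2D_2-i$ acting along $Y$ (equivalently the dilation generator $sD_s - i$), rather than a genuine $h$-pseudodifferential operator $q^\w(x,hD')$. So my plan is simply to rerun the computation in the proof of Proposition~\ref{prop-3-4} verbatim, checking at each step that the only structural facts used are those that remain valid for this $Q$.

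The plan is: first, set $w=Jv$ as before, so that $\Delta w=0$ in $X$ and $w|_Y=v$. Then write out the chain
\begin{equation*}
0=(Q\Delta w,w)_X=(\Delta Qw,w)_X-([\Delta,Q]w,w)_X,
\end{equation*}
and integrate by parts in the first term. The integration by parts producing the boundary terms $-(\partial_\nu Qw,w)_Y+(Qw,\partial_\nu w)_Y$ is Green's formula and holds for any (sufficiently regular) $w$; the point to verify is that $Q$ maps harmonic functions regular enough to apply Green's formula again, and that $Q$ restricted to $Y$ really is $x_1D_1+x_2D_2-i/2$ up to the harmless $i/2$ noted in the text — this is exactly the remark preceding the claim, that as an operator on $\sL^2(Y)$ the symmetric version is $sD_s-i/2$, while on $\sL^2(X)$ it is $sD_s-i$, the two differing by a constant that drops out of every commutator. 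Next, I would commute $\partial_\nu$ past $Q$ to turn $\partial_\nu Qw|_Y$ into $Q\partial_\nu w|_Y+[\partial_\nu,Q]w|_Y$, use $\partial_\nu w|_Y=-\Lambda v$ and $Qw|_Y=Qv$ (the tangential action), and collect terms; the term $(Q\Lambda v,v)_Y-(v,Q\Lambda v)_Y$ combines with $\Lambda$ being self-adjoint to give $-(\Lambda Qv,v)_Y$ exactly as in the smooth case. Taking $2i\,\Im$ (or $\Re i$ of the whole) then isolates the asserted identity.

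The one genuinely new point — and the place where I expect the only real work — is justifying the integrations by parts and the commutator manipulations near the edge $Z$, where $w=Jv$ is merely in $\sH^{\sigma+2}$ with $\sigma$ possibly below $0$ (Proposition~\ref{prop-2-6}), and where the coefficients $x_1,x_2$ of $Q$ vanish. The key observation is that $Q=x_1D_1+x_2D_2-i$ is tangent to the edge and to both leaves $Y_1,Y_2$ (it is the infinitesimal generator of dilations, which preserve the dihedral angle), so applying $Q$ does not worsen the edge regularity: $Qw$ lies in the same space as $w$ up to lower order, and the boundary pairings $(\cdot,\cdot)_Y$ are between a function and its normal derivative, both controlled by Proposition~\ref{prop-2-6}. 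Concretely I would argue by a cutoff $\psi_\varepsilon$ excising an $\varepsilon$-neighbourhood of $Z$, carry out all the manipulations for $\psi_\varepsilon w$ where everything is classical, and then let $\varepsilon\to 0$, showing the commutator $[\psi_\varepsilon, Q\ \text{or}\ \Delta]$-errors and the boundary contributions from $\{|s|=\varepsilon\}$ vanish because $|x''|$ weights in $Q$ beat the at-most-power singularity of $w$ guaranteed by the separation-of-variables analysis in Section~\ref{sect-2-1} (the singular modes behave like $r^{\pi n/\alpha}$ with $\pi/\alpha>1/2$ for the admissible angles). Once that limiting argument is in place, the algebraic identity is the same one already proven, so no further computation is needed.
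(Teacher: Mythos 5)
Your proposal is correct and follows the same route as the paper: the paper's argument is precisely to repeat the Green's-formula computation of Proposition~\ref{prop-3-4} for this $Q$, noting that the self-adjoint realizations on $\sL^2(X)$ and $\sL^2(Y)$ differ by the central constant $i/2$, which drops out of all commutators. Your additional cutoff argument near $Z$ to justify the integrations by parts is a legitimate (and welcome) elaboration of what the paper leaves implicit, but it is not a different method.
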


To apply arguments of the proof of Propositions~\ref{monsterbook-prop-8-5-9} and then \ref{monsterbook-prop-8-5-10} of \cite{monsterbook}, we need to check, if operator $i[\Lambda, Q]$ is positive definite, which in virtue of (\ref{eqn-3-2}) is equivalent to the same property for the form in the left:
\begin{equation}
\Re (i[\Delta, Q]w,w) -\Re (i[\partial_\nu ,Q]w,w)_Y \ge \epsilon \|\nabla w\|^2 \text{for\ \ } w\colon \Delta=0.
\label{eqn-3-9}
\end{equation}
For the toy-model $i[\Delta, Q]= 2(D_1^2+D_2^2)$, $i[\partial_\nu ,Q]= -\partial_\nu$, and 
the form on the left coincides with $\|\nabla w\|^2 - 2\|\nabla 'w\|^2$  on $w$ in question and therefore after Fourier transform $F_{x'\to\zeta}$ and change of variables $x_{1,2}$ it boils down to the inequality
\begin{equation}
\|\nabla w\|^2  -\|w\|^2 \ge \epsilon (\|\nabla w\|^2+\|w\|^2)\qquad \text{for\ \ } w\colon \Delta_2 w+w=0
\label{eqn-3-10}
\end{equation}
for two-dimensional $\Delta_2$, norms and scalar products.

This inequality is explored in Appendix~\ref{sect-A}, and in virtue of Proposition~\ref{cor-A-11} (\ref{eqn-3-10}) holds for $\alpha \in (\pi,2\pi)$. Meanwhile due to Proposition~\ref{prop-A-16} (\ref{eqn-3-10}) fails for $\alpha \in (0,\pi)$.

Therefore we arrive to

\begin{proposition-foot}\footnotetext{\label{foot-8} Cf. Proposition~\ref{monsterbook-prop-8-5-9} of \cite{monsterbook}.}\label{prop-3-10}
Consider two-dimensional toy-model (planar angle) with $\alpha \in (\pi, 2\pi)$.

Let $\psi \in \mathscr{C}^\infty _0 ([-1,1])$, $\psi_\gamma (x)=\psi (x/\gamma)$ and 
$\phi \in \mathscr{C}_0^\infty([-1,1])$, $\tau \ge 1+\epsilon_0$.  Then as
$\gamma\ge h^{1-\delta}$, $T\ge C_0\gamma$,
$h^\delta\ge \eta\ge h^{1-\delta}T^{-1}$
\begin{equation}
\| \phi (\eta^{-1} (hD_t-\tau)) \psi_\gamma  e^{it\Lambda}\psi_\gamma|_{t=T} \|
 \le CT^{-1}\gamma + Ch^{\delta'} .
\label{eqn-3-11}
\end{equation}
\end{proposition-foot}

\begin{proof}
Proof follows the proof of  Proposition~\ref{monsterbook-prop-8-5-9} of \cite{monsterbook} with $m=1$, and uses equality (\ref{eqn-3-2}) to reduce calculation of the commutator $[\Lambda,Q]$ to the calculation of the commutator $[\Delta,Q]$. \end{proof}

\begin{proposition-foot}\footnotetext{\label{foot-9} Cf. Proposition~\ref{monsterbook-prop-8-5-10} of \cite{monsterbook}.}\label{prop-3-11}
Consider $(d+1)$-dimensional toy-model (dihedral angle) with $\alpha \in (\pi, 2\pi)$.

Let $\psi \in \mathscr{C}^\infty _0 ([-1,1])$, $\psi_\gamma (x)=\psi (x_1/\gamma)$,
$\phi \in \mathscr{C}_0^\infty([-1,1])$, $\varphi \in \mathscr{C}_0^\infty(\mathbb{R}^{d-1})$ supported in $\{|\xi'|\le 1-\epsilon\}$ with $\epsilon>0$. Finally, let
$\gamma\ge h^{1-\delta}$, $T\ge C h^{-\delta}\gamma$,
$h^\delta\ge \eta\ge h^{1-\delta}T^{-1}$. Then
\begin{equation}
\|\phi (\eta^{-1} (hD_t -1)) \varphi (hD') \psi_\gamma (x_1) e^{it\Lambda}\psi_\gamma(x_1)|_{t=T}\| = O(h^m)
\label{eqn-3-12}
\end{equation}
with arbitrarily large $m$.
\end{proposition-foot}

\begin{proof}
Proof follows the proof of  Proposition~\ref{monsterbook-prop-8-5-10} of \cite{monsterbook} with $m=1$.  
\end{proof}

Now we can consider the general case. Consider a point $\bar{z}=(\bar{x},\bar{\xi'})\in T^*Z$, $|\bar{x}'|<1$. 

We can raise  to points
$\bar{z}^\pm=(\bar{x},\bar{\xi}^{\pm})\in T^*Y_{1,2}|_Z$ with $|\bar{\xi}^{\pm} |=1$ and $\iota \bar{z}^\pm =\bar{z}$,
where $\iota (x,\xi)= (x,\xi')\in T^*Z$ for $(x,\xi)\in T^*Y|_Z$.

Consider geodesic trajectories  $\Psi_t  ( \bar{z}^{\pm} )$, going from $\bar{z}^{\pm})$ into $T^*Y_{1,2}$ for $t<0$, 
$|t|<\epsilon$; this distinguishes these two points. 

We also can consider geodesic trajectories 
$\Psi_t  (\bar{z}^{\pm})$, going from $\bar{z}^{\mp}$ into $T^*Y_{1,2}$ for $t>0$, $|t|<\epsilon$. 

Let  $\iota^{-1}\bar{z}=\{\bar{z}^+,\bar{z}^-\}$ and let $\Psi_t (\iota^{-1}\bar{z})$ be obtained as a corresponding union as well\footnote{\label{foot-10} So we actually restrict $\iota$ to $S^*Y|_Z$ and $\iota^{-1}$ to $B^*Z$.}. So, for such point $\bar{z}$ \ $\Psi_t (\iota^{-1}z)$ with $t<0$ consists of two 
 incoming geodesic trajectories, $\Psi_t (\iota^{-1}z)$ with $t<0$ consists of two outgoing geodesic trajectories. Similarly, for $z\in (T^*(Y\setminus Z))$ we can introduce $\Psi_t (z)$: when trajectory hits $Z$ it branches.

\begin{theorem-foot}\footnotetext{\label{foot-11} Cf. Theorem~\ref{monsterbook-thm-8-5-11} of \cite{monsterbook}.}
\label{thm-3-12} 
Consider a point $z=(x,\xi)\in T^*Y$, $|\xi|=1$. Consider a (branching ) geodesic trajectory  $\Psi_t (z)$ with $\pm t\in [0,  T]$ (one sign only) with $T\ge \epsilon_0$  and assume that for each $t$ indicated it meets $\partial X$ transversally i.e.
\begin{multline}
\dist (\uppi_x \Psi_t (x,\xi),\partial X) \le \epsilon
\implies\\
 |\frac{d\ }{dt}\dist (\uppi_x \Psi_t (x,\xi),\partial X)|\ge \epsilon \qquad \forall t: \pm t\in [0, mT].
\label{eqn-3-13}
\end{multline}
Also assume that
\begin{equation}
\dist (\uppi_x \Psi_t (x,\xi),\partial X)\ge \epsilon_0 \qquad \text{as\ \ }
t=0, \ \pm t=T.
\label{eqn-3-14}
\end{equation}
Let $\epsilon >0$ be a small enough constant, $Q$ be supported in $\epsilon$-vicinity of $(x,\xi)$ and $Q_1\equiv 1$ in $C_0\epsilon$-vicinity of $\Psi_t(x,\xi)$ as $t=\pm T$. Then operator  $(I-Q_1) e^{-it\Lambda }Q$ is negligible as $t=\pm mT$.
\end{theorem-foot}

\begin{proof}
Proof follows the proof of  Theorem~\ref{monsterbook-thm-8-5-11} of \cite{monsterbook} with $m=1$. 
\end{proof}

Adapting construction of the ``dependence set'' to our case, we arrive to the following

\begin{definition}\label{def-3-13}
\begin{enumerate}[label=(\roman*), wide, labelindent=0pt]
\item\label{def-3-13-i}
The curve $z(t)$ in $T^*Y$ is called a \emph{generalized geodesic billiard\/} if a.e. 
\begin{equation}
\frac{dz}{dt} \in K(z),
\label{eqn-3-15}
\end{equation}
where 
\begin{enumerate}[label=(\alph*), wide, labelindent=20pt]
\item \label{def-3-13-ia}
$K(z)= \{H_{g}(z)\}$, $g(z)$ is a metric form, if $z\in T^*(Y\setminus Z)$,
\item\label{def-3-13-ib}
$K(z) =\{H_{g}(z')\colon z'\in \iota^{-1}\iota z\}$,  if $z\in T^*Y|_Z$.
\end{enumerate}
\item\label{def-3-13-ii}
Let $\Psi_t(z)$ for $t\gtrless 0$ be a set of points  $z'\in T^*Y$ such that there exists generalized geodesic billiard $z(t')$ with $0 \lessgtr t' \lessgtr t$, such that $z(0)=z$ and $z(t)=z'$. Map $(z,t)\mapsto \Psi_t(z)$ is called a \emph{generalized (branching) billiard flow\/}.
\item\label{def-3-13-iii}
Point $z\in T^*Y$ is \emph{partially periodic\/} (with respect to $\Psi$) if for some $t\ne 0$ $z\in \Psi_t(z)$. Point $z\in T^*Y$ is \emph{completely periodic\/} (with respect to $\Psi$) if for some $t\ne 0$ $\{z\}= \Psi_t(z)$
\end{enumerate}
\end{definition}

Then we arrive immediately to
\begin{corollary}\label{cor-3-14}
Assume that $Z$ consists only of only outer edges. Also assume that the set of all partially periodic points 
is zero. 

Then $\N_h (\tau)$ and $\N_h (\tau) *\tau_+^{r-1}$ are approximated by the corresponding Tauberian expressions with $T\asymp h^{1-\delta}$  with errors  $o(h^{1-d})$ and $o(h^{1-d+r})$ respectively (as $\tau= 1$ and $h\to +0$).
\end{corollary}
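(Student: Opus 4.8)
The plan is to derive Corollary~\ref{cor-3-14} from the propagation analysis of Theorem~\ref{thm-3-12} by the standard Tauberian argument of \cite{monsterbook}, upgraded from $O$ to $o$ by exploiting the measure-zero hypothesis on partially periodic points. First I would recall from Corollary~\ref{cor-3-9} that for $T\asymp h^{1-\delta}$ the Tauberian expression already approximates $\N_h(\tau)$ and $\N_h(\tau)*\tau_+^{r-1}$ at $\tau=1$ with errors $O(h^{1-d})$ and $O(h^{1-d+r})$; to improve this to $o$ it suffices, by the usual Tauberian machinery (cf. the arguments around Theorem~\ref{monsterbook-thm-4-4-9} of \cite{monsterbook}), to show that
\begin{equation*}
\bigl|F_{t\to h^{-1}\tau}\Gamma_x\bigl(\bar\chi_{T}(t)-\bar\chi_{T_0}(t)\bigr)u\bigr|\le o(h^{1-d})\qquad\text{for }h^{1-\delta}\le T\le T_0
\end{equation*}
uniformly in $\tau$ near $1$, i.e. that the contribution of the ``long'' time interval $[h^{1-\delta},T_0]$ to the local trace is $o(h^{1-d})$, with $T_0$ a small constant. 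Equivalently, I must show that the principal symbol of the wave front of $e^{it\Lambda}$ on that time range, integrated over the diagonal, carries no mass that does not vanish as $h\to0$.

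The key steps, in order, are: (1) microlocalize to the region $\{|\xi'|\le\tau-\epsilon\}$ — which after Remark~\ref{rem-3-3} and the reflection estimates Propositions~\ref{prop-3-10}, \ref{prop-3-11} is legitimate, since the singularities with $|\xi'|\ge\tau-\epsilon$ that would stick to $Z$ are killed by Proposition~\ref{prop-3-11} in the (exclusively outer) edge case; this is exactly where the ``only outer edges'' hypothesis is used, so that a typical singularity with $|\xi'|<\tau$ does not stick to $Z$. (2) Use Theorem~\ref{thm-3-12} to propagate: away from $Z$ the flow is the ordinary geodesic flow on $Y_1$ or $Y_2$, and when a trajectory hits $Z$ transversally it branches into the finitely many outgoing rays in $\iota^{-1}\iota z$; transversality is generic in the sense of (\ref{eqn-3-13}) for a.e. starting point once periodic points have measure zero. (3) Cover $[h^{1-\delta},T_0]$ by a dyadic partition of unity in $t$, and on each piece partition $S^*Y$ into a set $\Omega_\rho$ of points whose $\Psi_t$-orbit returns to an $\rho$-neighbourhood of itself and its complement; on the complement the contribution to $\Gamma_x u$ is $O(h^m)$ by Theorem~\ref{thm-3-12} (the operator $(I-Q_1)e^{-it\Lambda}Q$ is negligible when the trajectory has genuinely moved away), while $\operatorname{mes}(\Omega_\rho)\to0$ as $\rho\to0$ by the zero-measure hypothesis on partially periodic points, so the total contribution is $\le C\,\operatorname{mes}(\Omega_\rho)h^{1-d}+C_\rho h^m$, which is $o(h^{1-d})$ after first sending $h\to0$ and then $\rho\to0$. (4) Feed this into the Tauberian theorem to get the $o$-remainder for $\N_h(\tau)$; the Riesz-mean version $\N_h(\tau)*\tau_+^{r-1}$ follows by the same estimate combined with the additional $h^r$ gain in the Tauberian expression for Riesz means, exactly as in Corollary~\ref{cor-3-9}.

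The main obstacle is step (3): one must make precise that the branching of the billiard flow at $Z$ does not destroy the measure-theoretic argument. Unlike an ordinary flow, $\Psi_t$ is multivalued, so "the set of points whose orbit returns" must be defined carefully (a point is counted if \emph{some} branch of its forward orbit returns near \emph{some} branch), and one needs that this set still has measure zero given that the set of partially periodic points does — this requires checking that the branching map $\iota^{-1}\iota$ is measure-nonincreasing in the appropriate sense and that the finitely-branching structure keeps the relevant sets measurable and of controlled measure. A secondary technical point is the uniformity in $\tau$ near $1$ and the handling of the glancing set $\{|\xi'|=\tau\}$ near $Z$, which has measure zero but where the transversality condition (\ref{eqn-3-13}) degenerates; here one argues that Propositions~\ref{prop-3-10}–\ref{prop-3-11} together with the rescaling of Corollary~\ref{cor-3-8} absorb a neighbourhood of glancing of size $\to0$, contributing only $o(h^{1-d})$. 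Once these points are secured, the rest is the standard Tauberian bookkeeping of \cite{monsterbook}.
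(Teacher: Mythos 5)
Your proposal is correct and follows exactly the route the paper intends: the paper's own ``proof'' of Corollary~\ref{cor-3-14} is literally ``Easy details are left to the reader,'' relying on Propositions~\ref{prop-3-10}--\ref{prop-3-11} (non-sticking at outer edges), Theorem~\ref{thm-3-12} (propagation along branching billiards), and the standard Tauberian upgrade from $O$ to $o$ via the measure-zero hypothesis, which is precisely your steps (1)--(4). The obstacle you flag in step (3) concerning multivalued branching flows and dead-end points is real and is acknowledged by the paper itself in Remark~\ref{rem-4-3}\ref{rem-4-3-ii}, so your caution there is well placed rather than a deviation.
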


\begin{proof}
Easy details are left to the reader.
\end{proof}

\chapter{Main results}
\label{sect-4}

\section{From Tauberian to Weyl asymptotics}
\label{sect-4-1}

Now we can apply the method of successive approximations as described in Section~\ref{monsterbook-sect-7-2} of \cite{monsterbook}, considering an unperturbed operator 
\begin{enumerate}[label=(\alph*), wide, labelindent=20pt]
\item 
As one in $\bR^{d}$, with the metrics, frozen at point $y$, if $\dist(y,Z)\ge h^{1-\delta}$.
\item
As one in the dihedral edge, with the metrics, frozen at point $(y',0)$, if $y=(y';s)$ with
$|s|=\dist(y,Z)\le h^{1-\delta}$,
\end{enumerate}
with the following modification: 

We calculate $\Lambda$ also this way,  applying successful approximations for both $\Delta $, when we solve 
$\Delta w=0$, $w|_Y=v$, and to $\partial_\nu $, when we calculate $\partial_\nu w|_Y$.

Then we prove that for operator $h\Lambda$ the Tauberian expression $\N_h^\T(1)$ for $\N_h^-(1)$ with $T=h^{1-\delta}$ coincides modulo $O(h^m)$ with the (generalized) Weyl expression
\begin{equation}
\N^\W_h \sim  \kappa_0 h^{-d} + \kappa_1 h^{1-d}+ \ldots,
\label{eqn-4-1}
\end{equation}
with the standard coefficient $\kappa_0$  and with $\kappa_1=\kappa_{1,Y\setminus Z}+\kappa_{1,Z}$, where $\kappa_{1,Y\setminus Z}$ is calculated in the standard way, for the smooth boundary, and
\begin{gather}
\kappa_{1,Z}=(2\pi)^{1-d}\omega_{d-1}\int _Z \varkappa (\alpha (y))\,dy\,,
\label{eqn-4-2}\\
\varkappa (\alpha) =
\int_1^\infty \int_{-\infty}^\infty \lambda ^{-d}
\Bigl( \mathbf{e}_\alpha (s,s,\lambda) - \pi ^{-1}(\lambda-1) \Bigr)\,dsd \lambda\,,
\label{eqn-4-3}
\end{gather}
$\mathsf{e}_\alpha (s,s',\lambda)$ is a Schwartz kernel of the spectral projector of $\hat{\Lambda}$ in the planar angle of solution $\alpha$ and  $\pi ^{-1}(\lambda-1)$  is a corresponding Weyl approximation.

\section{Main theorems}
\label{sect-4-2}

Thus we arrive to the corresponding asymptotics for $\N^-_h(1)$ and from them, obviously to asymptotics for 
$\N (\tau)$:

\begin{theorem}\label{thm-4-1}
Let $Y$  be a compact manifold with edges. Then the following asymptotics hold as $\tau\to+\infty$:
\begin{align}
&\N(\tau) = \kappa_0 \tau^{d} + O(\tau^{d-1}) 
\label{eqn-4-4}
\shortintertext{and for $r>0$}
&\N(\tau)*\tau_+^{r-1} = \Bigl(\sum_{m<r} \kappa_m \tau^{d-r}\Bigr)*\tau_+^{r-1} + O(\tau^{d-1}).
\label{eqn-4-5}
\end{align}
\end{theorem}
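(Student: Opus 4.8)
The plan is to assemble Theorem~\ref{thm-4-1} from the microlocal and Tauberian machinery developed in Chapters~\ref{sect-3} and \ref{sect-4}, via the standard rescaling-and-partition argument. First I would localize: cover $Y$ by a partition of unity subordinate to the three types of charts (regular points, inner-edge points, outer-edge points), with the edge pieces further stratified by the distance $\gamma=\dist(y,Z)$ on a dyadic scale $\gamma\asymp 2^{-k}$ down to $\gamma\asymp h^{1-\delta}$, and a single innermost layer $\gamma\lesssim h^{1-\delta}$. On each piece I rescale $x\mapsto x/\gamma$, turning the problem into one at unit scale with effective semiclassical parameter $h/\gamma$; in this rescaled picture $\Lambda$ is (by Remark~\ref{rem-2-8} and Proposition~\ref{prop-2-7}) a rough first-order operator to which the propagation results Propositions~\ref{prop-3-5}, \ref{prop-3-7} and Corollary~\ref{cor-3-8} apply. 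The upshot of Corollary~\ref{cor-3-8} is that with $T\asymp h^{1-\delta}$ the local contribution to $\N_h(1)$ (and to its Riesz means) is given by the Tauberian expression up to an error $O(h^{1-d})$ (resp.\ $O(h^{1-d+r})$) on each piece, and summing the geometric-type series in $k$ keeps the total error of this order — the innermost dihedral layer is handled directly by the dihedral toy-model estimates of Section~\ref{sect-2-1} together with Proposition~\ref{prop-3-1}.

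Second, I would invoke Section~\ref{sect-4-1}: the method of successive approximations (as in Section~\ref{monsterbook-sect-7-2} of \cite{monsterbook}), applied with the unperturbed operator chosen to be the frozen-coefficient model in $\bR^d$ when $\gamma\ge h^{1-\delta}$ and the frozen-coefficient dihedral model when $\gamma\le h^{1-\delta}$, shows that the Tauberian expression $\N_h^\T(1)$ agrees modulo $O(h^m)$ with the Weyl expression \eqref{eqn-4-1}, whose leading coefficient is the standard $\kappa_0=(2\pi)^{-d}\omega_d\mes(Y)$. For the purposes of \eqref{eqn-4-4}--\eqref{eqn-4-5} one does not even need the precise form of $\kappa_1$ from \eqref{eqn-4-2}--\eqref{eqn-4-3}: it suffices that $\N_h^\W$ has an asymptotic expansion in powers of $h$ with the correct leading term and that the remainder after the $h^{-d}$ term is $O(h^{1-d})$ (for \eqref{eqn-4-4}) or, after the terms with $d-m>d-r$, is $O(h^{1-d})$ (for \eqref{eqn-4-5}). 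Combining the two steps: $\N_h^-(1)=\kappa_0 h^{-d}+O(h^{1-d})$ and correspondingly for the Riesz means.

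Third, I would pass from the semiclassical counting function at energy $1$ back to $\N(\tau)$ by the usual substitution $h=\tau^{-1}$: since $\N(\tau)=\N_h^-(1)$ with $h=1/\tau$ (the eigenvalue count of $\Lambda$ below $\tau$ equals the count of $h\Lambda$ below $1$), \eqref{eqn-4-4} follows immediately, and \eqref{eqn-4-5} follows by the same substitution applied to the Riesz-mean version, noting that convolution with $\tau_+^{r-1}$ commutes with the rescaling in the obvious way and that the lower-order Weyl terms $\kappa_m\tau^{d-r}$ (for $m<r$) are exactly those whose Riesz means are not absorbed into the $O(\tau^{d-1})$ remainder. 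I expect the main obstacle to be bookkeeping the rescaling uniformly across the dyadic layers near $Z$ — in particular checking that the constants in the propagation estimates of Section~\ref{sect-3-1} are uniform in $\gamma$ (so that the geometric sum over $k$ converges with the right power of $h$) and that the successive-approximation error in Section~\ref{sect-4-1} is summable over the layers; once that uniformity is in hand, everything else is the standard Tauberian-to-Weyl passage, and the ``easy details'' alluded to after Corollary~\ref{cor-3-14} carry over with at most cosmetic changes.
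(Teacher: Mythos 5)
Your proposal follows essentially the same route as the paper: the Tauberian approximation with $T\asymp h^{1-\delta}$ and error $O(h^{1-d})$ (resp.\ $O(h^{1-d+r})$ for Riesz means) from Corollary~\ref{cor-3-9}, the passage from the Tauberian to the Weyl expression by successive approximations with the frozen flat/dihedral models as in Section~\ref{sect-4-1}, and the substitution $h=\tau^{-1}$. The extra bookkeeping you describe (dyadic layers in $\dist(y,Z)$, uniformity of constants under rescaling) is exactly what the paper leaves implicit in its references to \cite{monsterbook}, so the argument is correct and matches the intended proof.
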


\begin{theorem}\label{thm-4-2}
Let $Y$  be a compact manifold with edges. Assume that $Z$ consists only of outer edges and that the set of all  points, which are partially periodic with respect to the generalized billiard  flow, has a measure~$0$.  Then the following asymptotics hold as $\tau\to+\infty$:
\begin{align}
&\N(\tau) = \kappa_0 \tau^{d} + \kappa_1 \tau^{d-1}+o(\tau^{d-1}) 
\label{eqn-4-6}
\shortintertext{and for $r>0$}
&\N(\tau)*\tau_+^{r-1} = \Bigl(\sum_{m \le r} \kappa_m \tau^{d-r}\Bigr)*\tau_+^{r-1} + o(\tau^{d-1}).
\label{eqn-4-7}
\end{align}
\end{theorem}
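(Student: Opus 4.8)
The plan is to obtain Theorem~\ref{thm-4-2} as the refinement of Theorem~\ref{thm-4-1} in which the sharp remainder $O(\tau^{d-1})$ is improved to $o(\tau^{d-1})$ under the dynamical hypothesis. As for Theorem~\ref{thm-4-1}, I would work with the semiclassical reduction $h\Lambda$ (so $\tau=1$, $h=\tau^{-1}\to+0$) and pass back to $\N(\tau)$ at the end by the usual substitution; it therefore suffices to prove that $\N_h^-(1)$ and its Riesz means $\N_h^-(1)*\tau_+^{r-1}$ agree with the Weyl expression \textup{(\ref{eqn-4-1})} modulo $o(h^{1-d})$ and $o(h^{1-d+r})$ respectively. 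The starting point is Corollary~\ref{cor-3-14}: since $Z$ consists only of outer edges and the set of partially periodic points has measure $0$, the Tauberian expression with $T\asymp h^{1-\delta}$ already approximates $\N_h(\tau)$ and its Riesz means with the improved errors $o(h^{1-d})$ and $o(h^{1-d+r})$. So the whole problem is reduced to replacing that Tauberian expression by the Weyl expression with an $o(\cdot)$ error.

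The second step is exactly the successive-approximation argument of Section~\ref{sect-4-1}: freeze the metric at $y$ (model in $\bR^d$) where $\dist(y,Z)\ge h^{1-\delta}$, and freeze it at $(y',0)$ (dihedral-angle model) where $\dist(y,Z)\le h^{1-\delta}$, computing $\Lambda$ itself by successive approximations in both the harmonic extension $\Delta w=0,\ w|_Y=v$ and in $\partial_\nu$. This identifies $\N_h^\T(1)$ with $\N^\W_h$ of \textup{(\ref{eqn-4-1})}, with $\kappa_0$ standard and $\kappa_1=\kappa_{1,Y\setminus Z}+\kappa_{1,Z}$, the edge term $\kappa_{1,Z}$ being given by \textup{(\ref{eqn-4-2})}--\textup{(\ref{eqn-4-3})}. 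For this to yield an $o(\cdot)$ rather than merely an $O(\cdot)$ error one needs, in addition, the finiteness and convergence of the integral defining $\varkappa(\alpha)$ in \textup{(\ref{eqn-4-3})}: this is where the properties of $\mathbf{e}_\alpha(s,s,\lambda)$ established in Appendix~\ref{sect-A} (in particular the decay of $\mathbf{e}_\alpha(s,s,\lambda)-\pi^{-1}(\lambda-1)$ as $|s|\to\infty$, coming from Propositions~\ref{prop-2-3} and Corollary~\ref{cor-2-4}) are invoked; for outer angles $\alpha\in(\pi,2\pi)$ the relevant $\bar\sigma=\pi/\alpha-1<0$, so the kernel estimate \textup{(\ref{eqn-2-6})} gives genuine decay and the $ds$-integral converges absolutely, and the $d\lambda$-integral converges at $\lambda=\infty$ by the $\lambda^{-d}$ weight together with the Weyl subtraction at $\lambda=1$. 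Then assembling the pieces over a partition of unity subordinate to the two regimes, and using that the contribution of the zone $\dist(y,Z)\asymp h^{1-\delta}$ is itself $o(h^{1-d})$, gives the claimed two-term (respectively $\lfloor r\rfloor+2$-term) asymptotics for $\N_h^-(1)$ and its Riesz means.

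The main obstacle, as always in this circle of ideas, is the global dynamical input: one must know that the Tauberian-to-Weyl passage is legitimate with an $o(\cdot)$ error, and this rests on showing that the long-time propagator $e^{-it\Lambda}$ contributes negligibly after averaging, i.e. that a typical ray $(x,\xi)$ with $|\xi|=1$ is \emph{not} partially periodic under the branching billiard flow $\Psi_t$ of Definition~\ref{def-3-13}. That is precisely the hypothesis of the theorem, but making it usable requires Theorem~\ref{thm-3-12} (propagation along branching billiards, with transversal reflection/refraction at $Z$) together with Proposition~\ref{prop-3-11}, which guarantees that a singularity with $|\xi'|<1$ genuinely leaves a neighbourhood of $Z$ rather than sticking to it --- and the latter is exactly where the restriction to \emph{outer} edges $\alpha\in(\pi,2\pi)$ is essential, via inequality \textup{(\ref{eqn-3-10})} and Proposition~\ref{cor-A-11}. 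For inner edges \textup{(\ref{eqn-3-10})} fails (Proposition~\ref{prop-A-16}), singularities can concentrate at $Z$, and no such improvement is available; this is the structural reason the hypothesis cannot be relaxed.

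Finally, the translation from the semiclassical statement about $\N_h^-(1)$ to the spectral statement \textup{(\ref{eqn-4-6})}--\textup{(\ref{eqn-4-7})} for $\N(\tau)$ is the routine rescaling already used for Theorem~\ref{thm-4-1}: $\N(\tau)=\N_{1/\tau}^-(1)$, and the Riesz-mean identities $\N(\tau)*\tau_+^{r-1}$ transform compatibly, so the $o(h^{1-d+r})$ bound becomes $o(\tau^{d-r})*\tau_+^{r-1}=o(\tau^{d-1})$. I would present this last step in a single sentence, as the heavy lifting is entirely in Corollary~\ref{cor-3-14} and the successive-approximation computation of $\kappa_0,\kappa_1$.
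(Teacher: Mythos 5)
Your proposal follows the paper's own route exactly: Corollary~\ref{cor-3-14} supplies the Tauberian approximation with $o(h^{1-d})$ (resp.\ $o(h^{1-d+r})$) error under the outer-edge and non-periodicity hypotheses, the successive-approximation scheme of Section~\ref{sect-4-1} converts the Tauberian expression into the Weyl expression \textup{(\ref{eqn-4-1})} with the edge coefficient \textup{(\ref{eqn-4-2})}--\textup{(\ref{eqn-4-3})}, and the substitution $h=\tau^{-1}$ yields \textup{(\ref{eqn-4-6})}--\textup{(\ref{eqn-4-7})}. Your added remarks on the convergence of the integral defining $\varkappa(\alpha)$ and on where the outer-edge restriction enters (Propositions~\ref{prop-3-11} and~\ref{cor-A-11}) are consistent with, and somewhat more explicit than, the paper's treatment.
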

\section{Discussion}
\label{sect-4-3}

\begin{remark}\label{rem-4-3}
\begin{enumerate}[label=(\roman*), wide, labelindent=0pt]
\item\label{rem-4-3-i}
Even for standard ordinary non-branching billiards, billiard flow $\Psi_t$ could be multivalued. However, if through point $z\in T^*(Y\setminus Z)$ (where now $Y$ is a manifold, and $Z$ is its boundary) passes an infinitely long in the positive (or negative) time direction billiard trajectory, which always meets $Z$ transversally, and each finite time interval contains a finite number of reflections, then $\Psi_t(z)$ for $\pm t>0$ is single-valued. Points, which do not have such property, are called \emph{dead-end points\/}. For ordinary billiards the set of dead-end points has measure zero.

\item\label{rem-4-3-ii}
For branching billiards (with velocities $c_1,c_2$) we can introduce the notion of the dead-end point as well: it is if at least one of the branches either meets $Z$ non-transversally, or makes an infinite number of reflections on some finite time interval. As it was shown by Yu.~Safarov and D.~Vassiliev \cite{safarov:vassiliev}, if $c_1$ and $c_2$ are not disjoint (our case!), the set of dead-end billiards could have positive measure.
\end{enumerate}
\end{remark}

\begin{remark}\label{rem-4-4}
\begin{enumerate}[label=(\roman*), wide, labelindent=0pt]
\item\label{rem-4-4-i}
Checking non-periodicity assumption is difficult. But in some domains  it will be doable. F.e. assume that $Y=Y_1\cup Y_2$ globally is a domain of revolution, so $Z$ is a $(d-1)$-dimensional sphere.  Then the measure of  the set of dead-end billiards is $0$. 
\item\label{rem-4-4-ii}
Assume that neither $Y_1$, nor $Y_2$ contains closed geodesics.  Let $\varphi_j(\beta)$ be the length of the segment of geodesics in $Y_j$, with only ends on $Z$, where $\beta$ is the reflection angle:
\begin{figure}[h]
\centering
\begin{tikzpicture}[scale=1.2]

\fill[cyan!5] (0,0) circle (1.5);
\node at (-1,-.5) {$Y_j$};
\draw[cyan] (0, 1.5)--(0,0)--(1.2,-.9);
\draw[thick]  (0, 1.5)--(1.2,-.9);
\draw [thin, cyan] (-.7,1.5)--(.7,1.5);
\draw[thin,cyan] (.2,1.5) arc (0:-65:.2);
\node at (.3,1.3) {$\beta$};

\draw[thin,cyan] (0,.2) arc (90:-33:.2);
\node at (.4,.1) {$\varphi_j$};

\draw[very thick] (0,0) circle (1.5);
\node at (1.3,1.3) {$Z$};
\end{tikzpicture}
\label{fig-1}
\caption{Trajectories on the manifold of revolution }
\end{figure}
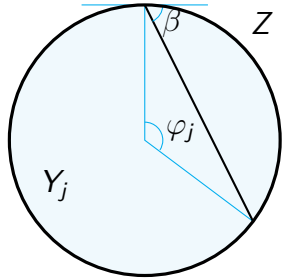
 Assume that $\varphi_j(\beta)$ are analytic and $\varphi_j(\beta)\to 0$ as $\beta\to +0$. Then the measure of  the set of partially periodic billiards is $0$.
 \end{enumerate}
\end{remark}

\begin{remark}\label{rem-4-5}
Our arguments hold  not only for compact $X$ but also for $X\subset \bR^{d+1}$ with the compact complement and with the metrics. stabilizing to Euclidean at infinity.
\end{remark}

\begin{remark}\label{rem-4-6}
\begin{enumerate}[label=(\roman*), wide, labelindent=0pt]
\item\label{rem-4-6-i}
In the next version of this paper we want to prove sharper asymptotics for domains with inner edges. To do this we need to understand, how singularities propagate near inner edges. One can prove that there are plenty of singularities, concentrated in  $Z\times \bR\ni (x,t)$ and $\{|\xi'|< \tau\}$. This is  similar to the Rayleigh waves. And, we hope, exactly like Rayleigh waves, those singularities do not prevent us from the sharper asympotics.

What we need to prove is that the singularities in $\{|\xi'|<\tau\}$, coming from $Y\setminus Z$ transversally to $Z$, reflect and refract but  leave $Z$ instantly. In other words, that these two kinds of waves are completely separate. It is what I am trying to prove now.

\item\label{rem-4-6-ii}
Let $\cK$ be the linear span of the corresponding eigenfunctions. We need to prove that  $\|\nabla w\|\ge \|w\|$  holds for $w=\hat{J}v$ with $v\in \cK^\perp$. One can prove easily that  $\|\nabla w\|= \|w\|$ for $w=\hat{J}v$ and eigenfunction $v$ (Proposition~\ref{prop-A-16}).
\end{enumerate}
\end{remark}

\begin{appendices}
\chapter{Planar toy-model}
\label{sect-A}

\section{Preparatory results}
\label{sect-A-1}

Here, in contrast to the whole article, $X= \{x\in\bR^2, x_1\ge |x_2|\cot (\alpha/2)\}$  is a planar angle of solution $\alpha \in (0,2\pi]$ with a boundary $Y=Y_1\cup Y_2$, $Y_{1,2}= \{x\colon x_1=|x_2|\cot(\alpha/2),\ \pm x_2<0\}$ and a bisector $Y_0 =\{x\colon x_2=0, x_1>0\}$, and $\Delta =-\partial_1^2-\partial_2^2$ is a positive Laplacian (so, for simplicity we do not write ``hat'').

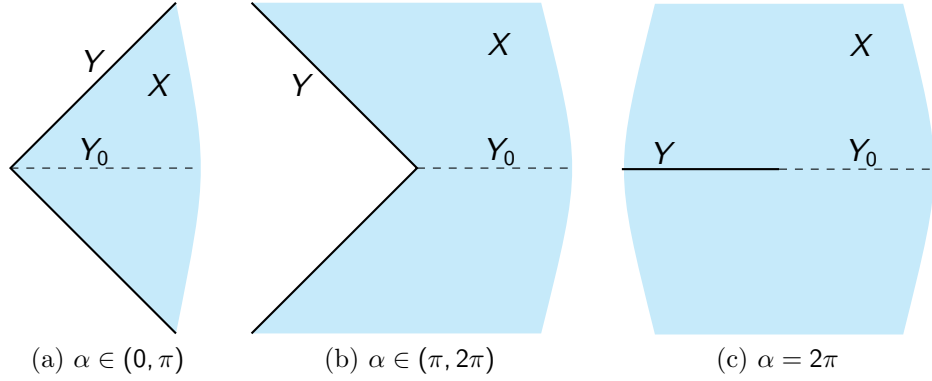
\begin{figure}[h]
\centering
\subfloat[$\alpha \in (0,\pi)$]{%
\begin{tikzpicture}[scale=1.1]
 \fill[cyan!20] (2,2)--(0,0)--(2,-2).. controls (2.4,0)..(2,2);
\draw[thick] (2,2)--(0,0)--(2,-2);
\draw[dashed] (0,0)--(2.2,0);
\node at (1,1.3) {$Y$};
\node at (1,.2) {$Y_0$};
\node at (1.8,1) {$X$};
\end{tikzpicture}}
\quad
\subfloat[$\alpha \in (\pi,2\pi)$]{
\begin{tikzpicture}[scale=1.1]
\fill[cyan!20] (1.5,2)--(-2,2)--(0,0)--(-2,-2)--(1.5,-2) .. controls (2,0)..(1.5,2);
\draw[thick] (-2,2)--(0,0)--(-2,-2);
\draw[dashed] (0,0)--(1.9,0);
\node at (1,.2) {$Y_0$};
\node at (-1.4,1) {$Y$};
\node at (1,1.5) {$X$};
\end{tikzpicture}}
\quad
\subfloat[$\alpha =2\pi$]{%
\begin{tikzpicture}[scale=1.1]
\fill[cyan!20] (1.5,2)--(-1.5,2)..controls (-2,0) ..(-1.5,-2)--(1.5,-2) .. controls (2,0)..(1.5,2);
\draw[thick] (-1.9,0)--(0,0);
\draw[dashed] (0,0)--(1.9,0);
\node at (1,.2) {$Y_0$};
\node at (-1.4,.2) {$Y$};
\node at (1,1.5) {$X$};
\end{tikzpicture}}

\caption{Proper and improper angles}
\label{fig-2}
\end{figure}

\begin{remark}\label{rem-A-1}
\begin{enumerate}[label=(\roman*), wide, labelindent=0pt]
\item\label{rem-A-1-i}
For $\alpha=\pi$ we have a regular half-plane $\{x\colon x_1>0\}$, and for $\alpha =2\pi$ we have a plane with the cut $\{x\colon x_1\le 0,\ x_2=0\}$.
\item
One can consider $\alpha >2\pi$ on the covering of $\bR^2$.
\end{enumerate}
\end{remark}

Consider real-valued\footnote{\label{foot-A-1} For complex-valued solutions then the main inequalities  with $w^2$ replaced by $|w|^2$ follow automatically.} solutions of
\begin{equation}
L w \coloneqq (\Delta +1)w=0
\label{A-1}
\end{equation}
and operators $J$, $\Lambda$: $w=Jv$ solves (\ref{A-1}) and $w|_Y=v$; $\Lambda v=-\partial_\nu w|_Y$ with $w=Jv$. Recall that $\nu$ is an inner normal to $Y$.

Observe that for any angle\footnote{\label{foot-A-2} Not necessary symmetric with respect to $x_1$-axis.}
\begin{equation}
2\iint _X  Lw\cdot  w_{x_1}\,dx_1dx_2= \int_{\partial X} \Bigl((w_{x_1}^2 -w_{x_2}^2-w^2) \nu_1 + 2 w_{x_1}w_{x_2} \nu_2 \Bigr)\,dr,
\label{A-2}
\end{equation}
where $dr$ is a Euclidean measure on $Y$, and the then similar formula holds with $x_1$ and $x_2$ permuted.
Then for solution of (\ref{A-1})
\begin{equation}
\int_{\partial X}
\Bigl((w_{x_1}^2 -w_{x_2}^2) (\nu_1 \ell_1 -\nu_2\ell_2) + 2 w_{x_1}w_{x_2}   (\nu_2 \ell_1 +\nu _1\ell_2)-w^2(\nu_1\ell_1+\nu_2\ell_2)\Bigr)\,dr=0.
\label{A-3}
\end{equation}
If on $\Gamma\subset \partial X$ $\ell_1=\nu_1$, $\ell_2=\nu_2$, then we can calculate invariantly as if $\ell_1=\nu_1=0$, $\ell_2=\nu_2=1$:
\begin{multline}
(w_{x_1}^2 -w_{x_2}^2) (\nu_1 \ell_1 -\nu_2\ell_2) + 2 w_{x_1}w_{x_2}  (\nu_2 \ell_1 +\nu _1\ell_2)-w^2(\nu_1\ell_1+\nu_2\ell_2)= \\
w_\nu^2-w_r^2 -w^2,
\label{A-4}
\end{multline}
where $w_r=\partial_r w$ and $w_\nu=\partial_\nu w$.

All these formulae hold not only for the original angle, but also for the smaller angle. Then let consider
as $X$ an upper half of the symmetric angle, $\partial X= Y_2\cup Y_0$, on $Y_0$ the integrand is
\begin{equation}
\cI\coloneqq  (w_{x_2}^2 -w_{x_1}^2 -w^2)\ell_2   + 2 w_{x_1}w_{x_2}   \ell_1
\label{A-5}
\end{equation}
with $\ell_1=\sin (\alpha/2)$, $\ell_2=-\cos(\alpha/2)$.

Consider different cases:

\emph{Antisymmetric case:\/} $w|_{Y_0}=0$, then $\cI= -w_{x_2}^2\cos(\alpha/2)$ and
\begin{equation}
\int _{Y_2} \bigl(w_\nu^2-w_r^2 -w^2\bigr)\,dr - \cos(\alpha/2) \int_{Y_0} w_{x_2}^2 \,dx =0.
\label{A-6}
\end{equation}

\emph{Symmetric case:\/} $w_{x_2}|_{Y_0}=0$, then $\cI= (w_{x_1}^2+w^2)\cos(\alpha/2)$ and
\begin{equation}
\int _{Y_2} \bigl(w_\nu^2-w_r^2-w^2\bigr)\,dr + \cos(\alpha/2)\int_{Y_0} (w_{x_1}^2+w^2)\,dx_1 =0.
\label{A-7}
\end{equation}

\begin{proposition}\label{prop-A-2}
Let $w$ satisfy \textup{(\ref{A-1})}. Let \underline{either} $\alpha \in (0,\pi]$  and $w$ is antisymmetric, \underline{or} $\alpha \in [\pi,2\pi)$  and $w$ is symmetric. Then\begin{equation}
\|\nabla w\|^2 \ge  \|w\|^2.
\label{A-8}
\end{equation}
\end{proposition}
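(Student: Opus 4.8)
The plan is to exploit the identity \textup{(\ref{A-4})} on $Y_2$ together with the case‑specific boundary relations \textup{(\ref{A-6})}, \textup{(\ref{A-7})} on $Y_0$, and then pass from the half‑angle back to the full angle by symmetry. First I would note that the vector field $\ell$ used in \textup{(\ref{A-2})}–\textup{(\ref{A-5})} can be chosen so that $\ell=\nu$ on $Y_2$; this is exactly the hypothesis under which \textup{(\ref{A-4})} gives integrand $w_\nu^2-w_r^2-w^2$ on $Y_2$. On $Y_0$ the integrand is $\cI$ from \textup{(\ref{A-5})} with $\ell_1=\sin(\alpha/2)$, $\ell_2=-\cos(\alpha/2)$, which collapses to $-w_{x_2}^2\cos(\alpha/2)$ in the antisymmetric case and to $(w_{x_1}^2+w^2)\cos(\alpha/2)$ in the symmetric case. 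So the starting point is precisely \textup{(\ref{A-6})} (for $\alpha\in(0,\pi]$, antisymmetric) and \textup{(\ref{A-7})} (for $\alpha\in[\pi,2\pi)$, symmetric).

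Next I would observe the sign: for $\alpha\in(0,\pi]$ we have $\cos(\alpha/2)\ge 0$, so \textup{(\ref{A-6})} reads
\begin{equation*}
\int_{Y_2}\bigl(w_\nu^2-w_r^2-w^2\bigr)\,dr=\cos(\alpha/2)\int_{Y_0}w_{x_2}^2\,dx\ \ge\ 0,
\end{equation*}
while for $\alpha\in[\pi,2\pi)$ we have $\cos(\alpha/2)\le 0$, so \textup{(\ref{A-7})} reads
\begin{equation*}
\int_{Y_2}\bigl(w_\nu^2-w_r^2-w^2\bigr)\,dr=-\cos(\alpha/2)\int_{Y_0}(w_{x_1}^2+w^2)\,dx_1\ \ge\ 0.
\end{equation*}
In either case $\int_{Y_2}(w_\nu^2-w_r^2-w^2)\,dr\ge 0$ for the half‑angle. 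Then I would use the symmetry assumption to extend this to the full angle $X$: in the antisymmetric case the reflection across $x_1$‑axis is $w\mapsto -w(x_1,-x_2)$ and in the symmetric case $w\mapsto w(x_1,-x_2)$, and each maps a solution of \textup{(\ref{A-1})} on the half‑angle to one on the full angle, so the inequality on $Y=Y_1\cup Y_2$ follows by doubling (equivalently, one applies \textup{(\ref{A-4})} directly on $Y_1\cup Y_2$ with $\ell=\nu$ on both leaves, and no $Y_0$ term appears). Thus
\begin{equation*}
\int_{Y}\bigl(w_\nu^2-w_r^2-w^2\bigr)\,dr\ \ge\ 0.
\end{equation*}

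To finish, I would relate the boundary integral to $\|\nabla w\|^2-\|w\|^2$ on $X$. Multiplying \textup{(\ref{A-1})} by $w$ and integrating by parts gives $\|\nabla w\|_X^2-\|w\|_X^2=-\int_Y w_\nu w\,dr$ (with $\nu$ the inner normal), so it suffices to dominate $-\int_Y w_\nu w\,dr$ by the nonnegative quantity above. Here I expect the main technical obstacle: one needs $w_\nu^2-w_r^2-w^2\ge 0$ on $Y$ to translate into control of $-\int_Y w_\nu w\,dr$, which in general requires combining the boundary identity with the interior Rellich/Pohozaev identity \textup{(\ref{A-2})} (and its $x_2$‑analogue) rather than a pointwise estimate. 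Concretely I would take a linear combination of the two Rellich identities to produce $\|\nabla w\|_X^2-\|w\|_X^2$ on the left and a boundary quadratic form on $Y$ on the right, then show that the restriction to $Y$ of that boundary form is, up to the already established sign of $\int_Y(w_\nu^2-w_r^2-w^2)\,dr$, nonnegative. The delicate point is that $w$ need not decay at the vertex or at infinity, so all integrations by parts must be justified by the mapping properties of $J$ and $\Lambda$ from Proposition~\ref{prop-2-2} (for the relevant range of $\sigma$), which is where the restriction to the indicated ranges of $\alpha$ enters. Once the integrations by parts are legitimate, the inequality $\|\nabla w\|^2\ge\|w\|^2$ drops out of the sign computation above.
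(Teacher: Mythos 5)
Your first half is exactly the paper's: you read off from \textup{(\ref{A-6})} and \textup{(\ref{A-7})}, using the sign of $\cos(\alpha/2)$ in each of the two regimes, that $\int_{Y_2}(w_\nu^2-w_r^2-w^2)\,dr\ge 0$, hence $\int_{Y_2}(|\nabla w|^2-w^2)\,dr\ge 0$. But the second half has a genuine gap. The step you flag as ``the main technical obstacle'' is indeed the whole point, and the route you sketch cannot close it: the Rellich/Pohozaev identities \textup{(\ref{A-2})}--\textup{(\ref{A-3})} applied to a solution of \textup{(\ref{A-1})} produce \emph{only} boundary integrals (the bulk term vanishes because $Lw=0$), so no linear combination of them yields $\|\nabla w\|_X^2-\|w\|_X^2$ as an interior term. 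The only identity that produces a bulk quantity is Green's identity, and it gives $\|\nabla w\|_X^2+\|w\|_X^2=-\int_Y w_\nu\, w\,dr=(\Lambda v,v)_Y$ --- note the plus sign, so your displayed relation $\|\nabla w\|_X^2-\|w\|_X^2=-\int_Y w_\nu w\,dr$ is wrong, and the correct version is trivially nonnegative and useless. A single boundary integral over $Y$ simply cannot control an area integral over $X$ without additional input. (Also, your parenthetical alternative of applying \textup{(\ref{A-4})} on $Y_1\cup Y_2$ simultaneously with $\ell=\nu$ fails: $\ell$ in \textup{(\ref{A-3})} must be a single constant vector, and it cannot equal the inner normal on both leaves unless $\alpha=\pi$.)

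The missing idea is a sweeping argument. The inequality $\int_{Y_2}(|\nabla w|^2-w^2)\,dr\ge 0$ holds not just for $X$ but for every translate $X+(t,0)$, $t>0$, of the angle along its bisector, since $w$ restricted there still solves \textup{(\ref{A-1})} and retains its symmetry; the same computation gives $\int_{Y_2(t)}(|\nabla w|^2-w^2)\,dr\ge 0$ on the translated ray $Y_2(t)$. These rays foliate the upper half of $X$, and integrating the inequality over $t\in(0,\infty)$ converts the family of one-dimensional boundary inequalities into the two-dimensional bulk inequality $\iint_{X^+}(|\nabla w|^2-w^2)\,dx\ge 0$ (up to the Jacobian factor $\sin(\alpha/2)$); the lower half follows by symmetry, giving \textup{(\ref{A-8})}. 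Without this (or some equivalent device for upgrading the boundary inequality to the interior), your argument does not reach the conclusion.
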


\begin{proof}
In both cases $\int_{Y_2} (|\nabla w|^2-w^2)\,dr \ge \int_{Y_2} (w_\nu^2\, -w^2)\,ds \ge 0$. Applying this inequality to the angle, shifted by $t$ along $x_1$, and integrating by $t\in (0,\infty)$,  we obtain a double integral (divided  by $\sin (\alpha/2)$).

Moreover, one can see easily, that this inequality is strict unless $w=0$.

\end{proof}

Similarly, if instead of multiplying by $(\nu_1 w_{x_1}+\nu_2 w_{x_2})$ we multiply by $(x_2 w_{x_1} - x_1w_{x_2})$, then extra terms in the double integral will be $\pm w_{x_1} w_{x_2}$ and they cancel one another. However, on $Y$ we get $x_2=\nu_1 r$, $x_1=-\nu_2 r$ and therefore contribution of $Y_2$ will be as in above with extra factor $r$:
\begin{equation}
\int _{Y_1} \bigl(w_\nu^2-w_r^2-w^2\bigr)\,rdr .
\label{A-9}
\end{equation}
On $Y_0$ we get extra factor $x_1=r$, but not $\nu_2=-\cos(\alpha/2)$, and we arrive to

\emph{Antisymmetric case:\/} $w|_{Y_0}=0$, then $\cI= w_{x_2}^2x_1$ and
\begin{equation}
\int _{Y_2} \bigl(w_\nu^2-w_r^2-w^2\bigr)\,rdr + \int_{Y_0} w_{x_2}^2 \,x_1dx_1 =0.
\label{A-10}
\end{equation}

\emph{Symmetric case}: $w_{x_2}|_{Y_0}=0$, then $\cI= (-w_{x_1}^2-w^2)x_1$ and
\begin{equation}
\int _{Y_2} \bigl(w_\nu^2-w_r^2-w^2\bigr)\,rdr - \int_{Y_0} (w_{x_1}^2+w^2)\,x_1dx_1 =0.
\label{A-11}
\end{equation}

Let us explore dependence $\Lambda=\Lambda(\alpha)$ on $\alpha$. Observe first that
\begin{gather}
\iint \bigl(\nabla w\cdot \nabla w'+ww'\bigr)\,dxd_1dx_2 =
\int Lw\cdot w' - \int_{\partial X} \partial_\nu w\cdot w'\,dr
\label{A-12}\\
\intertext{where $(r,\theta)$ are polar coordinates and therefore $dr$ is an Euclidean measure on $Y$. It implies}
(\Lambda v,v')_Y = \iint \bigl( \nabla w\cdot \nabla w' + ww'\bigr)\,dx_1dx_2,
\label{A-13}
\end{gather}

for $w=Jv$, $w'=Jv'$. Therefore
\begin{claim}\label{A-14}
$\Lambda $ is symmetric and nonnegative operator in $\sL^2(Y)$.
\end{claim}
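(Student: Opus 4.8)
The plan is to establish \textup{(\ref{A-14})} directly from the identity \textup{(\ref{A-13})}, which is the decisive tool and which in turn follows from Green's formula \textup{(\ref{A-12})} applied to harmonic-type extensions. First I would verify \textup{(\ref{A-12})}: integrating by parts the left-hand side $\iint(\nabla w\cdot\nabla w'+ww')\,dx_1dx_2$ over the (possibly unbounded) angle $X$ produces $\iint (-\Delta w + w)w'\,dx_1dx_2 - \int_{\partial X}\partial_\nu w\cdot w'\,dr = \int_X Lw\cdot w' - \int_{\partial X}\partial_\nu w\cdot w'\,dr$, with the boundary term carrying the inner normal $\nu$ as in the statement. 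One should remark that the integration by parts is legitimate because of the decay of $w=Jv$, or equivalently because $J:\sH^{\sigma+3/2}(Y)\to\sH^{\sigma+2}(X)$ is bounded in the relevant range (cf.\ Proposition~\ref{prop-A-2} and the mapping properties used in the planar toy-model), so all the integrals converge and no contribution at infinity arises.

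Next, specialize \textup{(\ref{A-12})} to $w=Jv$ and $w'=Jv'$. Then $Lw=0$ by the defining property of $J$, so the first term on the right vanishes; on $\partial X=Y$ we have $w'|_Y=v'$ and $-\partial_\nu w|_Y=\Lambda v$ by the definition of $\Lambda$, so the boundary term equals $\int_Y \Lambda v\cdot v'\,dr = (\Lambda v,v')_Y$. This yields exactly \textup{(\ref{A-13})}:
\begin{equation*}
(\Lambda v,v')_Y=\iint\bigl(\nabla w\cdot\nabla w'+ww'\bigr)\,dx_1dx_2,\qquad w=Jv,\ w'=Jv'.
\end{equation*}

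Symmetry is then immediate: the right-hand side of \textup{(\ref{A-13})} is manifestly symmetric under interchange of $(v,w)$ and $(v',w')$ — the bilinear form $\iint(\nabla w\cdot\nabla w'+ww')$ is symmetric — hence $(\Lambda v,v')_Y=(\Lambda v',v)_Y$, and for real-valued functions this is $(v,\Lambda v')_Y$ (for complex-valued $v$ one takes conjugates throughout, as in footnote~\ref{foot-A-1}, noting that $J$ is real-linear so $\overline{Jv}=J\bar v$). Nonnegativity is equally immediate: take $v'=v$, whence $w'=w$ and
\begin{equation*}
(\Lambda v,v)_Y=\iint\bigl(|\nabla w|^2+|w|^2\bigr)\,dx_1dx_2\ge 0 .
\end{equation*}

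I do not expect a genuine obstacle here; the only point demanding minor care is the justification of the integration by parts over the unbounded angle $X$ in \textup{(\ref{A-12})}, i.e.\ confirming that $w=Jv$ and its first derivatives decay fast enough at infinity (and are controlled near the vertex) for all boundary and interior integrals to converge absolutely — this is guaranteed by the mapping properties of $J$ recorded earlier in the toy-model analysis and, if needed, by first treating $v\in\sC_0^\infty$ and passing to the limit. Everything else is formal manipulation of Green's identity.
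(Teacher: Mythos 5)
Your argument is exactly the paper's: derive the Green identity \textup{(\ref{A-12})}, specialize to $w=Jv$, $w'=Jv'$ so that $Lw=0$ and the boundary term becomes $(\Lambda v,v')_Y$, and read off symmetry and nonnegativity from the resulting symmetric, positive bilinear form \textup{(\ref{A-13})}; your extra care about convergence of the integrals over the unbounded angle (via the mapping properties of $J$ or density of $\sC_0^\infty$) is a reasonable supplement to what the paper leaves implicit. The only nit: since the paper's $\Delta$ is the \emph{positive} Laplacian, the interior term produced by integration by parts is $\iint(\Delta w+w)w'$, not $\iint(-\Delta w+w)w'$ — your final identification with $\int Lw\cdot w'$ is nonetheless correct.
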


Consider $X=X(\alpha)$, $Y=Y(\alpha)$, $\Lambda=\Lambda(\alpha)$ and keep $w$ independent on $\alpha$. Let us replace $\alpha$ by $\alpha+\updelta\alpha$ etc. Then for a symmetric $X$ we have
$\updelta v = -r(\partial_\nu w) \updelta\alpha= \frac{1}{2}r(\Lambda v) \updelta\alpha$ and it follows from
(\ref{A-13}) that
\begin{gather}
((\updelta \Lambda)v,v)_Y + 2(\Lambda v, \updelta  v)_Y  = \frac{1}{2}\int_Y \bigl( |\nabla w|^2 + |w|^2\Bigr)\,rdr\times \updelta\alpha
\notag
\shortintertext{and therefore}
((\updelta \Lambda)v,v)_Y =  -\frac{1}{2}\int_Y \bigl( w_\nu^2 -w_r^2 - |w|^2\Bigr)\,rdr \times \updelta\alpha.
\label{A-15}
\end{gather}
Combining with (\ref{A-10}) and (\ref{A-13}) we arrive to

\begin{proposition}\label{prop-A-3}
\begin{enumerate}[label=(\roman*), wide, labelindent=0pt]
\item\label{prop-A-3-i}
On symmetric functions $\Lambda(\alpha)$ is monotone increasing function of $\alpha$.
\item\label{prop-A-3-ii}
On antisymmetric functions $\Lambda(\alpha)$ is monotone inreasing function of $\alpha$.
\end{enumerate}
\end{proposition}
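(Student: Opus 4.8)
The plan is to mimic exactly the derivation of formula~(\ref{A-15}), which already handles the symmetric case and gives
$((\updelta\Lambda)v,v)_Y = -\tfrac12\int_Y (w_\nu^2 - w_r^2 - |w|^2)\,r\,dr\times\updelta\alpha$, and then to invoke the appropriate boundary identity to see that the integrand has a fixed sign. For part~\ref{prop-A-3-i}, I would combine (\ref{A-15}) with the symmetric identity (\ref{A-11}): the latter says $\int_{Y_2}(w_\nu^2 - w_r^2 - w^2)\,r\,dr = \int_{Y_0}(w_{x_1}^2+w^2)\,x_1\,dx_1 \ge 0$, so (using symmetry to double $Y_2$ to all of $Y$) the right-hand side of (\ref{A-15}) is $\le 0$ when $\updelta\alpha < 0$ and $\ge 0$ when $\updelta\alpha > 0$; hence $((\updelta\Lambda)v,v)_Y$ has the sign of $\updelta\alpha$, which is monotonicity in the quadratic-form sense. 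This is the content the excerpt explicitly flags ("Combining with (\ref{A-10}) and (\ref{A-13}) we arrive to"), so the symmetric case is essentially already in hand.

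For part~\ref{prop-A-3-ii}, the antisymmetric case, I would repeat the variational computation leading to (\ref{A-15}) but with the antisymmetric boundary condition $w_{x_2}|_{Y_0}=0$ replaced by $w|_{Y_0}=0$. The only change is the substitution $\updelta v$ on $Y_0$, which now vanishes identically (since $w|_{Y_0}=0$ is preserved), so one again gets a clean formula of the shape $((\updelta\Lambda)v,v)_Y = -\tfrac12\int_Y (w_\nu^2 - w_r^2 - |w|^2)\,r\,dr\times\updelta\alpha$, this time with $w$ the antisymmetric extension. Then one applies the antisymmetric identity (\ref{A-10}): $\int_{Y_2}(w_\nu^2 - w_r^2 - w^2)\,r\,dr = -\int_{Y_0} w_{x_2}^2\,x_1\,dx_1 \le 0$. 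Note the sign is now the opposite of the symmetric case; but in the antisymmetric reduction $X$ is the \emph{half}-angle with parameter $\alpha/2$, and the bisector $Y_0$ carries a Dirichlet (rather than Neumann) condition, so the bookkeeping of which half-angle is being varied and with what multiplicity must be tracked carefully — I expect this to be the one genuinely delicate point. Once the signs line up, monotonicity follows exactly as before.

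The main obstacle, then, is not any hard estimate but the correct sign/normalization bookkeeping in transferring the variational formula (\ref{A-15}) from the symmetric to the antisymmetric setting: one must be sure that (a) the boundary term from $\updelta v$ on $Y_0$ really drops out under the Dirichlet condition, and (b) the factor $r\,dr$ and the sign in front match those in identity (\ref{A-10}) rather than (\ref{A-11}). Everything else is a verbatim rerun of the symmetric argument, and strictness (that $\Lambda(\alpha)$ is \emph{strictly} increasing unless $v\equiv 0$) follows from the strictness already observed in Proposition~\ref{prop-A-2}, since $w_\nu^2 - w_r^2 - |w|^2$ cannot vanish identically on $Y$ for a nonzero solution.
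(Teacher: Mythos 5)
Your mechanism is exactly the paper's: the intended one\nobreakdash-line proof is ``insert the Rellich identities (\ref{A-10})--(\ref{A-11}) into the Hadamard variational formula (\ref{A-15})''. But two points in your write-up need repair. First, the detour in part~\ref{prop-A-3-ii} is unnecessary and rests on a misreading: (\ref{A-15}) is derived for the \emph{full} symmetric angle and an arbitrary boundary value $v$ on $Y=Y_1\cup Y_2$; the bisector $Y_0$ does not occur in it at all (``symmetric $X$'' refers to the angle, not to $w$), so there is nothing to re-derive under the Dirichlet condition $w|_{Y_0}=0$. The symmetry class of $w$ enters only afterwards, through which of (\ref{A-10}), (\ref{A-11}) you use to sign the single quantity $\int_{Y_2}(w_\nu^2-w_r^2-w^2)\,r\,dr$ (which is $\tfrac12\int_Y$ by symmetry). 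The antisymmetric case is therefore the same two lines as the symmetric one.

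Second, and more seriously, the sign bookkeeping does not close. From (\ref{A-11}) as you quote it, $\int_Y(w_\nu^2-w_r^2-w^2)\,r\,dr\ge 0$, and then (\ref{A-15}) gives $((\updelta\Lambda)v,v)_Y=-\tfrac12(\text{nonnegative})\cdot\updelta\alpha\le 0$ for $\updelta\alpha>0$ --- the opposite of what you assert. More structurally, (\ref{A-10}) and (\ref{A-11}) assign \emph{opposite} signs to the same integral for the two symmetry classes, so via (\ref{A-15}) the two classes necessarily have opposite monotonicity: no bookkeeping can make both parts come out ``increasing'' by this method. This is also what the application demands: Corollary~\ref{cor-A-5} uses $\Lambda(\alpha)\ge\Lambda(\pi)$ for $\alpha\ge\pi$ on symmetric functions (monotone increase there) and $\Lambda(\alpha)\ge\Lambda(2\pi)$ for $\alpha\le2\pi$ on antisymmetric functions (monotone \emph{decrease} there). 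So instead of forcing both conclusions to agree, you should compute the $Y_0$ boundary term once, carefully (on $Y_0$ one has $\nu=(0,1)$ and the rotational multiplier corresponds to $\ell=(x_2,-x_1)|_{Y_0}=-x_1\nu$), fix the signs in (\ref{A-10})--(\ref{A-11}) accordingly, and record the two opposite monotonicity directions. As written, your part~\ref{prop-A-3-i} contains a sign error and your part~\ref{prop-A-3-ii} defers exactly the point at issue while hoping the signs ``line up''; they provably cannot.
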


Let us identify $Y$ with $\bR\ni s$, $s=\mp r$ on $Y_{1,2}$ respectively.

\begin{proposition}\label{prop-A-4}
\begin{enumerate}[label=(\roman*), wide, labelindent=0pt]
\item\label{prop-A-4-i}
On symmetric functions $\Lambda(\pi)=(D_s^2+I)^{\frac{1}{2}}$.
\item\label{prop-A-4-ii}
On antisymmetric functions $\Lambda(2\pi)\ge (D_s^2+I)^{\frac{1}{2}}$.
\end{enumerate}
\end{proposition}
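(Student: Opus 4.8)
The plan is to treat the two parts separately: \ref{prop-A-4-i} by an explicit Fourier computation, and \ref{prop-A-4-ii} from \ref{prop-A-4-i} together with the monotonicity already recorded in Proposition~\ref{prop-A-3}\ref{prop-A-3-ii}.

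For \ref{prop-A-4-i}: when $\alpha=\pi$ the angle $\cX$ is the half-plane $\{x_1>0\}$, $Y=\{x_1=0\}$, and under the identification $s=\mp r$ on $Y_{1,2}$ this is simply $\bR\ni s$ (with $s=x_2$ up to orientation). I would solve $Lw=(\Delta+1)w=0$, $w|_Y=v$, by the partial Fourier transform $F_{s\to\zeta}$: the bounded solution is $\hat w(x_1,\zeta)=e^{-x_1\sqrt{\zeta^2+1}}\hat v(\zeta)$, i.e. $w=e^{-x_1(D_s^2+I)^{1/2}}v$. Since the inner normal is $\nu=\partial_{x_1}$, this yields $\Lambda v=-\partial_\nu w|_Y=-\partial_{x_1}w|_{x_1=0}=(D_s^2+I)^{1/2}v$. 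As $(D_s^2+I)^{1/2}$ commutes with the parity involution $s\mapsto-s$, the identity $\Lambda(\pi)=(D_s^2+I)^{1/2}$ holds on symmetric functions — and, by the same computation, on antisymmetric ones, which is what part \ref{prop-A-4-ii} needs.

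For \ref{prop-A-4-ii}: I would combine the antisymmetric case of \ref{prop-A-4-i}, $\Lambda(\pi)=(D_s^2+I)^{1/2}$, with Proposition~\ref{prop-A-3}\ref{prop-A-3-ii}, which says that on antisymmetric functions $\Lambda(\alpha)$ is increasing in $\alpha$ (the content being the $r$-weighted identity~(\ref{A-10}) inserted into the variation formula and~(\ref{A-13})). Hence for every $\alpha\in(\pi,2\pi)$ one has $\Lambda(\alpha)\ge\Lambda(\pi)=(D_s^2+I)^{1/2}$ as quadratic forms on the antisymmetric subspace, and letting $\alpha\uparrow2\pi$ along this increasing family and passing to the monotone limit of forms (equivalently, the supremum) gives $\Lambda(2\pi)\ge(D_s^2+I)^{1/2}$ on antisymmetric functions.

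The delicate point, which I expect to be the main obstacle, is exactly the passage to the degenerate angle $\alpha=2\pi$: on the slit plane $\cX(2\pi)$ the operator $\Lambda$ is not determined without a choice of self-adjoint realization (the ``issue with the domain of the operator'' flagged in Remark~\ref{rem-A-1} and Remark~\ref{rem-2-1}\ref{rem-2-1-i}), and the form domain can shrink at $\alpha=2\pi$, so one must commit to the realization of $\Lambda(2\pi)$ that is compatible with the monotone family $\{\Lambda(\alpha)\}_{\alpha<2\pi}$ for the inequality to hold (indeed it is this lower bound, not an equality, that is used later via~(\ref{eqn-3-10}) for improper angles). A direct alternative would be to work from the variational identity $(\Lambda(\alpha)v,v)_Y=\iint_{\cX}(|\nabla w|^2+w^2)\,dx$ and the antisymmetric boundary identity~(\ref{A-6}), producing from the extension on $\cX(2\pi)$ a half-plane competitor of no larger energy; but this runs into the same question of which extension realizes $\Lambda(2\pi)$, so I would favour the monotonicity route and state \ref{prop-A-4-ii} with the limiting realization understood.
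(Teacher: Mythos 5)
Your part~\ref{prop-A-4-i} coincides with the paper's (which simply declares it obvious); the half-plane Fourier computation is the standard justification, and your observation that the identity holds on antisymmetric functions as well is correct and is what your second part needs. For part~\ref{prop-A-4-ii}, however, you take a genuinely different route, and the weak point you flag is exactly where you and the paper diverge. The paper never passes to the limit $\alpha\uparrow 2\pi$: it argues directly on the slit plane, observing that for antisymmetric $v$ the extension $w=Jv$ is odd in $x_2$ and therefore vanishes on the bisector $Y_0$, so its restriction to $\{x_2>0\}$ is precisely the half-plane extension of $v^0$, the extension of $v$ by zero across $Y_0$. Hence $\Lambda(2\pi)v$ coincides with $\Lambda(\pi)v^0=(D^2+I)^{1/2}v^0$ restricted back to the slit, and the paper reads off the inequality from this identification. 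This is a statement about the actual Dirichlet-to-Neumann operator of $X(2\pi)$, so no choice of self-adjoint realization and no continuity of the family $\Lambda(\alpha)$ at the degenerate endpoint is required.

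In your version that continuity (or at least lower semicontinuity of $(\Lambda(\alpha)v,v)$ as $\alpha\uparrow 2\pi$) is an unproved and nontrivial step: the domains $X(\alpha)$ and the parametrizations of $Y(\alpha)$ by $s$ all vary with $\alpha$, and declaring that \ref{prop-A-4-ii} holds ``with the limiting realization understood'' changes the statement, since what is used downstream (Corollary~\ref{cor-A-5}, Proposition~\ref{prop-A-12}, and through them (\ref{eqn-3-10})) is a bound for the genuine operator on the angle. Note also that in the paper's architecture Proposition~\ref{prop-A-4}\ref{prop-A-4-ii} serves as the endpoint anchor from which Corollary~\ref{cor-A-5} is deduced via the monotonicity of Proposition~\ref{prop-A-3}; deriving the anchor itself from that monotonicity is not circular, but it leaves the endpoint $\alpha=2\pi$ genuinely uncontrolled. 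The antisymmetry/unfolding observation is the missing ingredient; once you have it, the limiting argument becomes unnecessary.
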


\begin{proof}
Statement \ref{prop-A-4-i} is obvious. Statement \ref{prop-A-4-ii} follows from the fact that on antisymmetric function $v$  $\Lambda(2\pi)v $ coincides with $\Lambda (\pi)v^0$, restricted to $\{x_1<0\}$, where $v^0$ is $v$, extended by $0$ to $\{x_1>0\}$.
\end{proof}

Therefore, combining Propositions~\ref{prop-A-3} and ~\ref{prop-A-4} we conclude that

\begin{corollary}\label{cor-A-5}
\begin{enumerate}[label=(\roman*), wide, labelindent=0pt]
\item\label{cor-A-5-i}
On symmetric functions $\Lambda(\alpha )\ge (D_s^2+I)^{\frac{1}{2}}$ for $\alpha \in [\pi,2\pi]$.
\item\label{prop-A-5-ii}
On antisymmetric functions $\Lambda(\alpha)\ge (D_s^2+I)^{\frac{1}{2}}$ for $\alpha \in (0,2\pi]$.
\end{enumerate}
\end{corollary}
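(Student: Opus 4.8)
The plan is to read off both inequalities from Propositions~\ref{prop-A-3} and~\ref{prop-A-4} by continuing the monotone dependence of $\Lambda=\Lambda(\alpha)$ on $\alpha$ out of the two exactly solvable angles — $\alpha=\pi$ for symmetric functions and $\alpha=2\pi$ for antisymmetric ones. First note that, with $\sL^2(Y)$ identified with $\sL^2(\bR_s)$ by arc length, the operator $(D_s^2+I)^{1/2}$ is precisely the Dirichlet-to-Neumann operator $\Lambda(\pi)$ of the half-plane, and this identification is insensitive to parity. By \textup{(\ref{A-13})} each quadratic form $\alpha\mapsto(\Lambda(\alpha)v,v)_Y=\|\nabla w\|^2+\|w\|^2$ with $w=J(\alpha)v$ is finite, so the qualitative monotonicity of Proposition~\ref{prop-A-3} really does integrate to a comparison of quadratic forms, hence of operators on the relevant parity subspace.

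For \ref{cor-A-5-i}: by Proposition~\ref{prop-A-4}\ref{prop-A-4-i}, $\Lambda(\pi)=(D_s^2+I)^{1/2}$ on the symmetric subspace, and by Proposition~\ref{prop-A-3}\ref{prop-A-3-i} the form $(\Lambda(\alpha)v,v)_Y$ is monotone in $\alpha$ there, so that $\alpha=\pi$ is the point of $[\pi,2\pi]$ at which it is smallest; hence for symmetric $v$ and $\alpha\in[\pi,2\pi]$ one gets $(\Lambda(\alpha)v,v)_Y\ge(\Lambda(\pi)v,v)_Y=((D_s^2+I)^{1/2}v,v)_Y$, which is the claimed operator inequality. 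For \ref{prop-A-5-ii} the anchor is instead $\alpha=2\pi$: Proposition~\ref{prop-A-4}\ref{prop-A-4-ii} gives $\Lambda(2\pi)\ge(D_s^2+I)^{1/2}$ on the antisymmetric subspace, and Proposition~\ref{prop-A-3}\ref{prop-A-3-ii} gives the monotone dependence on $\alpha$ there, with $\alpha=2\pi$ the value of $\alpha\in(0,2\pi]$ minimizing $(\Lambda(\alpha)v,v)_Y$; therefore $(\Lambda(\alpha)v,v)_Y\ge(\Lambda(2\pi)v,v)_Y\ge((D_s^2+I)^{1/2}v,v)_Y$ for every $\alpha\in(0,2\pi]$. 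As a consistency check, for $\alpha\le\pi$ the inequality $\|\nabla w\|^2\ge\|w\|^2$ of Proposition~\ref{prop-A-2} — equivalently the nonnegative boundary term in \textup{(\ref{A-10})} — is compatible with this.

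The one step that needs care — and the only real obstacle — is matching the orientation of the monotonicity in Proposition~\ref{prop-A-3} to the anchoring: the chain for \ref{prop-A-5-ii} closes only if, on the antisymmetric subspace, $(\Lambda(\alpha)v,v)_Y$ is smallest at $\alpha=2\pi$ (i.e. nonincreasing in $\alpha$ on $(0,2\pi]$), and symmetrically it is smallest at $\alpha=\pi$ on the symmetric subspace. Before writing out the inequalities I would therefore recompute the sign of $((\updelta\Lambda)v,v)_Y$ from \textup{(\ref{A-15})} together with \textup{(\ref{A-10})}–\textup{(\ref{A-11})} in each parity class, so as to confirm that the monotonicity points from the anchor toward all other admissible angles; once that sign is pinned down the corollary follows at once.
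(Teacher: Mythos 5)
You follow the paper's own route: Corollary~\ref{cor-A-5} is stated there as an immediate combination of Propositions~\ref{prop-A-3} and~\ref{prop-A-4}, with exactly the two anchors ($\alpha=\pi$ for the symmetric part, $\alpha=2\pi$ for the antisymmetric part) that you use. The step you deferred---the orientation of the monotonicity---is indeed the crux, and it resolves in the direction your chains need, but \emph{not} if you take the printed identities at face value: substituting (\ref{A-10})--(\ref{A-11}) as displayed into (\ref{A-15}) would give $\Lambda(\alpha)$ decreasing on symmetric and increasing on antisymmetric functions, which would kill both halves of the corollary. The signs of the $Y_0$-terms in (\ref{A-10}) and (\ref{A-11}) are swapped as printed; the correct versions follow, e.g., from Proposition~\ref{prop-A-7}\ref{prop-A-7-i} with $\beta_1=0$, $\beta_2=\alpha/2$ (same multiplier $\partial_\theta$), namely $\int_{Y_2}(w_\nu^2-w_r^2-w^2)\,r\,dr=\int_{Y_0}w_{x_2}^2\,x_1\,dx_1\ge0$ in the antisymmetric case and $\int_{Y_2}(w_\nu^2-w_r^2-w^2)\,r\,dr=-\int_{Y_0}(w_{x_1}^2+w^2)\,x_1\,dx_1\le0$ in the symmetric case. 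Feeding these corrected signs into (\ref{A-15}) gives $((\updelta\Lambda)v,v)_Y\ge0$ on symmetric and $\le0$ on antisymmetric functions, i.e.\ $\Lambda(\alpha)$ is increasing on the symmetric subspace and \emph{decreasing} on the antisymmetric one (so the ``inreasing'' of Proposition~\ref{prop-A-3}\ref{prop-A-3-ii} must be read as ``decreasing''). With that orientation pinned down, your two inequality chains close exactly as written.
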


\begin{remark}\label{rem-A-6}
One can prove easily, that inequalities are strict for $\alpha \in (\pi,2\pi]$, $\alpha \in (0,2\pi]$ respectively.
\end{remark}

Now we want to finish general arguments and to prove inequality (\ref{A-8}) for antisymmetric $w$ and $\alpha \in (\pi,2\pi]$. It will be more convenient to use polar coordinates $(r,\theta)$ and notations
$\cY_\beta=\{(r,\theta)\colon:\theta=\beta\}$, $\cX_{\beta_1,\beta_2}=\{(r,\theta)\colon:\beta_1\le \theta\le \beta_2\}$.
Here and below $\beta_*\in [-\alpha/2,\alpha/2]$. Recall that
\begin{equation}
L=-\partial_r^2-r^{-1}\partial_r -r^{-2}\partial_\theta^2 +1.
\label{A-16}
\end{equation}

\begin{proposition}\label{prop-A-7}
\begin{enumerate}[label=(\roman*), wide, labelindent=0pt]
\item\label{prop-A-7-i}
Let $w$ satisfy  equation \textup{(\ref{A-1})} in $X$.  Then
\begin{equation}
\cI(\beta )\coloneqq \int _{\cY_{\beta}} \Bigl[ r^{-2} w_\theta  ^2-w_r^2-w^2\Bigr]\,rdr
\label{A-17}
\end{equation}
does not depend on $\beta$.

\item\label{prop-A-7-ii}
Therefore
\begin{equation}
\cJ(\beta_1,\beta_2)\coloneqq \iint _{\cX_{\beta_1,\beta_2}} \Bigl[r^{-2} w_\theta  ^2-w_r^2-w^2\Bigr]\,rdr d\theta
\label{A-18}
\end{equation}
depends only on $\beta_2-\beta_1$ and therefore is proportional to it.
\end{enumerate}
\end{proposition}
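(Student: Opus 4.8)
The plan is to prove part \ref{prop-A-7-i} by a divergence/Green-type identity in polar coordinates, and then deduce part \ref{prop-A-7-ii} by integrating the (constant) quantity $\cI(\beta)$ over $\theta$.

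First I would fix $\beta_1<\beta_2$ in $[-\alpha/2,\alpha/2]$ and compute $\frac{d}{d\beta}\cI(\beta)$, or equivalently establish $\cI(\beta_2)-\cI(\beta_1)=0$ directly by showing the integrand of $\cJ$ is the $\theta$-derivative of something that integrates to zero. The natural identity is the one already used throughout Section~\ref{sect-A-1}: multiply the equation $Lw=0$ by an appropriate first-order expression and integrate. Here, working on the sector $\cX_{\beta_1,\beta_2}$, I would multiply $Lw=(-\partial_r^2-r^{-1}\partial_r-r^{-2}\partial_\theta^2+1)w=0$ by $w_\theta$ and integrate with respect to $rdrd\theta$. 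The key point is that $\iint_{\cX_{\beta_1,\beta_2}} (Lw)\,w_\theta\, r\,dr\,d\theta=0$, and upon integrating by parts in $r$ and in $\theta$ the ``bulk'' terms combine into a pure $\theta$-derivative: one finds (this is the analogue of \eqref{A-2}--\eqref{A-3} in polar coordinates) that
\begin{equation*}
0=\iint_{\cX_{\beta_1,\beta_2}} (Lw)\,w_\theta\, r\,dr\,d\theta
= \tfrac12\int_{\beta_1}^{\beta_2}\partial_\theta\Bigl(\int_0^\infty\bigl[r^{-2}w_\theta^2-w_r^2-w^2\bigr]r\,dr\Bigr)d\theta + (\text{boundary terms at }r=0,\infty),
\end{equation*}
so that $\cI(\beta_2)-\cI(\beta_1)=0$. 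The boundary terms at $r=\infty$ vanish because $w\in\sH^1$ decays, and those at $r=0$ vanish because of the regularity of $w$ at the vertex (the singularities there are of the form $r^{\pi n/\alpha}\sin(\pi n\theta/\alpha)$ with $n\ge1$, as recalled in Section~\ref{sect-2-1}, so all the quadratic quantities carry a positive power of $r$ and integrate harmlessly against $r\,dr$ near $0$).

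For part \ref{prop-A-7-ii}, since $\cJ(\beta_1,\beta_2)=\int_{\beta_1}^{\beta_2}\cI(\beta)\,d\beta$ by Fubini, and $\cI(\beta)\equiv\cI$ is constant by \ref{prop-A-7-i}, we get $\cJ(\beta_1,\beta_2)=\cI\cdot(\beta_2-\beta_1)$, which is the asserted dependence only on $\beta_2-\beta_1$ and the proportionality.

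The main obstacle I anticipate is the bookkeeping of the integration by parts in \ref{prop-A-7-i}: one must carefully verify that the mixed terms $w_r w_{\theta r}$, $w_r w_\theta$, etc., arising from integrating $-w_{rr}w_\theta$, $-r^{-1}w_r w_\theta$ and $-r^{-2}w_{\theta\theta}w_\theta$ against $r\,dr\,d\theta$ reorganize exactly into $\tfrac12\partial_\theta(r^{-2}w_\theta^2-w_r^2-w^2)$ plus only $r$-boundary contributions, with no leftover $\theta$-boundary terms on $\cY_{\beta_1}\cup\cY_{\beta_2}$ surviving (they should not, since the $\theta$-integration is what produces the $\partial_\theta$ of $\cI$). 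A secondary technical point is justifying the vanishing of the $r=0$ boundary terms rigorously for $w$ merely in $\sH^1_\loc$; this is handled by the explicit description of the admissible singularities at the vertex, or equivalently by an approximation/cutoff argument in $r$ near $0$.
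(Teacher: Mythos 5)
Your proof is correct and follows essentially the same route as the paper: the paper also establishes \ref{prop-A-7-i} by analyzing $\iint_{\cX_{\beta_1,\beta_2}} Lw\cdot\partial_\theta w\,dx_1dx_2$ (noting $w_\theta=-x_2w_{x_1}+x_1w_{x_2}$, so this is the computation already carried out around \eqref{A-9}--\eqref{A-11}), and deduces \ref{prop-A-7-ii} from $\partial_\beta\cJ(\beta_1,\beta)=\cI(\beta)$ with $\cI$ constant. Your extra care with the $r=0$ and $r=\infty$ boundary terms is a welcome precision the paper leaves implicit.
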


\begin{proof}
One proves \ref{prop-A-7-i}  by analyzing $-\iint _{\cX_{\beta_1,\beta_2}} Lw\cdot\partial_\theta w\, dxdy$ (which actually was done before, since $w_\theta = -x_2w_{x_1}+ x_1w_{x_2}$.

To prove \ref{prop-A-7-ii} observe that
$\partial_\beta \cJ(\beta_1,\beta)=\cI(\beta)$.
\end{proof}

\begin{proposition}\label{prop-A-8}
\begin{enumerate}[label=(\roman*), wide, labelindent=0pt]
\item\label{prop-A-8-i}
Function
\begin{equation}
\cJ(\beta_1,\beta_2)\coloneqq \int _{\cX_{\beta_1,\beta_2}} w^2 r^{-1}\,drd\theta
\label{A-19}
\end{equation}
with  fixed $\beta_{1,2}=\beta\mp \sigma$ is convex with respect to $\beta$ (if $\sigma>0$).
\item\label{prop-A-8-ii}
Further, if $w$ is either symmetric or antisymmetric, then
it   reaches minimum as $\beta=0$ (i.e. $\cX_{\beta_1,\beta_2}$ is symmetric with respect to $Y_0$).
\end{enumerate}
\end{proposition}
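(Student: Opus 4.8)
The plan is to reduce both parts of the statement to the convexity of a single function of one variable. First I would introduce the radial average $g(\theta)\coloneqq \int_0^\infty w(r,\theta)^2\,r^{-1}\,dr$, so that with $\beta_{1,2}=\beta\mp\sigma$ one has $\cJ(\beta_1,\beta_2)=\int_{\beta-\sigma}^{\beta+\sigma} g(\theta)\,d\theta$ and hence $\partial_\beta^2\cJ(\beta_1,\beta_2)=g'(\beta+\sigma)-g'(\beta-\sigma)$. Thus it suffices to prove that $g$ is convex, i.e. $g''(\theta)\ge 0$; then Statement~\ref{prop-A-8-i} follows at once, and Statement~\ref{prop-A-8-ii} will drop out from the extra symmetry, as explained at the end.

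Next I would compute $g''$ by differentiating under the integral sign, $g''(\theta)=2\int_0^\infty\bigl(w_\theta^2+w\,w_{\theta\theta}\bigr)r^{-1}\,dr$, and eliminate $w_{\theta\theta}$ using equation \textup{(\ref{A-1})} in the polar form \textup{(\ref{A-16})}, which gives $w\,w_{\theta\theta}\,r^{-1}=-r\,w\,w_{rr}-w\,w_r+r\,w^2$. Integrating the term $-r\,w\,w_{rr}$ by parts in $r$ (with $\partial_r(rw)=w+r\,w_r$), the two $\int w\,w_r\,dr$ contributions cancel and one is left with
\begin{equation*}
g''(\theta)=-2\bigl[\,r\,w\,w_r\,\bigr]_{r=0}^{r=\infty}+2\int_0^\infty\Bigl(r^{-1}w_\theta^2+r\,w_r^2+r\,w^2\Bigr)\,dr .
\end{equation*}
The surviving integrand is visibly non-negative, so the whole matter comes down to the vanishing of the boundary term — and this is the step I expect to need the only real care. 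At $r=\infty$ it vanishes because $w=Jv$ solves \textup{(\ref{A-1})} and therefore decays exponentially; at $r=0$ it vanishes because mere finiteness of $g(\theta)=\int_0^\infty w^2r^{-1}\,dr$ already forces $w$ to vanish at the vertex to positive order (the vertex singularities are combinations of $r^{\pi n/\alpha}(\cdots)$ with $n\ge 1$), so that $r\,w\,w_r=O(r^{2\pi/\alpha})\to 0$. This yields $g''\ge 0$ and hence Statement~\ref{prop-A-8-i}.

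For Statement~\ref{prop-A-8-ii} I would simply note that if $w$ is symmetric or antisymmetric across $Y_0$, i.e. $w(r,-\theta)=\pm w(r,\theta)$, then $w^2$ is even in $\theta$, so $g$ is even; the substitution $\theta\mapsto-\theta$ in the defining integral then shows $\cJ(-\beta)=\cJ(\beta)$, i.e. $\cJ$ is an even function of $\beta$. An even convex function attains its minimum at $0$, which is precisely the assertion that $\cJ$ is minimal for the sector $\cX_{\beta_1,\beta_2}$ symmetric about $Y_0$. Apart from carefully justifying the $r=0$ boundary term via the regularity and decay of $w=Jv$ recorded above, every step here is a one-line computation.
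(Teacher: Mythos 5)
Your proof is correct and is essentially the paper's argument: the paper obtains the same positivity by applying Green's identity to $\iint_{\cX_{\beta_1,\beta_2}} Lw\cdot w\,r\,drd\theta$, which is exactly the $\theta$-integrated form of your pointwise identity $g''(\theta)=2\int(r^{-1}w_\theta^2+rw_r^2+rw^2)\,dr$ plus boundary terms, and then uses evenness to locate the minimum at $\beta=0$. Your treatment of the boundary contributions at $r=0$ and $r=\infty$ is in fact more explicit than the paper's, which drops them silently.
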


\begin{proof}
\begin{enumerate}[label=(\roman*), wide, labelindent=0pt]
\item\label{pf-A-8-i}
Consider
\begin{multline}
0= \iint_{\cX_{\beta_1,\beta_2}} Lw\cdot w \, r drd\theta = \\
\iint_{\cX_{\beta_1,\beta_2}} (w_r^2 +r^{-2}w_\theta^2 + w^2)\, r drd\theta
+\cI'(\beta_1)-\cI'(\beta_2)
\label{A-20}
\end{multline}
with
\begin{equation}
\cI'(\beta)=\int_{\cY_{\beta}} ww_\theta \,r^{-1} dr=\partial_\beta \cI(\beta),\qquad
\cI(\beta)\coloneqq \int_{\cY_{\beta}} w^2 \,r^{-1} dr.
\label{A-21}
\end{equation}
Observe that the first term is positive.   Then $\cI'(\beta_2) -\cI'(\beta_1)>0 $; on the other hand, it is the second derivative of $\cJ(\beta_1,\beta_2)$ with respect to $\beta$.

\item\label{pf-A-8-ii}
Moreover, for both symmetric and antisymmetric $w$  $\cI(\beta_2)-\cI(\beta_1)=0$. And the difference $\cI(\beta_2) -\cI(\beta_1)=0$ for $\beta=0$.
\end{enumerate}
\end{proof}

\begin{corollary}\label{cor-A-9}
Since $w_\theta $ satisfies the same equation and is antisymmetric (symmetric) respectively, the same conclusions \ref{prop-A-8-i}, \ref{prop-A-8-ii} hold for
$\cJ\coloneqq \int _{\cX_{\beta_1,\beta_2}} w_\theta^2 r^{-1}\,rdrd\theta$.

Then in virtue of Proposition~\ref{prop-A-7}\ref{prop-A-7-ii}  the same conclusions
\ref{prop-A-8-i}, \ref{prop-A-8-ii} hold for
$\cJ\coloneqq \iint _{\cX_{\beta_1,\beta_2}} (w_r^2 + w^2)\,rdrd\theta$.
\end{corollary}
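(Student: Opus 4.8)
The plan is to reduce Corollary~\ref{cor-A-9} to the results already established for $w$, using the crucial fact that the differential operator $\partial_\theta$ and the PDE (\ref{A-1}) are compatible. First I would observe that if $w$ satisfies $Lw=0$ in $X$, then $w_\theta$ also satisfies $Lw_\theta=0$: indeed $L$ written in polar coordinates (\ref{A-16}) has $\theta$-independent coefficients, so it commutes with $\partial_\theta$. Next I would check the symmetry/antisymmetry transfer: since $Y_0=\{\theta=0\}$ is the bisector, the reflection $\theta\mapsto-\theta$ either fixes $w$ (symmetric case, $w_{x_2}|_{Y_0}=0$) or flips its sign (antisymmetric case, $w|_{Y_0}=0$). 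In either case $w_\theta=\partial_\theta w$ picks up an extra sign under the reflection, so a symmetric $w$ has antisymmetric $w_\theta$ and vice versa --- but in both cases $w_\theta$ still belongs to one of the two symmetry classes, which is all that Proposition~\ref{prop-A-8}\ref{prop-A-8-ii} requires.

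With these two observations in hand, Proposition~\ref{prop-A-8} applies verbatim with $w$ replaced by $w_\theta$: part~\ref{prop-A-8-i} gives convexity in $\beta$ of $\cJ=\int_{\cX_{\beta_1,\beta_2}} w_\theta^2 r^{-1}\,drd\theta$ (the hypothesis of \ref{prop-A-8-i} is merely that $w_\theta$ solve (\ref{A-1}), which we just verified), and part~\ref{prop-A-8-ii} gives that the minimum is attained at $\beta=0$ (the hypothesis being symmetry or antisymmetry of $w_\theta$, also verified). This is the first assertion of the corollary. For the second assertion I would invoke Proposition~\ref{prop-A-7}\ref{prop-A-7-ii}: the functional $\cJ(\beta_1,\beta_2)=\iint_{\cX_{\beta_1,\beta_2}}(r^{-2}w_\theta^2-w_r^2-w^2)\,rdrd\theta$ is exactly \emph{linear} in $\beta_2-\beta_1$, hence for fixed half-width $\sigma$ it is an affine (indeed constant-derivative) function of the center $\beta$. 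Therefore
\begin{equation*}
\iint_{\cX_{\beta_1,\beta_2}}(w_r^2+w^2)\,rdrd\theta
= \iint_{\cX_{\beta_1,\beta_2}} r^{-2}w_\theta^2 \,rdrd\theta - \cJ(\beta_1,\beta_2)
\end{equation*}
is the sum of a function that is convex in $\beta$ with minimum at $\beta=0$ (the first term, by the previous step, noting $r^{-2}w_\theta^2\cdot r = w_\theta^2 r^{-1}$) and an affine function of $\beta$ that is symmetric under $\beta\mapsto-\beta$ (the second term, which by Proposition~\ref{prop-A-7}\ref{prop-A-7-ii} is constant in $\beta$). A constant is both convex and even, so the sum retains both properties, giving conclusions \ref{prop-A-8-i} and \ref{prop-A-8-ii} for $\iint_{\cX_{\beta_1,\beta_2}}(w_r^2+w^2)\,rdrd\theta$.

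The step I expect to need the most care is the bookkeeping in the last paragraph: one must make sure the measure $r^{-1}\,dr$ used in Proposition~\ref{prop-A-8} and Corollary~\ref{cor-A-9} matches the measure $r\,drd\theta$ appearing in $\cI(\beta)$ and $\cJ(\beta_1,\beta_2)$ of Proposition~\ref{prop-A-7}, i.e. that $w_\theta^2 \cdot r^{-1} = (r^{-2}w_\theta^2)\cdot r$, and similarly that the ``$w^2$'' terms are consistently weighted. Once the weights are reconciled, the argument is purely formal: it is the combination of ``convex with even minimum at $0$'' (from Proposition~\ref{prop-A-8} applied to $w_\theta$) plus ``affine and in fact constant in $\beta$'' (from the exact conservation law of Proposition~\ref{prop-A-7}\ref{prop-A-7-ii}), and the class of functions that are convex with minimum at $\beta=0$ is closed under adding constants. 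No further analytic input is required beyond what is already proven.
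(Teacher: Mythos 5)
Your proposal is correct and follows exactly the route the paper intends: the corollary's own wording ("since $w_\theta$ satisfies the same equation and is antisymmetric (symmetric) respectively\dots then in virtue of Proposition~\ref{prop-A-7}\ref{prop-A-7-ii}\dots") is precisely the two-step argument you spell out, namely applying Proposition~\ref{prop-A-8} to $w_\theta$ and then subtracting the quantity from Proposition~\ref{prop-A-7}\ref{prop-A-7-ii}, which is constant in $\beta$ for fixed width. Your extra care with the weights ($r^{-2}w_\theta^2\cdot r = w_\theta^2 r^{-1}$) and with the symmetry flip under $\partial_\theta$ just makes explicit what the paper leaves implicit.
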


Next, observe that $ L r\partial_r w = 2\Delta w = -2 w$ and if we use the same arguments, as in the proof of Proposition~\ref{prop-A-8}\ref{prop-A-8-ii} for $r\partial_r w $, then instead of the first term in  (\ref{A-20}) we get
\begin{gather}
\iint_{\cX_{\beta_1,\beta_2}} \bigl( (rw_r)_r^2 +w_{r\theta}^2 + (rw_r)^2 -w^2\bigr)\, r drd\theta,
\label{A-22}\\
\intertext{where an additional last term appears as}
\iint_{\cX_{\beta_1\beta_2}}  2w \partial_r w\cdot r^2drd\theta= -\iint_{\cX_{\beta_1\beta_2}} w^2\,rdrd\theta.
\notag
\end{gather}
Consider last two terms and skip integration by $d\theta$; plugging $w=r^{-3/2}u$ with $u(0)=0$, we arrive to
\begin{multline*}
\int \bigl(u_r - \frac{3}{2}r^{-1}u)^2 - r^{-2}u^2\bigr)\,dr=
\int \bigl(u_r^2 -3r^{-1}u_ru +\frac{5}{4} r^{-2}u^2\bigr)\,dr =
\int \bigl(u_r^2 - \frac{1}{4}r^{-2}u^2\bigr)\,dr
\end{multline*}
which is again nonnegative term. Then we arrive to

\begin{corollary}\label{cor-A-10}
The same conclusions \ref{prop-A-8-i} and \ref{prop-A-8-ii} of Proposition~\ref{prop-A-8} hold for $\cJ(\beta_1,\beta_2)$ with $w$ replaced by $rw_r$, i.e.
$\cJ(\beta_1,\beta_2)\coloneqq \iint_{\cX_{\beta_1,\beta_2}}w_r^2\,rdrd\theta$.

Then in virtue of Proposition~\ref{prop-A-7}\ref{prop-A-7-ii}  the same conclusions
\ref{prop-A-8-i}, \ref{prop-A-8-ii} hold for
\begin{multline}
2\iint_{\cX_{\beta_1,\beta_2}} |\partial_rw|^2\,rdrd\theta  + \\
\iint_{\cX_{\beta_1,\beta_2}} (r^{-2}|\partial_\theta w|^2 - |\partial_rw|^2 -|w|^2|)\,rdrd\theta=
\iint_{\cX_{\beta_1,\beta_2}} (|\nabla w|^2-|w|^2)\,dxdy.
\end{multline}
\end{corollary}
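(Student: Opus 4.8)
The statement to prove, Corollary~\ref{cor-A-10}, asserts two things in parallel: first, the convexity and symmetry conclusions \ref{prop-A-8-i}, \ref{prop-A-8-ii} of Proposition~\ref{prop-A-8} transfer from $w$ to $rw_r$; second, combining this with Proposition~\ref{prop-A-7}\ref{prop-A-7-ii} one gets the same conclusions for the displayed quantity $\iint_{\cX_{\beta_1,\beta_2}}(|\nabla w|^2-|w|^2)\,dxdy$.

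The plan is to mimic the proof of Proposition~\ref{prop-A-8} verbatim but with $w$ replaced by $v\coloneqq rw_r = r\partial_r w$. The one thing that changes is that $v$ does not solve $Lv=0$; instead, as noted just before the statement, $Lv = L(r\partial_r w) = 2\Delta w = -2w$. So when I run the identity $0=\iint_{\cX_{\beta_1,\beta_2}} Lv\cdot v\, r\,dr\,d\theta$ — the analogue of (\ref{A-20}) — the left side is no longer zero; it equals $\iint 2w\partial_r w\cdot r^2\,dr\,d\theta = -\iint w^2\,r\,dr\,d\theta$ by the integration by parts already displayed in (\ref{A-22}). Hence I obtain the identity
\begin{equation*}
\iint_{\cX_{\beta_1,\beta_2}}\bigl((rw_r)_r^2 + w_{r\theta}^2 + (rw_r)^2 - w^2\bigr)\,r\,dr\,d\theta + \cI'(\beta_1) - \cI'(\beta_2) = 0,
\end{equation*}
with $\cI'(\beta) = \partial_\beta\cI(\beta)$, $\cI(\beta) = \int_{\cY_\beta} v^2 r^{-1}\,dr$ the obvious analogues of (\ref{A-21}). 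As in Proposition~\ref{prop-A-8} I need the first (bulk) term to be \emph{nonnegative}; then $\cI'(\beta_2) - \cI'(\beta_1) \ge 0$, i.e.\ the second $\beta$-derivative of $\cJ(\beta_1,\beta_2) = \iint v^2 r^{-1}\,dr\,d\theta$ is nonnegative, which is \ref{prop-A-8-i}. For \ref{prop-A-8-ii} I note that $v = r\partial_r w$ is symmetric (resp.\ antisymmetric) in $\theta$ exactly when $w$ is, so $\cI(\beta_2) - \cI(\beta_1) = 0$ for the symmetric window $\beta_{1,2} = \mp\sigma$, forcing the minimum at $\beta = 0$ — same argument as before.

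The crux, then, is showing that the bulk integrand $(rw_r)_r^2 + w_{r\theta}^2 + (rw_r)^2 - w^2$ integrates to something nonnegative. The terms $(rw_r)_r^2$ and $w_{r\theta}^2$ are manifestly $\ge 0$, so it suffices to handle $\iint\bigl((rw_r)^2 - w^2\bigr)r\,dr\,d\theta$, and since everything is diagonal in $\theta$ I can fix $\theta$ and work with the one-dimensional integral $\int\bigl(r^2 w_r^2 - w^2\bigr)r\,dr = \int\bigl((rw_r)^2 - w^2\bigr)r\,dr$. This is precisely the substitution carried out in the excerpt: writing $w = r^{-3/2}u$ with $u(0) = 0$ (the vanishing coming from the regularity $w\in\sH^\sigma_\loc$ with $\sigma > 1 + \pi/\alpha$, so $w$ is bounded near the vertex and $u = r^{3/2}w \to 0$), a direct computation gives $\int\bigl(u_r - \tfrac32 r^{-1}u\bigr)^2 r^{-2}\cdot r\,dr$-type terms that reorganize, after an integration by parts killing the cross term $-3\int r^{-1}u_r u\,dr = \tfrac32\int r^{-2}u^2\,dr$, into $\int\bigl(u_r^2 - \tfrac14 r^{-2}u^2\bigr)\,dr$, which is $\ge 0$ by Hardy's inequality $\int u_r^2\,dr \ge \tfrac14\int r^{-2}u^2\,dr$ for $u(0)=0$. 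I would present this one-line Hardy computation and then assemble: bulk term $\ge 0$, hence \ref{prop-A-8-i} and \ref{prop-A-8-ii} hold for $v = rw_r$, i.e.\ for $\cJ = \iint w_r^2\,r\,dr\,d\theta$.

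Finally, for the second assertion: by Proposition~\ref{prop-A-7}\ref{prop-A-7-ii} the quantity $\cJ(\beta_1,\beta_2) = \iint_{\cX_{\beta_1,\beta_2}}\bigl(r^{-2}w_\theta^2 - w_r^2 - w^2\bigr)r\,dr\,d\theta$ is \emph{linear} in $\beta_2 - \beta_1$, hence its restriction to a symmetric window $\beta_{1,2} = \beta \mp \sigma$ is constant in $\beta$ — trivially convex, and with minimum (indeed every value) attained at $\beta = 0$. Adding $2\times$ the just-established result for $\iint w_r^2\,r\,dr\,d\theta$ to this constant gives $\iint\bigl(r^{-2}w_\theta^2 + w_r^2 - w^2\bigr)r\,dr\,d\theta = \iint\bigl(|\nabla w|^2 - |w|^2\bigr)\,dx\,dy$ (using $|\nabla w|^2 = w_r^2 + r^{-2}w_\theta^2$ and $r\,dr\,d\theta = dx\,dy$), and since convexity and "minimum at $\beta=0$" are preserved under adding a convex function to a linear one, conclusions \ref{prop-A-8-i}, \ref{prop-A-8-ii} hold for this sum. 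The main obstacle is genuinely just the nonnegativity of the bulk term, i.e.\ making the Hardy-inequality step clean — including the justification that $u(0) = 0$, which rests on the regularity of $w$ at the vertex established earlier; everything else is bookkeeping that parallels Proposition~\ref{prop-A-8} exactly.
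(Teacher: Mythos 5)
Your argument is essentially the paper's own: the "proof" of this corollary is precisely the text preceding it --- run the scheme of Proposition~\ref{prop-A-8} on $v=r\partial_r w$ using $Lv=-2w$, absorb the resulting extra term $-\iint w^2\,r\,dr\,d\theta$ into the bulk integrand $(rw_r)_r^2+w_{r\theta}^2+(rw_r)^2-w^2$, prove nonnegativity of the $(rw_r)^2-w^2$ piece via the substitution $w=r^{-3/2}u$ and Hardy's inequality, and then combine with the linearity in $\beta_2-\beta_1$ from Proposition~\ref{prop-A-7}\ref{prop-A-7-ii}; your added justifications ($u(0)=0$ from the regularity of $w$ at the vertex, the parity of $rw_r$) are correct refinements. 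One small slip: the cross term satisfies $-3\int r^{-1}u_r u\,dr = -\tfrac{3}{2}\int r^{-2}u^2\,dr$ (not $+\tfrac{3}{2}$), which is exactly what produces your (correctly stated) final form $\int\bigl(u_r^2-\tfrac{1}{4}r^{-2}u^2\bigr)\,dr\ge 0$.
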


Now we can prove
\begin{proposition}\label{cor-A-11}
Let $\alpha \in (\pi,2\pi]$. Then for both symmetric and antisymmetric $w$ \textup{(\ref{A-8})} holds.
\end{proposition}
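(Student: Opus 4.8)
The plan is to deduce \eqref{A-8} from the symmetry/convexity machinery assembled in Propositions~\ref{prop-A-7}, \ref{prop-A-8} and Corollaries~\ref{cor-A-9}, \ref{cor-A-10}. The key observation is that Corollary~\ref{cor-A-10} gives, for $w$ symmetric or antisymmetric, that
\begin{equation*}
\cF(\beta_1,\beta_2)\coloneqq \iint_{\cX_{\beta_1,\beta_2}} \bigl(|\nabla w|^2-|w|^2\bigr)\,dx_1dx_2
\end{equation*}
is, with $\beta_{1,2}=\beta\mp\sigma$, a convex function of $\beta$ attaining its minimum at $\beta=0$. Hence for the symmetric sector $\cX_{-\alpha/2,\alpha/2}=X$ the quantity $\cF(-\alpha/2,\alpha/2)$ is \emph{no smaller} than the value over any sub-sector of the same angular width placed symmetrically — but what I actually want is a lower bound, so I should instead exploit that $\cF$ over a symmetric sector is monotone in its half-width. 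More precisely: since $\beta\mapsto\cF(\beta-\sigma,\beta+\sigma)$ is convex with minimum at $\beta=0$, and by Proposition~\ref{prop-A-7}\ref{prop-A-7-ii} the difference $\cF(\beta_1,\beta_2)$ of the "balanced" part depends only on $\beta_2-\beta_1$, I can write $\cF(-\alpha/2,\alpha/2)$ as an integral in $\sigma$ of $\tfrac{d}{d\sigma}\cF(-\sigma,\sigma)$ and show each increment is nonnegative.

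First I would record that it suffices to treat $w$ real (footnote~\ref{foot-A-1}) and either symmetric or antisymmetric, and that by Proposition~\ref{prop-A-2} the statement is already known for antisymmetric $w$ with $\alpha\le\pi$ and for symmetric $w$ with $\alpha\ge\pi$; so the genuinely new content is antisymmetric $w$ with $\alpha\in(\pi,2\pi]$, and symmetric $w$ is handled the same way (or directly from Proposition~\ref{prop-A-2}). Next I would apply Corollary~\ref{cor-A-10} together with Corollary~\ref{cor-A-9}: both tell us that $\cF(\beta-\sigma,\beta+\sigma)$ is convex in $\beta$ with minimum at $\beta=0$. Evaluating at $\beta=0$, $\sigma=\alpha/2$ gives $\cF$ over all of $X$; I then differentiate in $\sigma$ and use Proposition~\ref{prop-A-7}\ref{prop-A-7-i} (the flux $\cI(\beta)$ is $\beta$-independent, equal to $\cI(0)$) to identify $\tfrac{d}{d\sigma}\cF(-\sigma,\sigma)=2\cI(\sigma)=2\cI(0)$. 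The sign of $\cI(0)$ is controlled on the bisector $\cY_0$: for antisymmetric $w$ one has $w|_{Y_0}=0$, so on $\cY_0$, $\cI(0)=\int_{\cY_0}(r^{-2}w_\theta^2-w_r^2-w^2)\,rdr$, and I must argue this is $\ge 0$.

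For that last sign, I would use formula \eqref{A-10}: for antisymmetric $w$,
\begin{equation*}
\int_{Y_2}\bigl(w_\nu^2-w_r^2-w^2\bigr)\,r\,dr + \int_{Y_0} w_{x_2}^2\,x_1\,dx_1=0,
\end{equation*}
and by Corollary~\ref{cor-A-5}\ref{prop-A-5-ii} (applied to the sub-angle, i.e. $\Lambda(\alpha)\ge(D_s^2+I)^{1/2}$ on antisymmetric functions for all $\alpha\in(0,2\pi]$) one gets $\int_{Y_2}(w_\nu^2-w^2)\,rdr\ge 0$ after the weighted integration-by-$t$ trick of Proposition~\ref{prop-A-2}; combining these shows the boundary flux through $\cY_0$ has the right sign, hence $\cI(0)\ge 0$ and therefore $\sigma\mapsto\cF(-\sigma,\sigma)$ is nondecreasing, starting from the value $0$ at $\sigma\to 0$ (where the sector degenerates), so $\cF(-\alpha/2,\alpha/2)\ge 0$, which is \eqref{A-8}. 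The symmetric case is identical using \eqref{A-11} and Corollary~\ref{cor-A-5}\ref{cor-A-5-i}.

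\medskip
\textbf{Main obstacle.} The delicate point is the sign of the flux $\cI(0)$ through the bisector, i.e. showing $\int_{\cY_0}(r^{-2}w_\theta^2-w_r^2-w^2)\,rdr\ge 0$ for antisymmetric $w$ with $\alpha\in(\pi,2\pi]$. This does \emph{not} follow from pointwise positivity; it requires combining the conserved-flux identity of Proposition~\ref{prop-A-7}\ref{prop-A-7-i}, the weighted identity \eqref{A-10}, and the operator inequality of Corollary~\ref{cor-A-5}\ref{prop-A-5-ii} on the half-angle — and one must be careful that the convexity/minimum-at-$\beta=0$ statements from Corollaries~\ref{cor-A-9} and \ref{cor-A-10} are genuinely available for the relevant sub-sectors and that the boundary terms at $r=0$ and $r=\infty$ vanish (decay of $w$, the factor $r$, and $u(0)=0$ as used in Corollary~\ref{cor-A-10}). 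Once the sign of $\cI(0)$ is in hand the rest is a one-line integration in $\sigma$.
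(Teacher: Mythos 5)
Your route is genuinely different from the paper's. The paper argues by contradiction via a reflection into half-planes: if $\iint_X(|\nabla w|^2-w^2)\,dx<0$, then Corollary~\ref{cor-A-10} (convexity in $\beta$ plus minimum at $\beta=0$) forces the same inequality on the smaller symmetric sector $\cX_{-(2\pi-\alpha)/2,\,(2\pi-\alpha)/2}$, and the sum of the two expressions equals, by inclusion--exclusion, the sum of the corresponding expressions over the two half-planes $\cX_{\alpha/2-\pi,\,\alpha/2}$ and $\cX_{-\alpha/2,\,\pi-\alpha/2}$, for which (\ref{A-8}) is known; this treats symmetric and antisymmetric $w$ uniformly. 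You instead integrate the angular density of $|\nabla w|^2-|w|^2$ over $\theta$ and reduce everything to the sign of the conserved flux $\cI(0)$ through the bisector. Done correctly, this yields the antisymmetric case for \emph{all} $\alpha\in(0,2\pi]$ with no half-plane input and, in fact, with no use of the convexity machinery at all --- a genuine simplification for that case.

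Three points in your execution need repair. First, the identity $\tfrac{d}{d\sigma}\iint_{\cX_{-\sigma,\sigma}}(|\nabla w|^2-|w|^2)\,dx=2\cI(\sigma)$ is false: the angular density of this integral is $\phi(\theta)\coloneqq\int_{\cY_\theta}(w_r^2+r^{-2}w_\theta^2-w^2)\,r\,dr=\cI(\theta)+2\int_{\cY_\theta}w_r^2\,r\,dr$, not $\cI(\theta)$; fortunately the discrepancy is nonnegative, so $\phi(\theta)\ge\cI(0)$ by Proposition~\ref{prop-A-7}\ref{prop-A-7-i} and your conclusion survives. Second, your ``main obstacle'' is not an obstacle: for antisymmetric $w$ one has $w\equiv0$ on $Y_0$, hence $w_r=w=0$ there and $\cI(0)=\int_{\cY_0}r^{-1}w_\theta^2\,dr\ge0$ \emph{pointwise}. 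The detour through (\ref{A-10}), Corollary~\ref{cor-A-5}\ref{prop-A-5-ii} and the ``integration-by-$t$ trick'' is unnecessary and, as sketched, does not cohere (indeed, combining (\ref{A-10}) as printed with the constancy of $\cI$ would produce the opposite sign for $\cI(0)$, which points to a sign slip in (\ref{A-10}) rather than a real difficulty). Third, the symmetric case is \emph{not} ``identical'': there $w_\theta|_{Y_0}=0$, so $\cI(0)=-\int_{Y_0}(w_{x_1}^2+w^2)\,x_1\,dx_1\le0$ and your monotonicity in $\sigma$ breaks down. For symmetric $w$ with $\alpha\in[\pi,2\pi]$ you must fall back on Proposition~\ref{prop-A-2} (which you do offer as an alternative, and whose proof extends to $\alpha=2\pi$); with that substitution your argument is complete.
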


\begin{proof}
Indeed, assume that it is not the case: $\iint_X (|\nabla w|^2-w^2)\,dxdy <0$ for some $w$. Then due to Corollary~\ref{cor-A-10} the same is true for $X$ replaced by $\cX_{\beta_1,\beta_2}$ with
$\beta_{1,2}=\mp (2\pi -\alpha)/2$ and the same $w$. Then it is true for the sum of these to expressions (with $X==X_{-\alpha/2,\alpha/2}$ and $\cX_{\beta_1,\beta_2}$), which is the sum of the same expressions for the half-planes $X_{\alpha/2-\pi,\alpha/2}$ and $X_{-\alpha/2,\pi-\alpha/2}$. However, for half-planes (\ref{A-8}) holds.
\end{proof}

Let $P_\tau=\uptheta(\tau-\Lambda)$.

\begin{proposition}\label{prop-A-12}
\begin{enumerate}[label=(\roman*), wide, labelindent=0pt]
\item\label{prop-A-12-i}
Let $\alpha \in [\pi,2\pi]$. Then  for any $\tau>1$ for  $w=Jv$, $v\in \Ran (I-P_\tau)$,
\begin{equation}
\|\nabla w\|^2\ge (1+\delta)\|w\|^2
\label{A-24}
\end{equation}
with $\delta=\delta(\tau)>0$.

\item\label{prop-A-12-ii}
Let $\alpha \in (0,\pi]$. Then for any  $\tau>1$ for antisymmetric $w=Jv$, $v\in \Ran (I-P_\tau)$,
\textup{(\ref{A-24})} holds.
\end{enumerate}
\end{proposition}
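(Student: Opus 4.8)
The plan is a contradiction argument whose inputs are the identity (\ref{A-13}), the inequality (\ref{A-8}) together with its strictness for $w\ne0$, and the operator inequality $\Lambda\ge(D_s^2+I)^{1/2}$ of Corollary~\ref{cor-A-5}. The case $\alpha=\pi$ is immediate: then $\Lambda=(D_s^2+I)^{1/2}$, so by (\ref{A-13}) and Plancherel $\|\nabla w\|^2=\int(1+2\xi^2)\,d\mu$ and $\|w\|^2=\int d\mu$ with $d\mu=(4\pi)^{-1}(1+\xi^2)^{-1/2}|\widehat v|^2\,d\xi\ge0$, while $v\in\Ran(I-P_\tau)$ forces $\widehat v$ supported in $\{\xi^2\ge\tau^2-1\}$, whence $\|\nabla w\|^2\ge(2\tau^2-1)\|w\|^2$ with $2\tau^2-1>1$. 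So from now on $\alpha\ne\pi$, and $w$ is antisymmetric in case \ref{prop-A-12-ii} (so that (\ref{A-8}), Corollary~\ref{cor-A-5} and the strictness of (\ref{A-8}) all apply, via Proposition~\ref{prop-A-2}); in case \ref{prop-A-12-i} one splits $w=Jv$ into its symmetric and antisymmetric parts to invoke Corollary~\ref{cor-A-11}.

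Suppose (\ref{A-24}) fails for every $\delta>0$: there is a sequence $v_n\in\Ran(I-P_\tau)$ (antisymmetric in case \ref{prop-A-12-ii}), $w_n=Jv_n$, normalised by $\|w_n\|_{\sL^2(X)}=1$, with $\|\nabla w_n\|_{\sL^2(X)}^2\to1$. By (\ref{A-13}), $\langle\Lambda v_n,v_n\rangle_Y=\|\nabla w_n\|^2+\|w_n\|^2\to2$; since $v_n\in\Ran(I-P_\tau)$ we get $\tau\|v_n\|_Y^2\le\langle\Lambda v_n,v_n\rangle_Y$, so $\|v_n\|_Y$ is bounded, and by Corollary~\ref{cor-A-5} so is $\|v_n\|_{\sH^{1/2}(Y)}$; also $w_n$ is bounded in $\sH^1(X)$. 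Along a subsequence $w_n\rightharpoonup w$ in $\sH^1(X)$, $v_n\rightharpoonup v$ in $\sL^2(Y)$, $w=Jv$, and $v$ still lies in $\Ran(I-P_\tau)$ (and is antisymmetric in case \ref{prop-A-12-ii}), that set being a weakly closed subspace. Since solutions of (\ref{A-1}) decay exponentially away from $Y$ and $\sH^1\hookrightarrow\sL^2$ is compact on bounded subdomains of $X$ up to the vertex, no $\sL^2(X)$–mass can escape into the interior of $X$ or concentrate at the vertex; so either the mass of $w_n$ stays tight, or a fixed fraction $\theta>0$ escapes to infinity along $Y_1\cup Y_2$.

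If the mass is tight, $w_n\to w$ strongly in $\sL^2(X)$, so $\|w\|_{\sL^2(X)}=1$; weak lower semicontinuity gives $\|\nabla w\|^2\le1$, and together with (\ref{A-8}) this forces $\|\nabla w\|^2=\|w\|^2$ for $w\ne0$, contradicting the strictness of (\ref{A-8}) (established in the proof of Proposition~\ref{prop-A-2}; for Corollary~\ref{cor-A-11} it follows from the same reduction to half–planes, where $\|\nabla w\|^2=(1+2\xi^2)\|w\|^2$ in Fourier). If instead a fraction $\theta>0$ escapes along, say, $Y_1$, cut $w_n=\chi_Rw_n+(1-\chi_R)w_n$ with the transition placed in a low–mass shell (pigeonholing in $R$), $\chi_R$ symmetric. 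The inner piece obeys $\|\nabla(\chi_Rw_n)\|^2\ge\|\chi_Rw_n\|^2-o(1)$, by applying (\ref{A-8}) to $J((\chi_R|_Y)v_n)$, which approximates $\chi_Rw_n$ in $\sH^1(X)$ up to $o(1)$ since $(\Delta+I)(\chi_Rw_n)=[\Delta,\chi_R]w_n$ is small; the outer piece is, up to $o(1)$ in $\sH^1(X)$, a solution of (\ref{A-1}) in a half–plane whose boundary datum — by the argument below — is spectrally localised to $\{(D_s^2+I)^{1/2}\ge\tau-\epsilon\}$, hence obeys $\|\nabla(\cdot)\|^2\ge(2(\tau-\epsilon)^2-1)\|\cdot\|^2$ by the $\alpha=\pi$ computation. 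Adding, $\|\nabla w_n\|^2\ge\|w_n\|^2+(2(\tau-\epsilon)^2-2)\theta-o(1)$, which for $\epsilon$ small tends to a limit $>1$, contradicting $\|\nabla w_n\|^2\to1$.

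The one genuinely delicate point — the main obstacle — is the italicised claim in the escape alternative: transferring the \emph{global} spectral constraint $v_n\in\Ran(I-P_\tau)$ for $\Lambda=\Lambda(\alpha)$ to the corresponding constraint for $(D_s^2+I)^{1/2}$ on the half–plane model. The plan is to use smooth spectral cutoffs: for $\psi$ supported in $[0,\tau)$ one has $\psi(\Lambda(\alpha))v_n=0$; the commutator of $\psi\bigl((D_s^2+I)^{1/2}\bigr)$ with the slowly varying factor $1-\chi_R$ is $O(1/R)$; and away from $Z$ the difference $\Lambda(\alpha)-(D_s^2+I)^{1/2}$ is negligible by Proposition~\ref{prop-2-3} and Remark~\ref{rem-2-8}. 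Hence $(1-\chi_R)v_n$ is, modulo errors that vanish in the limit, annihilated by every such $\psi\bigl((D_s^2+I)^{1/2}\bigr)$, i.e. its Fourier transform is essentially supported in $\{\xi^2\ge\tau^2-1\}$ up to arbitrarily small loss. Everything else — the a priori bounds, the Rellich compactness, the concentration–compactness bookkeeping, and the use of strictness — is routine.
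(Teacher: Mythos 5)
Your proposal is correct and follows essentially the same route as the paper: a contradiction/compactness argument in which a near-minimizing sequence either converges (and then the strict form of (\ref{A-8}) for $w\ne 0$ is violated in the limit) or loses mass at infinity along $Y$, which is excluded by the spectral constraint $v\in\Ran(I-P_\tau)$ transferred to the half-plane model far from the vertex. The paper compresses the second alternative into the compactness claim (\ref{A-25}), justified by ``semiclassical arguments with $h=R^{-1}$ after scaling''; your cutoff-plus-commutator treatment of the escaping mass is a legitimate, more explicit rendering of exactly that step.
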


\begin{proof}
Observe first that 
\begin{claim}\label{A-25}
$\{v\in \Ran(I-P_\tau) \colon \|v\|_Y=1,\,  \colon \|\nabla Jv\|^2\le (1+\delta')\|Jv\|^2 \}$ is a compact set in $\sL^2(Y)$ for $\delta'=\delta'(\tau)>0$.
\end{claim}
Indeed, in the zone $\{x\colon |x|\ge R\}$ we can apply semiclassical arguments with $h\coloneqq R^{-1}$ after scaling $x\mapsto R^{-1}x$.

Since in both cases (\ref{A-8}) holds with a strict inequality for $w\ne 0$), we arrive to both Statements~\ref{prop-A-12-i} and \ref{prop-A-12-ii}. 
\end{proof}

\section{Spectrum}
\label{sect-A-2}

The above results are sufficient for our needs, for $\alpha \in (\pi,2\pi]$. However we would like to explore the case of $\alpha \in (0,\pi)$ and even $\alpha \in (\pi,2\pi]$ in more depth.

\begin{corollary}\label{cor-A-13}
\begin{enumerate}[label=(\roman*), wide, labelindent=0pt]
\item\label{cor-A-13-i}
Let $\alpha \in [\pi,2\pi]$. Then  $\Spec(\Lambda)=[1,\infty)$ and it is continuous.

\item\label{cor-A-13-ii}
Let $\alpha \in (0,\pi]$. Then  $\Spec(\Lambda_\asym)=[1,\infty)$ and it is continuous, where $\Lambda_\sym$ and $\Lambda_\asym$ denote the restriction of $\Lambda$ to the spaces of symmetric and antisymmetric functions, correspondingly.
\end{enumerate}
\end{corollary}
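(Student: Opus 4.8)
The plan is to combine the monotonicity results (Propositions~\ref{prop-A-3}, \ref{prop-A-4} and Corollary~\ref{cor-A-5}) with an explicit construction of Weyl sequences. First I would settle the lower bound for the spectrum: by Corollary~\ref{cor-A-5}\ref{cor-A-5-i}, on symmetric functions $\Lambda(\alpha)\ge (D_s^2+I)^{1/2}$ for $\alpha\in[\pi,2\pi]$, and on antisymmetric functions (Corollary~\ref{cor-A-5}\ref{prop-A-5-ii}) $\Lambda(\alpha)\ge (D_s^2+I)^{1/2}$ for $\alpha\in(0,2\pi]$; since $\Spec\bigl((D_s^2+I)^{1/2}\bigr)=[1,\infty)$, this gives $\Spec(\Lambda)\subset[1,\infty)$ in case~\ref{cor-A-13-i} and $\Spec(\Lambda_\asym)\subset[1,\infty)$ in case~\ref{cor-A-13-ii}. (The operators are nonnegative and $\ge 1$ on the relevant subspace, so $1$ is the bottom.)

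Next I would show $[1,\infty)\subset\Spec$. Fix $\tau\ge 1$. The idea is to take, for the model half-plane $\alpha=\pi$, a Weyl sequence for $(D_s^2+I)^{1/2}$ at the point $\tau$: harmonic functions $w$ on $\{x_1>0\}$ of the form $w(x_1,x_2)=e^{-x_1\sqrt{\zeta^2+1}}\,e^{ix_2\zeta}$ with $\zeta^2+1=\tau^2$, suitably truncated and localised in $x_2$ to get an approximate eigenfunction $v=w|_Y$ of $\Lambda(\pi)$ with eigenvalue near $\tau$ and with $\|v\|_Y=1$. These are symmetric (even in the normal variable in an appropriate sense) or can be arranged to be, hence they also serve for $\Lambda_\sym$; for the antisymmetric case one uses the odd analogue. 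To transfer to general $\alpha$ I would invoke the monotonicity of $\Lambda(\alpha)$ in $\alpha$ on each parity class (Proposition~\ref{prop-A-3}), together with strong-resolvent continuity in $\alpha$, or—more directly—construct the Weyl sequence already inside the angle $\cX(\alpha)$: for $\alpha>\pi$, place the support of the truncated plane-wave packet deep inside one leaf, far from the vertex, where $\Lambda(\alpha)$ acts (up to negligible error) like the half-plane operator, so the same $v$ works. This yields a singular sequence, hence $\tau\in\Spec$.

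Finally, continuity (absence of eigenvalues, i.e. of point spectrum) is where the real work lies, and this is the step I expect to be the main obstacle. The natural route is: if $\Lambda v=\tau v$ with $v\in\sL^2(Y)$, $v\ne 0$, then $w=Jv$ satisfies $\|\nabla w\|^2=\tau^2\|w\|^2$ by (\ref{A-13}) (rescaled: $(\Lambda v,v)_Y=\|\nabla w\|^2_{g}+\dots$ in the frequency-one normalisation), while the integrated identities of Propositions~\ref{prop-A-7} and \ref{prop-A-8} and Corollaries~\ref{cor-A-9}–\ref{cor-A-10} force $\iint_{\cX_{\beta_1,\beta_2}}(|\nabla w|^2-|w|^2)\,dx$ to be a fixed multiple of $\beta_2-\beta_1$ and, after the convexity/symmetry analysis, to have a definite sign; combined with the strict version of (\ref{A-8}) (Remark~\ref{rem-A-6}) and the compactness statement (\ref{A-25}) one shows a genuine $\sL^2$ eigenfunction cannot exist. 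Concretely I would run the argument of Proposition~\ref{cor-A-11} with a strict inequality: an $\sL^2$-eigenfunction at $\tau\ge 1$ would have to saturate (\ref{A-8}) on every sub-angle, which the strict monotonicity and the half-plane decomposition used in the proof of Proposition~\ref{cor-A-11} rule out. The delicate point is handling the behaviour near the vertex $r\to 0$ and at infinity $r\to\infty$ simultaneously, so that the boundary terms $\cI(\beta)$, $\cI'(\beta)$ in (\ref{A-20})–(\ref{A-21}) are finite and the integrations by parts are justified for a mere $\sL^2(Y)$ eigenfunction; the decay estimates implicit in (\ref{A-25}) (semiclassical arguments after $x\mapsto R^{-1}x$ for $|x|\gtrsim R$) supply the behaviour at infinity, and the explicit singularity structure $r^{\pi n/\alpha}\sin(\pi n\theta/\alpha)$ recalled in Section~\ref{sect-2-1} controls $r\to 0$, but stitching these together cleanly is the crux of the proof.
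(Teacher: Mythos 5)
Your overall architecture coincides with the paper's: the inclusion $\Spec(\Lambda)\subset[1,\infty)$ (resp.\ $\Spec(\Lambda_\asym)\subset[1,\infty)$) from Corollary~\ref{cor-A-5}, Weyl sequences supported far from the vertex to get $[1,\infty)\subset\Spec_{\mathrm{ess}}$, and elimination of eigenvalues via the strict form of (\ref{A-8}). The paper compresses all of this into three sentences: the essential spectrum is taken as known, $\Lambda>I$ (resp.\ $\Lambda_\asym>I$) excludes an eigenvalue at $1$, and eigenvalues in $(1,\infty)$ are excluded by the positive-commutator (virial/Mourre) estimate $(i[\Lambda,Q]v,v)\ge\delta\|v\|^2$ on $\Ran(I-P_\tau)$, which is exactly the content of Proposition~\ref{prop-A-12}.

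Where your proposal has a genuine gap is in the mechanism by which an $\sL^2$ eigenfunction is shown to saturate (\ref{A-8}). The identity (\ref{A-13}) gives $(\Lambda v,v)_Y=\|\nabla w\|^2+\|w\|^2$, not $\|\nabla w\|^2=\tau^2\|w\|^2$, and the convexity statements of Propositions~\ref{prop-A-7}--\ref{prop-A-8} and Corollaries~\ref{cor-A-9}--\ref{cor-A-10} do not by themselves force equality in (\ref{A-8}) for an eigenfunction. The missing ingredient is the commutator identity (\ref{eqn-3-2}) applied to the dilation generator $Q=x_1D_1+x_2D_2\pm i/2$: for an eigenfunction the virial theorem gives $([Q,\Lambda]v,v)=0$, while the left-hand side of (\ref{eqn-3-2}) evaluates, in the reduced two-dimensional problem, to a multiple of $\|\nabla w\|^2-\|w\|^2$ --- this is Proposition~\ref{prop-A-16}, and its quantitative counterpart on $\Ran(I-P_\tau)$ is estimate (\ref{A-24}) of Proposition~\ref{prop-A-12}. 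Once you invoke Proposition~\ref{prop-A-12} (whose proof already handles the $r\to 0$ and $r\to\infty$ issues you worry about, via the compactness claim (\ref{A-25}) together with strictness of (\ref{A-8})), the continuity step closes immediately; without the virial identity, the deduction ``eigenfunction $\Rightarrow$ equality in (\ref{A-8})'' as you have written it does not go through.
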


\begin{proof}
We already know that the that essential spectrum of $\Lambda $ is $[1,\infty)$.
We also know that in the case~\ref{cor-A-13-i} $\Lambda > I$ and in the case~\ref{cor-A-13-ii} $\Lambda_\asym >  I$.
Therefore $1$ is not an eigenvalue. Continuity of the spectrum follows from 
$(i[\Lambda,Q]v,v) \ge \delta \|v\|^2$ for $v\in \Ran (I-P_\tau)$, $v$ is antisymmetric in the case \ref{cor-A-13-ii}, which is due to Statements~\ref{prop-A-12-i} and \ref{prop-A-12-ii} of Propostion~\ref{prop-A-12}. 
\end{proof}

\begin{remark}\label{rem-A-14}
Paper~\cite{KP}  is dealing mainly with the eigenvalues of $\Delta_2$ in the planar sector under Robin boundary condition $(\partial_\nu +\gamma)w|_Y=0$, $\gamma>0$\,\footnote{In that paper $\alpha $ is a half-angle, and $\nu$ is a unit external normal. Below we refer to this paper using our notations.}.  Then eigenvalues $\tau$ of $\Lambda$ and eigenvalues $\mu $ of that problem are related through Birman-Schwinger principle and scaling:
$\mu_k = -\tau_k^{-2}\gamma^2$. Some of the results:
\begin{enumerate}[label=(\roman*), wide, labelindent=0pt]
\item\label{rem-A-14-i}
Theorem~3.1  states that $(-\infty, -\gamma^2)$ contains only discrete spectrum of such operator and it is finite. 

\item\label{rem-A-14-ii}
Theorem~2.3 states that for $\alpha \in (0,\pi)$   the bottom eigenvalue $-\gamma^2/\sin^2(\alpha/2)$ is simple and the corresponding eigenfunction is 
$\exp (-\gamma x_1/\sin(\alpha/2)$. 

\item\label{rem-A-14-iii}
Theorem~3.6 states that for $\alpha \in [\frac{\pi}{3},\pi)$ there is no other eigenvalues in $(0,1)$, while Theorem 4.1 implies that the number of such eigenvalues is $\asymp \alpha^{-2}$ as $\alpha\to 0$\,\footnote{\label{foot-yz} In fact, the compete asymptotic expansion of the eigenvalues is derived in Theorem~4.16 of \cite{KP}.}.
\end{enumerate}
\end{remark}

Then we conclude  that

\begin{corollary}\label{cor-A-15}
\begin{enumerate}[label=(\roman*), wide, labelindent=0pt]
\item\label{cor-A-15-i}
Interval $(0,1)$ contains only discrete spectrum of $\Lambda_\sym$ which is finite.

\item\label{cor-A-15-ii}
For $\alpha \in (0, \pi)$ the bottom eigenvalues is $\sin(\alpha/2)$ and the corresponding eigenfunction is 
$\exp (-x_1)$.
\end{enumerate}
\end{corollary}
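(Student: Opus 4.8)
The plan is to read both statements off the facts about \cite{KP} collected in Remark~\ref{rem-A-14}, transported through the Birman--Schwinger/scaling correspondence $\mu_k=-\tau_k^{-2}\gamma^2$ recorded there between the eigenvalues $\tau_k$ of $\Lambda$ and the eigenvalues $\mu_k$ of the Robin Laplacian in the same planar angle.

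For Statement~\ref{cor-A-15-i}: under that correspondence $\tau_k\in(0,1)$ (equivalently $\tau_k^{-2}>1$) is matched exactly with $\mu_k<-\gamma^2$. By Theorem~3.1 of \cite{KP} (Remark~\ref{rem-A-14}\ref{rem-A-14-i}) the half-line $(-\infty,-\gamma^2)$ contains only finitely many eigenvalues of that Robin problem, so $\Lambda$ has only finitely many eigenvalues in $(0,1)$; since the essential spectrum of $\Lambda$ is $[1,\infty)$, its spectrum in $(0,1)$ is purely discrete. The same then holds a fortiori for the restriction $\Lambda_\sym$, whose eigenvalues in $(0,1)$ are among those of $\Lambda$ and whose essential spectrum lies in $[1,\infty)$. (By Corollary~\ref{cor-A-13}\ref{cor-A-13-ii} the antisymmetric part contributes nothing below $1$ anyway, so these finitely many eigenvalues are all symmetric.)

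For Statement~\ref{cor-A-15-ii}: I would first verify directly that $w=e^{-x_1}$ is an eigenfunction of $\Lambda$. It solves $(\Delta+1)w=0$, it is symmetric, and it lies in the energy space on $X$ --- for $\alpha\in(0,\pi)$ the angle sits inside $\{x_1\ge0\}$ with cross-sections of length $2x_1\tan(\alpha/2)$, so $e^{-x_1}\in\sH^1(X)$ --- and, computing with the inner unit normal $\nu=(\sin(\alpha/2),\cos(\alpha/2))$ along $Y_1$ (its mirror along $Y_2$), one gets $\Lambda(w|_Y)=-\partial_\nu w|_Y=\sin(\alpha/2)\,(w|_Y)$, with $\sin(\alpha/2)\in(0,1)$ for $\alpha\in(0,\pi)$. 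That $\sin(\alpha/2)$ is the \emph{bottom} eigenvalue of $\Lambda_\sym$, and that it is simple, I would then obtain from Remark~\ref{rem-A-14}\ref{rem-A-14-ii} (Theorem~2.3 of \cite{KP}): under $\mu_k=-\tau_k^{-2}\gamma^2$ the simple lowest Robin eigenvalue $\mu_1=-\gamma^2/\sin^2(\alpha/2)$ corresponds precisely to $\tau_1=\sin(\alpha/2)$ (and, consistently, for $\gamma=\sin(\alpha/2)$ the Robin eigenfunction $\exp(-\gamma x_1/\sin(\alpha/2))$ is exactly $e^{-x_1}$).

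No serious analytic difficulty arises here; the only delicate point is the bookkeeping in the Birman--Schwinger/scaling dictionary --- the normal-orientation convention of \cite{KP} relative to ours and the frequency normalization built into \textup{(\ref{A-1})} --- but once the relation $\mu_k=-\tau_k^{-2}\gamma^2$ is taken as in Remark~\ref{rem-A-14}, and the direct computation of $\Lambda e^{-x_1}$ above is carried out, everything follows immediately.
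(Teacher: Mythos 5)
Your proposal is correct and follows essentially the same route as the paper: Corollary~\ref{cor-A-15} is stated there as an immediate consequence of the facts from \cite{KP} collected in Remark~\ref{rem-A-14}, transported through the Birman--Schwinger/scaling dictionary $\mu_k=-\tau_k^{-2}\gamma^2$, exactly as you do. Your added direct verification that $e^{-x_1}$ is an eigenfunction of $\Lambda$ with eigenvalue $\sin(\alpha/2)$ is a harmless (and welcome) supplement, not a departure.
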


The discrete spectrum  would not prevent us from the extending our main results to $\alpha\in (0,\pi)$. Even (possible) eigenvalue $1$ on the edge of the essential spectrum would not be an obstacle.
However eigenvalues embedded into $(1,\infty)$ are an  obstacle (see Proposition~\ref{prop-A-16}).

\begin{proposition}\label{prop-A-16}
If $w_p=Jv_p$ where $v_p$ are eigenfunctions of $\Lambda$, corresponding to eigenvalues $\tau_p$, and $\tau_j=\tau_k$, then
\begin{equation}
(\nabla w_j,\nabla w_k) -(w_j,w_k)=0. 
\label{A-26}
\end{equation}
In particular, 
\begin{equation}
\|\nabla w_j,\|^2 -\|w_j\|^2=0. 
\label{A-27}
\end{equation}
\end{proposition}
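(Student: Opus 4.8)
The plan is to obtain \textup{(\ref{A-26})} from two integration-by-parts identities for the pair $w_j=Jv_j$, $w_k=Jv_k$ --- both solving $(\Delta+1)w=0$ in the planar angle $X\subset\bR^2$ --- together with the eigenfunction relation $\partial_\nu w_p|_Y=-\Lambda v_p=-\tau_p v_p$ on the two boundary rays $Y=Y_1\cup Y_2$. Write $\tau$ for the common eigenvalue $\tau_j=\tau_k$; this equality is used only at the last step.

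First, \textup{(\ref{A-13})} applied to $v=v_k$, $v'=v_j$ together with $\Lambda v_k=\tau v_k$ gives the ``Green'' relation $(\nabla w_j,\nabla w_k)+(w_j,w_k)=(\Lambda v_k,v_j)_Y=\tau(v_j,v_k)_Y$. Hence it suffices to establish the companion identity $2(w_j,w_k)=\tau(v_j,v_k)_Y$: subtracting it from the Green relation yields $(\nabla w_j,\nabla w_k)-(w_j,w_k)=0$, and $j=k$ is \textup{(\ref{A-27})}.

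For the companion identity I would use a Rellich--Pohozaev identity adapted to the cone $X$: test $(\Delta+1)w_k=0$ against the dilation field $x\cdot\nabla w_j$ (with $x\in\bR^2$), symmetrize in $j,k$, and integrate over $X$. Two features of the geometry simplify the outcome decisively --- the ambient dimension is $2$, so the bulk term $\iint_X\nabla w_j\cdot\nabla w_k$ cancels, while $x$ is tangent to the rays of $Y$, so $x\cdot\nu\equiv0$ there and the boundary term $\int_Y(x\cdot\nu)\,\nabla w_j\cdot\nabla w_k\,dr$ is absent. What remains is $\int_Y r\bigl(\partial_r w_j\,\partial_\nu w_k+\partial_r w_k\,\partial_\nu w_j\bigr)\,dr=2(w_j,w_k)$, with $r=|x|$ and $\partial_r$ the outward derivative along the rays. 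Plugging in $\partial_\nu w_p|_Y=-\tau w_p|_Y$, the left side becomes $-\tau\int_Y r\,\partial_r(w_jw_k)\,dr$, and a single integration by parts in $r$ along each ray turns this into $\tau\int_Y w_jw_k\,dr=\tau(v_j,v_k)_Y$, provided the endpoint terms $\bigl[r\,w_jw_k\bigr]$ vanish at the vertex and at infinity.

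The one delicate point is the rigorous justification of this Pohozaev computation and of the $r$-integration by parts, since a priori $w_p$ is only known to lie in $\sH^1(X)$ near the vertex while decaying exponentially at infinity (being an $\sL^2$ solution of $(\Delta+1)w=0$). I would therefore do all the integrations on the truncated sector $X\cap\{\varepsilon<|x|<R\}$ and pass to the limit $R\to\infty$ (using the exponential decay) and $\varepsilon\to0$ along a sequence: the arc integral over $\{|x|=\varepsilon\}$ is $O\bigl(\varepsilon\int_{|x|=\varepsilon}|\nabla w_p|^2\,dr\bigr)$, which tends to $0$ along a suitable $\varepsilon_m\to0$ because $\nabla w_p\in\sL^2(X)$; and since the eigenfunctions $v_p$ lie in $\sH^1(Y)$ and hence are bounded, the endpoint terms $[r\,w_jw_k]$ cause no trouble. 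It is also worth remarking that the argument genuinely requires $\tau_j=\tau_k$: for distinct eigenvalues the two boundary pieces produced by the Pohozaev identity carry the different prefactors $\tau_j$ and $\tau_k$ and no longer recombine into the perfect $r$-derivative $\partial_r(w_jw_k)$.
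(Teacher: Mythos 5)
Your proof is correct and is in substance the same as the paper's: the paper obtains \textup{(\ref{A-26})} from the commutator identity \textup{(\ref{eqn-3-2})} with the dilation generator $Q=x_1D_1+x_2D_2+i/2$, using that $([Q,\Lambda]v_j,v_k)=0$ when $\tau_j=\tau_k$, and your Rellich--Pohozaev computation with the multiplier $x\cdot\nabla$, combined with the Green identity \textup{(\ref{A-13})}, is exactly the integrated form of that commutator argument (the point where $\tau_j=\tau_k$ lets the boundary terms recombine into $\partial_r(w_jw_k)$ is precisely the vanishing of the commutator on the common eigenspace). Your explicit treatment of the truncation near the vertex and at infinity is a welcome added detail, but the approach is not a different one.
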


\begin{proof}
It follows from equality (\ref{eqn-3-2}) for $Q=x_1D_1+x_2D_2 +i/2$ and $([Q,\Lambda]v_j,v_k)= (\Lambda v_j,Qv_jk)-(Qv_j,\Lambda v_k)=0$ for eigenfunction $v_j$ $v_k$ provided $\tau_j=\tau_k$.
\end{proof}

To extend the main sharp spectral asymptotics to operators in domains with inner edges one needs to prove the first following

\begin{conjecture}\label{conj-A-17}
For any $\tau>1$ and for any $w=Jv$ with symmetric $v\in \Ran (I-P_\tau)$ estimate \textup{(\ref{A-24})} holds.
\end{conjecture}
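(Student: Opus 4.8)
The plan is to mimic the structure already used for the antisymmetric case in Propositions~\ref{prop-A-2}, \ref{cor-A-11}, and \ref{prop-A-12}, namely to reduce the estimate on solutions $w=Jv$ in the angle $\cX$ to an estimate on half-planes, where it is classical. Recall that the strategy for $\alpha\in(\pi,2\pi]$ was: establish the monotonicity-in-$\beta$ and convexity identities of Corollary~\ref{cor-A-10} for the angular slabs $\cX_{\beta_1,\beta_2}$ applied to the quantity $\iint(|\nabla w|^2-|w|^2)$, and then split the improper angle $\cX_{-\alpha/2,\alpha/2}$ (symmetrically about $Y_0$) into two overlapping half-planes, using that on a half-plane the inequality $\|\nabla w\|^2\ge\|w\|^2$ holds for all solutions of $\Delta_2 w+w=0$. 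For $\alpha\in(0,\pi)$ this last splitting fails in the same direction: the proper angle is an intersection, not a union, of two half-planes, so we cannot simply add half-plane inequalities. So first I would record precisely which identities from Section~\ref{sect-A-1} survive for symmetric $w$ in a proper angle — Corollary~\ref{cor-A-9} and Corollary~\ref{cor-A-10} hold for any $w$ that is symmetric or antisymmetric, so the convexity and symmetry of $\cJ(\beta_1,\beta_2)=\iint_{\cX_{\beta_1,\beta_2}}(|\nabla w|^2-|w|^2)\,dxdy$ in the slab parameter $\beta$ is available for symmetric $w$ regardless of whether $\alpha\lessgtr\pi$.

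With these in hand, the key step is a spectral argument rather than a pure PDE identity, exactly as in Proposition~\ref{prop-A-12}: one works with $v\in\Ran(I-P_\tau)$ for a fixed $\tau>1$, so that one gains a strict spectral gap. First I would observe, via Corollary~\ref{cor-A-15} together with Corollary~\ref{cor-A-13}\ref{cor-A-13-ii}, that the part of $\Spec(\Lambda_\sym)$ below $1$ consists of finitely many eigenvalues, and above $1$ the essential spectrum is $[1,\infty)$; the genuine danger is eigenvalues embedded in $(1,\infty)$, on which Proposition~\ref{prop-A-16} forces the equality $\|\nabla w\|^2=\|w\|^2$. So the conjecture is \emph{equivalent} to the assertion that $\Lambda_\sym$ has no eigenvalues in $(1,\infty)$ (together with a quantitative gap, which, as in Claim~\textup{(\ref{A-25})}, follows from a compactness/semiclassical argument in the region $\{|x|\ge R\}$ once the strict inequality is known on the spectral subspace). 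Thus I would reformulate: prove that for $\alpha\in(0,\pi)$, no symmetric $L^2$-eigenfunction of the Steklov problem \textup{(\ref{A-1})} on $Y$ with eigenvalue $\tau>1$ exists.

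To attack the absence of embedded eigenvalues I would separate variables in polar coordinates as in \cite{KP}: solutions of $(\Delta_2+1)w=0$ that are bounded and symmetric expand in $\cos(k\pi\theta/\alpha)$-type angular modes against modified Bessel / Macdonald functions $K_{k\pi/\alpha}(r)$ and $I_{k\pi/\alpha}(r)$ in $r$; an $L^2(Y)$-eigenfunction of $\Lambda$ corresponds to a harmonic-type extension that decays appropriately, and the Steklov condition $-\partial_\nu w|_Y=\tau w|_Y$ becomes a constraint relating the radial profiles on the two leaves. I would then try to contradict existence either (a) by a Rellich-type / Pohozaev identity — multiplying $(\Delta_2+1)w=0$ by $r\partial_r w$ as in the derivation leading to \textup{(\ref{A-22})}, now on the full angle rather than a slab, and extracting a sign from the boundary terms on $Y_{1,2}$ using $\tau>1$ and the monotonicity of $\Lambda(\alpha)$ from Proposition~\ref{prop-A-3} — or (b) by an explicit oscillation/Wronskian estimate on the one-dimensional radial problem, showing that the Steklov boundary relation cannot be satisfied by an $L^2$ profile when $\tau>1$. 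The main obstacle, and the reason this is left as a conjecture, is precisely step (a)/(b): for $\alpha\in(0,\pi)$ the Pohozaev boundary terms do \emph{not} obviously have a favourable sign (the convexity argument of Corollary~\ref{cor-A-10} shows the slab functional is minimized at the symmetric slab, but for a proper angle that minimizing slab is the whole angle, which gives no comparison with a half-plane), and \cite{KP} shows the low-lying symmetric spectrum is genuinely rich ($\asymp\alpha^{-2}$ eigenvalues below $1$ as $\alpha\to0$), so any argument must use in an essential way the distinction between the interval $(0,1)$ — where eigenvalues are permitted — and $(1,\infty)$ — where they must be excluded — presumably through the behaviour of $I_{k\pi/\alpha}(r)$ versus $K_{k\pi/\alpha}(r)$ at infinity.
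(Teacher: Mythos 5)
This statement is labelled a \emph{conjecture} in the paper, and the paper offers no proof of it; indeed Remark~\ref{rem-A-18}\ref{rem-A-18-ii} states explicitly that it is unknown whether $\Lambda_\sym$ has eigenvalues embedded in $(1,\infty)$ or on its edge for $\alpha\in(0,\pi)$. Your proposal is an accurate diagnosis of the situation rather than a proof: you correctly explain why the splitting of the improper angle into two half-planes (Proposition~\ref{cor-A-11}) has no analogue for a proper angle, you correctly note that Corollaries~\ref{cor-A-9} and \ref{cor-A-10} still apply to symmetric $w$ but yield no useful comparison when the minimizing slab is the whole angle, and you correctly identify, via Proposition~\ref{prop-A-16}, that embedded eigenvalues of $\Lambda_\sym$ are the obstruction. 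But the decisive step --- actually excluding such eigenvalues by a Rellich/Pohozaev identity or by a radial Bessel-function analysis --- is exactly the open problem, and your text concedes that neither route (a) nor (b) is carried out. So there is a genuine gap, and it coincides with the gap the paper itself acknowledges.

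Two smaller points. First, your claimed \emph{equivalence} between the conjecture and the absence of embedded eigenvalues is only proved in one direction: an embedded eigenvalue kills (\ref{A-24}) by Proposition~\ref{prop-A-16}, but conversely, even with no embedded eigenvalues you still need strict inequality $\|\nabla Jv\|^2>\|Jv\|^2$ for \emph{every} nonzero symmetric $v\in\Ran(I-P_\tau)$; the compactness claim (\ref{A-25}) upgrades strict inequality to a uniform gap, but a hypothetical minimizer attaining equality is not automatically an eigenfunction of $\Lambda$, so that implication needs an argument (e.g.\ a variational/Lagrange-multiplier step) that you do not supply. Second, one must also handle the edge point $\tau=1$ of the essential spectrum separately, as the paper does in Corollary~\ref{cor-A-13} and Remark~\ref{rem-A-18}\ref{rem-A-18-ii}. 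As a research plan your reduction is sensible and consistent with the paper's own framing, but it does not constitute a proof of the statement.
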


\begin{remark}\label{rem-A-18}
\begin{enumerate}[label=(\roman*), wide, labelindent=0pt]
\item\label{rem-A-18-i}
Recall that this is true for $\alpha \in [\pi,2\pi]$ and, also, for $\alpha\in (0,\pi)$ and antisymmetric $v$. So, only the case of $\alpha\in (0,\pi)$ and symmetric $v$ needs to be covered.

\item\label{rem-A-18-ii}
So far it is unknown, if in the the case of $\alpha\in (0,\pi)$ $\Lambda_\sym$ has eigenvalues embedded into continuous spectrum $(1,\infty)$ or on its edge.

\item\label{rem-A-18-iii}
Also it is unknown in any case, if the continuous spectrum is absolutely continuous (i.e. that the singular continuous spectrum is empty).
\end{enumerate}
\end{remark}

\end{appendices}

\bibliographystyle{amsplain}

\end{document}